\newcommand{\lanxN}{\langle x \rangle_N}
\newcommand{\ha}{\hat{\sigma}}
\newcommand{\hu}{\hat{\phi}}
\newcommand{\va}{\varphi}
\newcommand{\li}{\|\p_\xi^2 \ha\|_{L^\infty_\xi}}
\newcommand{\lii}{\|\p_\xi^3 \ha\|_{L^\infty_\xi}}
\newcommand{\les}{\lesssim}
\newcommand{\s}{\sgn(\xi)}
\newcommand{\si}{\sgn(\xi)}
\newcommand{\lan}{\langle \xi \rangle}
\newcommand{\lanx}{\langle x \rangle}
\newcommand{\D}{D^{\theta}_\xi}
\newcommand{\Dta}{\mathcal{D}^{\theta}_\xi}
\newcommand{\h}{\mathcal H}
\newcommand{\z}{\mathcal Z}
\newcommand{\Dt}{\mathcal{D}^{\theta}_\xi}
\newcommand{\R}{\mathbb R}
\newcommand{\Z}{\mathbb Z}
\newcommand{\p}{\partial}
\newcommand{\sgn}{\text{sgn}}
\numberwithin{equation}{section}
\newtheorem{theorem}{Theorem}[section]
\newtheorem{proposition}[theorem]{Proposition}
\newtheorem{remark}[theorem]{Remark}
\newtheorem{lemma}[theorem]{Lemma}
\begin{document}

\title[The (DGBO) equation ]{On decay of the solutions for the dispersion generalized-Benjamin-Ono and Benjamin-Ono equations}



\author[A. Cunha]{Alysson Cunha}
\address{Instituto de Matem\'atica e Estat\'istica(IME).
Universidade Federal de Goi\'as(UFG), Campus Samambaia, 131, 74001-970, Goi\^ania, Bra\-zil}
\email{alysson@ufg.br}


\subjclass[2020]{35A01, 35B60, 35Q53, 35R11}

\keywords{DGBO equation, BO equation, Initial-value problem, Well-posedness, Weighted spaces}

\begin{abstract}

We show that uniqueness results of the kind those obtained for KdV and Schr\"odinger equations (\cite{EKPV}, \cite{NahasPonce}), are not valid for the dispersion generalized-Benjamin-Ono equation in the weighted Sobolev spaces

$$H^s(\R)\cap L^2(x^{2r}dx),$$

for appropriated $s$ and $r$. 

In particular, we obtain that the uniqueness result proved for the dispersion generalized-Benjamin-Ono equation (\cite{FLP1}), is not true for all pairs of solutions $u_1\neq 0$ and $u_2\neq 0$. To achieve these results we employ the techniques present in our recent work \cite{ap1}. We also improve some Theorems established for the dispersion generalized-Benjamin-Ono equation and for the Benjamin-Ono equation (\cite{FLP1}, \cite{GermanPonce}).

\end{abstract}
 
\maketitle

\section{Introduction}\label{introduction}

This paper is concerned with the initial-value problem (IVP) associated with the dispersion
generalized-Benjamin-Ono (DGBO) equation,
\begin{equation}\label{gbo}
\begin{cases}
u_{t}+D^{a+1}_x \partial_{x}u+uu_{x}=0, \;\;x,t\in\R, \quad a \in (0,1), \\
u(x,0)=\phi(x),
\end{cases}
\end{equation}
where $D^{a+1}_x$ stands for the fractional derivative of order $a+1$ with respect to the variable $x$ and is defined, via your Fourier transform, as $D^{a+1}_x f(x)=(|\xi|^{a+1}\widehat{f})^\vee(x)$.

For $a=1$, the IVP \eqref{gbo} becomes the well-known IVP for the KdV equation, for which the local and global well-posedness (LWP and GWP, respectively) in Sobolev spaces already widely studied, see \cite{KPV1,KPV2,Bona}, and references therein. About persistence in weighted Sobolev spaces, in \cite{Kato} it was proved that the Schwartz space is preserved by the flow of the KdV equation.

When $a=0$ we obtain the IVP for the Benjamin-Ono equation (BO)

\begin{equation}\label{bo}
\begin{cases}
u_{t}+\h\partial_{x}^2 u+uu_{x}=0, \;\;x,t\in\R, \\
u(x,0)=\phi(x),
\end{cases}
\end{equation}
where $\h$ stands for the Hilbert transform.

With respect to the well-posedness in the Sobolev spaces, the IVP \eqref{bo} has been largely studied recently, see for example \cite{Ab,Koch,KenigKoenig,Tao,Mpilot,Burq,GPonce,Io}. Through the work of Iorio \cite{Iorio}, the flow of the BO equation, in general polynomial decay is not preserved in weighted Sobolev spaces. The extension for fractional weights can be found in the paper of Ponce and Fonseca \cite{FonsecaPonce}.

Lately, the idea of improving theorems in weighted Sobolev spaces for the Benjamin-Ono equation, by assuming a higher spacial decay on the initial data, can be found in \cite{flores}.  

Recently Kenig, Ponce and Vega (\cite{kpv}) obtained uniqueness results of solutions of the IVP \eqref{bo}. Roughly, they proved that for solutions $u_1$ and $u_2$ in a suitable class of functions, defined in $\R \times [0,T]$, which agree in an open set $\Omega \subset \R \times [0,T]$, we have $u_1\equiv u_2$. 

We recall that the solutions of the IVP \eqref{bo} satisfy the following conservation laws (see \cite{FonsecaPonce})

\begin{equation}\label{I1}
I_1(u)=\int_{\R} u(x,t)dx, 
\end{equation}

\begin{equation}\label{I2}
I_2(u)=\int_{\R} u^2(x,t)dx 
\end{equation}

and

\begin{equation}\label{I3}
I_3(u)=\int_{\R} \Big(|D^{1/2}u|^2-\frac{u^3}{3}\Big)(x,t)dx,
\end{equation}

where $D=\h \p_x$.

The IVP \eqref{gbo} has been extensively studied in the last years. In particular, for the local well-posedness in Sobolev spaces $H^s(\R)$, see \cite{Guogbo, MolinetRibaud, Sebastian} and references therein. In \cite{MolinetSau} the authors proved that, as in the case of BO equation, the IVP associated to the DGBO equation the application data-solution from $H^s (\R)$ to $C([0,T];H^s(\R))$ fails to be locally $C^2$ at the origin for all $T>0$ and all $s\in \R$. Hence, the IVP \eqref{gbo} cannot be solved by an argument based only on the contraction principle.

For the DGBO equation, lately Kenig, Pilot, Ponce and Vega (see \cite{KPPV}) proved uniqueness results, similarly to obtained by the authors in \cite{kpv}.

Real solutions sufficiently smooth of the IVP \eqref{gbo} satisfies (see \cite{FLP1})

\begin{equation}\label{norm}
\frac{d}{dt}\int u^2(x,t)dx=0.
\end{equation}

Let us start by recalling the following well-posedness results about the IVP \eqref{gbo}. See the notation of function spaces in the Section \ref{notation}.

\noindent {\bf Theorem A.}  \textit{Let $a\in(0,1)$.
\begin{itemize}
	\item[(a)] If $\phi\in Z_{s,r}$ then the solution of \eqref{gbo} satisfies $u\in C([-T,T]:Z_{s,r})$ if either
	\begin{itemize}
		\item[(i)] $s\geq 1+a$ and $r\in(0,1]$, or
		\item[(ii)] $s\geq2(1+a)$ and $r\in(1,2]$, or
		\item[(iii)] $s\geq \llbracket (r+1)^- \rrbracket(1+a)$ and $2<r<5/2+a$, with $\llbracket \cdot \rrbracket$ denoting the integer part function.
	\end{itemize}
\item[(b)] If $\phi\in \dot{Z}_{s,r}$ then the solution of \eqref{gbo} satisfies $u\in C([-T,T]:\dot{Z}_{s,r})$ provided
\begin{itemize}
	\item[(iv)] $s\geq \llbracket (r+1)^- \rrbracket(1+a)$ and $5/2+a<r<7/2+a.$\\
\end{itemize}
\end{itemize} }

The result in Theorem A is sharp in the sense that one cannot have solutions with additional decay in the spatial variable. More precisely,\\

\noindent {\bf Theorem B.}  \textit{Let $a\in(0,1)$ and assume $u\in C([-T,T]:Z_{s,(7/2+a)^-})$, with
	$$
	T>0 \qquad \mbox{and}\qquad s\geq (1+a)(7/2+a)+\frac{1-a}{2},
	$$
	is a solution of \eqref{gbo}. If there exist three different times $t_1,t_2,t_3\in[-T,T]$ such that
	$$
	u(\cdot,t_j)\in\dot{Z}_{s,7/2+a}, \quad j=1,2,3,
	$$ 
	then $u\equiv0$.}\\

Next theorem shows that the condition in Theorem B at two times is not sufficient to guarantee that $u\equiv0$.

\noindent {\bf Theorem C.}  \textit{Let $a\in(0,1)$ and assume $u\in C([-T,T]:Z_{s,(7/2+a)^-})$, with
	$$
	T>0 \qquad \mbox{and}\qquad s\geq (1+a)(7/2+\llbracket 1+2a \rrbracket/2)+\frac{1-a}{2},
	$$
	is a solution of \eqref{gbo} such that
	$$
\phi\in \dot{Z}_{s,7/2+\tilde{a}}, \quad \tilde{a}=\llbracket 1+2a \rrbracket/2, \quad\mbox{and}\quad \int_{\R}x\phi(x)dx\neq0.
	$$ 
Then there exists $t^*\neq0$ with
$$t^*=\frac{-4}{\|\phi\|^2}\int_{-\infty}^{\infty}x\phi(x)dx$$
such that $u(t^*)\in \dot{Z}_{7/2+\tilde{a}}$. }\\


The following theorem is about the IVP \eqref{bo}. Such result shows that the uniqueness result obtained in \cite[Theorem 3]{FonsecaPonce} cannot extend for all pairs $u_1\neq 0$ and $u_2\neq 0$ of solutions of the BO equation.

\noindent {\bf Theorem D.}  \textit{There exist $u_1,u_2\in C(\R;Z_{4,2})$ solutions of the IVP \eqref{bo} such that
$$u_1 \neq u_2$$
and for any $T>0$
$$u_1-u_2 \in L^{\infty}([-T,T];Z_{4,4}).$$
 }\\

The proof of Theorems A--C can be found in the work of Fonseca, Linares, and Ponce \cite{FLP1}. The proof of Theorem D is due to the same authors and can be seen in \cite{GermanPonce}.

The result in Theorem B may be seen as a uniqueness result. Here, we are mainly concerned with the question of when this uniqueness result can be extended to any pair  of solutions $u_1$ and $u_2$ of \eqref{gbo} with $u_1\neq0$ and $u_2\neq0$. More precisely, we are interested in the following question: assume 
\begin{equation}\label{uniq3}
u_1(\cdot,t_1)-u_2(\cdot,t_1),\ u_1(\cdot,t_2)-u_2(\cdot,t_2), \ u_1(\cdot,t_3)-u_2(\cdot,t_3)\in X,
\end{equation}
for three different times $t_1,t_2,t_3$ and some function space $X$; is it true that $u_1\equiv u_2$? In our case we will take $X=Z_{s,r}$ for suitable $s$ and $r$. For the KdV equation, this question (for two different times) was addressed in the paper of Escauriaza, Kenig, Ponce and Vega \cite{EKPV}. Indeed, the authors showed if $u_1$ and $u_2$ are two solutions of the KdV equation in $C([0,1];Z_{3,1})$ with
$$
u_1(\cdot,0)-u_2(\cdot,0),\ u_1(\cdot,1)-u_2(\cdot,1)\in H^1(e^{ax_+^{3/2}}dx)
$$
for some $a>0$ then $u_1\equiv u_2$. Here $x_+=\max\{0,x\}$. For similar questions for the Schr\"odinger equation, see the work \cite{EKPV1}, of the same authors above. For the BO equation, we recall that from the work of Fonseca and Ponce \cite[Theorem 3]{FonsecaPonce} a similar uniqueness result as in Theorem B with $a=0$ also holds. On the other hand, the Theorem D above shows that the result established in \cite[Theorem 3]{FonsecaPonce} can not extend for two non-zero solutions. Hence, the question \eqref{uniq3}  for the BO equation has a negative answer.

Now, consider the question \eqref{uniq3}, for two different times. We recall that a similar result as in Theorem C with $a=0$ can be found in \cite[Theorem 2]{GermanPonce}. Hence, by this result and the Theorem C, follows that the Theorems B and \cite[Theorem 3]{FonsecaPonce} are not valid for two different times. This shows us that the question \eqref{uniq3}, for two different times, for the DGBO and BO equations has a negative answer.


Next we present our results. We start with results about the dispersion generalized Benjamin-Ono. The first one tells us that similar uniqueness properties of solutions as in the KdV and Schr\"odinger equations (see \cite{EKPV} and \cite{EKPV1}, resp.) are not true for the IVP \eqref{gbo}. By the other hand, this also shows that the uniqueness properties obtained in Theorem B can not be extended to all pair of non-zero solutions. To achieve such results we use some ideas developed in \cite{ap1}(see also \cite{dBO}).

\begin{theorem}\label{n}
Assume $a\in(0,1)$ and let $u_1$ and $u_2$ be solutions of \eqref{gbo} satisfying 
\begin{equation}\label{inicial}
u_1(0)=\phi, u_2(0)=\varphi, \phi\not =\varphi,
\end{equation} 
\begin{equation}\label{norma}
\|\phi\|=\|\va\|,
\end{equation}
\begin{equation}\label{v}
\int \phi(x)dx=\int \va(x)dx, 
\end{equation}
\begin{equation}\label{v1}
\int x\phi(x)dx=\int x\va(x)dx, 
\end{equation}
and
\begin{equation}
\phi, \va \in Z_{s,4},
\end{equation}
where $s\geq 4(1+a)+1$.

 Then $u_1\not = u_2$ and for some $T>0$
 \begin{equation}\label{u-v4}
  u_1-u_2 \in C([-T,T]; Z_{s,4}).
 \end{equation}
\end{theorem}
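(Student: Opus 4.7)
The plan is to use Duhamel's formula for $w=u_1-u_2$ together with weighted linear estimates that exploit the vanishing low-frequency moments forced by the hypotheses. First, since $Z_{s,4}\hookrightarrow Z_{s,2}$ and $s\geq 4(1+a)+1\geq 2(1+a)$, Theorem~A(a)(ii) produces local solutions $u_1,u_2\in C([-T,T];Z_{s,2})$; at this regularity the moments $\int u_i\,dx$ and $\int x\,u_i\,dx$ converge absolutely. Direct integration by parts in the equation yields conservation of $\|u_i\|_{L^2}$ and of $\int u_i\,dx$, together with the identity
$$\frac{d}{dt}\int x\,u_i(x,t)\,dx=\frac{1}{2}\|u_i(t)\|_{L^2}^2=\frac{1}{2}\|\phi\|^2,$$
the last step using \eqref{norma}. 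Combining with \eqref{v} and \eqref{v1} one obtains
$$\int w(x,t)\,dx=0,\qquad \int x\,w(x,t)\,dx=0\qquad\text{for all }t\in[-T,T],$$
equivalently $\widehat{w}(0,t)=\p_\xi\widehat{w}(0,t)=0$ for every $t$.

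Since $w$ solves $w_t+D^{a+1}w_x+\tfrac{1}{2}\big((u_1+u_2)w\big)_x=0$ with $w(0)=\phi-\va$, Duhamel's formula reads
$$w(t)=U(t)(\phi-\va)-\tfrac{1}{2}\int_0^t U(t-t')\big((u_1+u_2)w\big)_x(t')\,dt',$$
where $U(t)$ denotes the DGBO linear group with symbol $e^{it\xi|\xi|^{1+a}}$. To estimate $\|x^4 w(t)\|_{L^2}$ one applies $\p_\xi^4$ to the Fourier symbol and uses Leibniz: the dangerous contributions as $\xi\to 0$ come from the third and fourth derivatives of the phase $\xi|\xi|^{1+a}$, behaving like $|\xi|^{a-1}$ and $|\xi|^{a-2}$. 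These are absorbed by $\p_\xi\widehat g$ and $\widehat g$ precisely when $\widehat g(0)=\p_\xi\widehat g(0)=0$, yielding a weighted bound of Fonseca--Linares--Ponce type,
$$\|U(t)g\|_{Z_{s,4}}\lesssim (1+|t|)^2\big(\|g\|_{H^s}+\|x^4 g\|_{L^2}\big),$$
valid for any $g$ with those two vanishing Fourier moments---the same kind of computation used in \cite{FLP1, ap1}.

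The two-moment hypothesis must also hold for the Duhamel integrand at each time $t'$. Its zeroth moment vanishes automatically as the integrand is a pure derivative, while
$$\int x\big((u_1+u_2)w\big)_x\,dx=-\int(u_1+u_2)w\,dx=-\big(\|u_1\|^2-\|u_2\|^2\big)=0$$
by $L^2$-conservation together with \eqref{norma}. Hence the weighted bound applies to each slice $U(t-t')\big((u_1+u_2)w\big)_x$, and together with standard Leibniz/commutator estimates for $\|x^4\big((u_1+u_2)w\big)_x\|_{L^2}$ in terms of $\|u_i\|_{H^s}$ and $\|w\|_{Z_{s,4}}$, a Gronwall argument closes the a priori bound and gives \eqref{u-v4}. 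The statement $u_1\ne u_2$ is immediate from $w(0)=\phi-\va\ne 0$ and uniqueness in $Z_{s,2}$. The main technical point is the weighted group estimate: one must verify that the singular factors $|\xi|^{a-1},|\xi|^{a-2}$ arising from the phase are exactly balanced by the two vanishing Fourier moments of $w$, with no third compatibility required at $\xi=0$. This works for every $a\in(0,1)$ since $4<9/2+a$, which parallels the single-moment obstruction identified in Theorem~C at the decay level $7/2+\tilde a$.
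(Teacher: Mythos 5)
Your proposal is correct and follows essentially the same route as the paper's proof: Duhamel's formula for $w$, the two vanishing Fourier moments of $\sigma$ and of the nonlinear term $z$ (obtained exactly as you do, from conservation of the mean and of the $L^2$ norm together with \eqref{norma}--\eqref{v1} and the identity $\frac{d}{dt}\int x u\,dx=\tfrac12\|\phi\|^2$), and a Taylor expansion of $\widehat{\sigma}$ and $\widehat{z}$ at $\xi=0$ to absorb the $|\xi|^{a-2}$ and $|\xi|^{a-1}$ singularities produced by $\partial_\xi^4$ applied to the phase. The only cosmetic differences are that the paper works a priori in $Z_{s,(5/2+a)^-}$ via Theorem A(iii), which supplies the weighted norms $\||x|^{5/2}u_i\|$ needed to split the weight $x^4$ between the two factors of the nonlinearity, and then bounds the Duhamel integrand directly by a constant $N^2$ rather than closing with Gronwall.
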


The idea to prove Theorem \ref{n} is the following. Let $u_1$ and $u_2$ be solutions of the IVP \eqref{gbo} with initial data $\phi$ and $\varphi$, respectively. Then $w:=u_1 - u_2$ satisfies the following variable coefficients linear IVP 
\begin{equation}\label{wgbo}
	\begin{cases}
		w_{t}+D^{a+1}_x \partial_{x}w+u_1\p_x w+w\p_x u_{2}=0, \;\;x\in\R, \;t>0, \quad a \in (0,1), \\
		w(x,0)=\phi(x)-\varphi(x).
	\end{cases}
\end{equation}

By setting $\sigma=\phi-\varphi$ and $z=\frac12 \p_x (u_1^2-u_2^2)=u_1\p_x w+w\p_x u_{2}$, the integral equation associated with  \eqref{wgbo} is given by
\begin{equation}
	\begin{split}\label{int}
		w(t)=\ &U(t)\sigma-\int_0^t U(t-\tau) z(\tau)d\tau.
	\end{split}
\end{equation}
Thus, using the Fourier transform, we manage to show that the right-hand side of \eqref{int} belongs to $C([-T,T]; Z_{s,4}).$

In the following, we improve the last theorem, by assuming additional decay on the initial data.
\begin{theorem}\label{no}
Let $\theta \in [0,1)$ and $(r,a)\in \{4\}\times (0,1/2]\cup \{5\}\times (1/2,1)$ be the such that $r+\theta<9/2+a$. Assume $u_1,u_2$ solutions of the IVP \eqref{gbo} with initial data $\phi$ and $\varphi$, respectively. Suppose that the hypothesis \eqref{inicial}--\eqref{v1} are valid and

\begin{equation}
\phi, \va \in Z_{s,r+\theta}, \;\; \mbox{where}\;\; s\geq (r+\theta)(1+a)+1.
\end{equation}

Then $u_1\not = u_2$ and for some $T>0$
 \begin{equation}\label{u-v5t}
  u_1-u_2 \in C([-T,T]; Z_{s,r+\theta}).
 \end{equation}
 
\end{theorem}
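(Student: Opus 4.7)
The proof parallels that of Theorem \ref{n}. With $\sigma = \phi-\varphi$ and $z = \tfrac12\partial_x(u_1^2-u_2^2)$, the difference $w = u_1-u_2$ satisfies the Duhamel identity \eqref{int}, and the task reduces to bounding the right-hand side in $Z_{s,r+\theta}$. Since Theorem \ref{n} already yields $w \in C([-T,T]; Z_{s,4})$ under weaker hypotheses, I only need to upgrade the decay from $4$ to $r+\theta$; the split $(r,a) \in \{4\}\times(0,1/2] \cup \{5\}\times(1/2,1)$ is exactly the range in which $\lfloor r+\theta\rfloor \in \{4,5\}$ is still compatible with the dispersive threshold $r+\theta<9/2+a$ inherited from Theorem A.

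Concretely, I would apply $\langle\xi\rangle^s$ and then $\mathcal{D}^\theta_\xi\,\partial_\xi^{r}$ to
$$\widehat{w}(\xi,t) \;=\; e^{it\xi|\xi|^{1+a}}\widehat\sigma(\xi) \;-\; \int_0^t e^{i(t-\tau)\xi|\xi|^{1+a}}\, \widehat z(\xi,\tau)\, d\tau,$$
and estimate the result in $L^2_\xi$. For the linear piece, Leibniz' rule produces terms in which $k$ derivatives ($2\le k\le r$) land on the oscillatory phase, generating coefficients of the form $t^j|\xi|^{(1+a)j-m}$ that are singular at $\xi=0$. Hypotheses \eqref{v} and \eqref{v1} translate into $\widehat\sigma(0) = \partial_\xi\widehat\sigma(0) = 0$, i.e.\ $\widehat\sigma(\xi) = O(|\xi|^2)$ near the origin, which is precisely the vanishing required to render those integrals finite in the regime $r+\theta<9/2+a$. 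The fractional operator $\mathcal{D}^\theta_\xi$ is then handled through its Stein-type pointwise formula together with the weighted product estimates developed in \cite{ap1}.

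For the Duhamel term the same scheme applies once one verifies that $\widehat z(\xi,\tau) = O(\xi^2)$ as $\xi\to 0$ uniformly in $\tau$. This follows from the $L^2$-conservation \eqref{norm}: by \eqref{norma}, $\|u_1(\tau)\| = \|u_2(\tau)\|$ for every $\tau$, so $\int(u_1^2-u_2^2)(x,\tau)\,dx \equiv 0$, whence $\widehat z = \tfrac{i\xi}{2}\widehat{u_1^2-u_2^2}$ vanishes to order two at the origin. A direct computation on the equation shows, moreover, that $\int xw(x,t)\,dx$ is conserved in $t$ and equal to $0$ by \eqref{v1}, so $\widehat w(0,t) = \partial_\xi\widehat w(0,t) \equiv 0$. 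With these cancellations in hand, Minkowski's inequality in $\tau$ reduces the Duhamel estimate to $L^2_\xi$-bounds on $\mathcal{D}^\theta_\xi\partial_\xi^r\widehat{u_j^2}$, which follow from fractional Leibniz rules combined with the regularity and decay of $u_1, u_2$ provided by Theorem A.

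The main obstacle is the most singular contribution, in which all $r$ $\xi$-derivatives fall on the phase and $\mathcal{D}^\theta_\xi$ must then be distributed across the resulting product: there the $L^2_\xi$-integrability near $\xi = 0$ is borderline, and the upper bound $r+\theta < 9/2+a$ is precisely the sharp threshold that keeps it finite. A Taylor expansion of $\widehat\sigma$ and $\widehat z$ to order two at the origin, combined with the refined weighted bounds for $\mathcal{D}^\theta_\xi$ established in \cite{ap1} (and also used in \cite{dBO}), should close this estimate. Continuity in $t$ of the resulting quantity then follows from dominated convergence applied to the Duhamel integral, yielding \eqref{u-v5t}.
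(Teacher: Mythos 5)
Your proposal follows essentially the same route as the paper: Duhamel's formula in Fourier variables, Leibniz expansion of $\partial_\xi^r$ on the phase, Taylor expansion of $\hat\sigma$ (and of $\hat z$, using \eqref{norma} and the $L^2$ conservation law) to second order at $\xi=0$ to tame the singular coefficients $t|\xi|^{1+a-r}$, Stein-derivative estimates for $\mathcal D^\theta_\xi(|\xi|^\alpha\chi)$ giving exactly the threshold $r+\theta<9/2+a$, and Minkowski in $\tau$ for the nonlinear term. The only structural detail you gloss over is that the paper treats $r=5$ by first bootstrapping through the $r=4$ case to get $w\in C([-T,T];Z_{s,4+\theta})$ before taking the fifth derivative, but this is a minor organizational point and your sketch is otherwise consistent with the paper's argument.
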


Roughly speaking, Theorem \ref{no} reduces to Theorem \ref{n} when we set $\theta=0$. The idea to prove Theorem \ref{no} is exactly the same of Theorem \ref{n} but now we need to handle with fractional derivatives.


The next theorem corresponds to an improvement of Theorem C, in which we assuming an additional decay on the initial data. Note that this result also allows us to conclude that Theorem B cannot be extended for two different times, even assuming a higher decay in the initial data.

\begin{theorem}\label{2t}
Let $\theta \in [0,1)$ and $(r,a)\in \{4\}\times (0,1/2]\cup \{5\}\times(1/2,1)$ be the such that $r+\theta<9/2+a$. Suppose that $u\in C([-T,T];\z_{s,(\frac72+a)^-})$ is a solution of the IVP \eqref{gbo}, for $s\geq (r+\theta)(1+a)+1$. If $u(0)=\phi\in \dot{Z}_{s,r+\theta}$ and 
\begin{equation*}
\int_{-\infty}^{\infty}x\phi(x)dx\neq 0,
\end{equation*}
then $$u(t^*)\in Z_{s,r+\theta},$$ 
where
$$t^*=\frac{-4}{\|\phi\|^2}\int_{-\infty}^{\infty}x\phi(x)dx.$$
\end{theorem}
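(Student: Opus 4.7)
My plan is to adapt the argument of Theorem~C from \cite{FLP1} to the higher decay exponent $r+\theta$, using the fractional-derivative machinery from \cite{ap1}. The hypothesis $u\in C([-T,T];\z_{s,(7/2+a)^-})$ with $s\ge(r+\theta)(1+a)+1$ supplies the $H^s$ regularity needed for all Sobolev estimates on $u$ and $u^2$ below. Writing the Duhamel formula for \eqref{gbo} on the Fourier side,
\begin{equation*}
\hat u(\xi,t)=e^{-it\psi(\xi)}\hat\phi(\xi)-\int_0^t e^{-i(t-\tau)\psi(\xi)}\,\tfrac{i\xi}{2}\,\widehat{u^2}(\xi,\tau)\,d\tau,\qquad \psi(\xi)=\xi|\xi|^{a+1},
\end{equation*}
the claim $u(t^*)\in Z_{s,r+\theta}$ reduces via Plancherel to $\Dta\,\p_\xi^r\hat u(\cdot,t^*)\in L^2_\xi$, with $r\in\{4,5\}$.

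The heart of the proof is the moment computation at $\xi=0$. Factoring out the oscillation by setting
\begin{equation*}
g(\xi,t)=\hat\phi(\xi)-\int_0^t e^{i\tau\psi(\xi)}\,\tfrac{i\xi}{2}\,\widehat{u^2}(\xi,\tau)\,d\tau,
\end{equation*}
so that $\hat u(\xi,t)=e^{-it\psi(\xi)}g(\xi,t)$, the factor $\xi$ in the integrand together with $\hat\phi(0)=0$ (coming from $\phi\in\dot Z$) yields $g(0,t)=0$. Differentiating once in $\xi$ and using both the conservation $\|u(\cdot,\tau)\|_{L^2}=\|\phi\|_{L^2}$ and the vanishing $\p_\xi\psi(0)=0$, one obtains
\begin{equation*}
\p_\xi g(0,t)=\p_\xi\hat\phi(0)-c\,t\,\|\phi\|_{L^2}^2
\end{equation*}
for an explicit constant $c$, and the right-hand side vanishes precisely at the time $t^*$ prescribed in the statement. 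This is the critical cancellation that allows decay strictly beyond the generic threshold $7/2+a$ at the single time $t^*$.

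To conclude, I would distribute $\p_\xi^r$ through $\hat u(\xi,t^*)=e^{-it^*\psi(\xi)}g(\xi,t^*)$ via Leibniz. The terms $[\p_\xi^{r-k}e^{-it^*\psi}]\,\p_\xi^k g$ are potentially singular at $\xi=0$ through factors $\psi^{(j)}(\xi)\sim|\xi|^{a+2-j}$, but the double vanishing $g(0,t^*)=\p_\xi g(0,t^*)=0$ supplies a factor of order $\xi^2$ in the low-$k$ terms. A direct case check in the two regimes $(r,a)\in\{4\}\times(0,\tfrac12]$ and $\{5\}\times(\tfrac12,1)$ shows that each product is square-integrable near $\xi=0$. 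Finally, $\Dta$ is handled via the Stein pointwise formula and the commutator/product estimates from \cite{ap1} (already used in the proof of Theorem~\ref{no}); the constraint $r+\theta<9/2+a$ is exactly what keeps the additional fractional singularity square-integrable at the origin.

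The main obstacle is the interplay between the non-local operator $\Dta$ and the singular-phase contributions: the cancellation $g(0,t^*)=\p_\xi g(0,t^*)=0$ is only a local statement at $\xi=0$, while $\Dta$ samples $\hat u$ at all scales, so the final estimate must combine second-order vanishing at the origin with global weighted control of $\p_\xi^r g$. This mirrors the technical core of Theorem~\ref{no} and should be addressable by the same tools.
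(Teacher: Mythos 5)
Your overall strategy (Duhamel on the Fourier side, Leibniz expansion of $\partial_\xi^r$, isolation of the singular powers of $|\xi|$ near the origin, use of the first moment and of the $L^2$ conservation law) is the same as the paper's, but the cancellation mechanism you single out is not the right one, and as stated it fails at $t^*$. The constant in your formula for $\partial_\xi g(0,t)$ is forced: since $\partial_\xi\psi(0)=0$ and the integrand carries a factor $\xi$, one gets $\partial_\xi g(0,t)=\partial_\xi\hat\phi(0)-\tfrac{i}{2}\,t\,\|\phi\|^2$, and with $\partial_\xi\hat\phi(0)=-i\int x\phi\,dx$ this vanishes at $t=-2\|\phi\|^{-2}\int x\phi\,dx=t^*/2$, \emph{not} at $t^*$. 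Hence the ``double vanishing $g(0,t^*)=\partial_\xi g(0,t^*)=0$'' on which your low-$k$ analysis rests does not hold at the time in the statement, and the $k=1$ Leibniz term $\partial_\xi^{r-1}(e^{-it^*\psi})\,\partial_\xi g$ would retain a genuine $|\xi|^{a-1}$ (resp.\ $|\xi|^{a-2}\sgn(\xi)$ for $r=5$) singularity, which in the regimes $(r,a)\in\{4\}\times(0,1/2]$ and $\{5\}\times(1/2,1)$ is not locally square integrable, let alone after applying $\Dta$.

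Moreover, even if $\partial_\xi g(0,\cdot)$ did vanish, your accounting omits a singular contribution of exactly the same order hidden inside the $k=r$ term $e^{-it\psi}\partial_\xi^r g$: when $\partial_\xi^r$ falls on the oscillatory factor $e^{i\tau\psi}$ inside the Duhamel integral, the worst pieces are $\sim \tau|\xi|^{a-2}\sgn(\xi)\,\hat\vartheta+\tau|\xi|^{a-1}\partial_\xi\hat\vartheta$ with $\hat\vartheta=\tfrac{i\xi}{2}\widehat{u^2}$, and since $\partial_\xi\hat\vartheta(0,\tau)=\tfrac{i}{2}\|\phi\|^2\neq0$ this produces, after integrating in $\tau$, an extra un-cancelled term $c\,|\xi|^{a-1}\tfrac{i}{4}t^2\|\phi\|^2$. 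The correct condition is that the \emph{total} coefficient of the singular power, namely $c\bigl(t\,\partial_\xi g(0,t)+\tfrac{i}{4}t^2\|\phi\|^2\bigr)=c\bigl(t\,\partial_\xi\hat\phi(0)-\tfrac{i}{4}t^2\|\phi\|^2\bigr)$, vanish; this is precisely the function $F(t)=it\bigl(\int x\phi\,dx+\tfrac{1}{4}\|\phi\|^2 t\bigr)$ appearing in the paper's proof, and it vanishes exactly at $t^*$. So your route is repairable, but the repair is exactly the bookkeeping the paper performs: track the $|\xi|^{a-1}$ (resp.\ $|\xi|^{a-2}\sgn(\xi)$) singularity through \emph{both} the linear and the Duhamel parts of $\partial_\xi^r\hat u$ and cancel only their sum at $t^*$; the remaining terms are then controlled as in the proof of Theorem~\ref{no}, where the constraint $r+\theta<9/2+a$ and Proposition~\ref{DsteinL3} enter as you indicate.
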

The goal of the next result is to improve the Theorem A. 

\begin{theorem}\label{nolow}
Let $s>r(1+a)+1$. Thus, the following statements are true
\begin{enumerate}
\item [1)] If $r\in (0,5/2+a)$, then the solution of IVP \eqref{gbo} is such that $u\in C([-T,T]; Z_{s,r})$.
\item [2)] If $r\in [5/2+a,7/2+a)$, then the solution of IVP \eqref{gbo} is such that $u\in C([-T,T]; \dot{Z}_{s,r})$.
\item [3)] For $r(1+a)>1/2$ and $r\in (0,5/2+a)$, the IVP \eqref{gbo} is LWP (GWP resp., if $a>1/3$) in $Z_{s,r}$. 
\item [4)] For $r(1+a)>1/2$ and $r\in [5/2+a,7/2+a)$, the IVP \eqref{gbo} is LWP (GWP resp., if $a>1/3$) in $\dot{Z}_{s,r}$. 
\end{enumerate}
\end{theorem}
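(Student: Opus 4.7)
\textbf{Proof sketch for Theorem \ref{nolow}.} The plan is to upgrade the classical $H^s$ local well-posedness of \cite{MolinetRibaud,Sebastian} to persistence in the weighted components of $Z_{s,r}$ and $\dot Z_{s,r}$ via a Fourier-side energy argument, in direct parallel with the treatment of Theorem \ref{n}. Taking the Fourier transform of Duhamel's formula we obtain
$$ \hat u(\xi,t)=e^{it\xi|\xi|^{1+a}}\hat\phi(\xi)-\frac{i\xi}{2}\int_0^t e^{i(t-\tau)\xi|\xi|^{1+a}}\widehat{u^2}(\xi,\tau)\,d\tau. $$
I would apply $D^r_\xi$ to both sides, interpreted via Stein's pointwise formula for non-integer $r$ as in \cite{NahasPonce,ap1}, and invoke the Plancherel-type identification of $\|\,|x|^r u\|_{L^2}$ with $\|D^r_\xi\hat u\|_{L^2}$ modulo lower-order corrections.

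The main contributions to $\|D^r_\xi\hat u\|_{L^2}$ come from derivatives of the oscillatory phase $\xi|\xi|^{1+a}$ and from the Duhamel integrand. Each $\partial_\xi$ landing on the phase contributes a factor of order $t|\xi|^{1+a}$, so the linear-propagator term is controlled by $\||\xi|^{r(1+a)}\hat\phi\|_{L^2}\lesssim\|\phi\|_{H^{r(1+a)}}$, explaining the $r(1+a)$ piece of the threshold on $s$. The Duhamel term carries an extra power of $\xi$ from the derivative structure of $uu_x$, which forces one additional unit of regularity and yields the sharp condition $s>r(1+a)+1$ of (1)--(2). The product $\widehat{u^2}$ is handled by the algebra property of $H^s$ for $s>1/2$ together with weighted $L^2$ norms of strictly lower order, so a Gronwall argument on $\|D^r_\xi\hat u(t)\|_{L^2}$ closes the estimate; time-continuity is obtained by a standard density argument. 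For case (2), where $r\in[5/2+a,7/2+a)$, the operator $D^r_\xi$ fails to produce an $L^2$ function near $\xi=0$ unless $\hat u(\cdot,t)$ vanishes at the origin, i.e.\ $\int u(\cdot,t)\,dx=0$; this condition is propagated by the flow via conservation of $I_1$ and is precisely what is encoded in $\dot Z_{s,r}$.

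For the well-posedness statements (3)--(4), the hypothesis $r(1+a)>1/2$ gives, by Cauchy--Schwarz with the weight $\langle x\rangle^{-r}$, the embedding $Z_{s,r}\hookrightarrow L^1(\R)$, which secures continuity of $\xi\mapsto\hat u(\xi,t)$ at the origin and hence continuous dependence of the data-to-solution map in the weighted topology. Global well-posedness for $a>1/3$ then follows by combining \eqref{norm} with the Sobolev-space GWP of \cite{MolinetRibaud}. The principal technical obstacle is the fractional weight $|x|^r$ for non-integer $r$: the operator $D^r_\xi$ is nonlocal, so its action on products and on oscillatory exponentials demands sharp pointwise fractional-derivative formulas and a Stein-style complex interpolation between the integer weights $\lfloor r\rfloor$ and $\lfloor r\rfloor+1$, as implemented in \cite{ap1}. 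A secondary delicate point is matching the Taylor-type expansions of $\hat u$ near $\xi=0$ across the endpoint $r=5/2+a$, where the functional framework transitions from $Z_{s,r}$ to $\dot Z_{s,r}$.
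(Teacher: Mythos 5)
Your overall architecture (Duhamel's formula on the Fourier side, weighted estimates for the propagator, Gronwall) matches the paper's, but two of your key steps do not survive scrutiny. First, the claim that ``each $\partial_\xi$ landing on the phase contributes a factor of order $t|\xi|^{1+a}$'' is false: differentiating $e^{it\xi|\xi|^{1+a}}$ repeatedly produces terms with symbols $|\xi|^{a}$, $|\xi|^{a-1}$, $|\xi|^{a-2}$, $|\xi|^{a-3}$ (see \eqref{F4}--\eqref{F6}), and these negative powers of $|\xi|$ near the origin are precisely what creates the threshold $r<5/2+a$ and forces the dotted space $\dot{Z}_{s,r}$ --- handled in the paper via Taylor expansion of $\hat\phi$ at $\xi=0$ together with the $L^2$-integrability criteria of Propositions \ref{Dstein} and \ref{DsteinL3} --- for $r\in[5/2+a,7/2+a)$. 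Your sketch acknowledges a problem at $\xi=0$ only in passing for case 2) and would not, as written, produce the estimate $\|\lanx^r U(t)\phi\|\lesssim\|J^{r(1+a)}\phi\|+\|\lanx^r\phi\|$, which is the engine of the whole proof. Second, your explanation of the hypothesis $r(1+a)>1/2$ is wrong: Cauchy--Schwarz with the weight $\langle x\rangle^{-r}$ gives $L^2_r\hookrightarrow L^1$ only when $r>1/2$, and $r(1+a)>1/2$ is strictly weaker than that for $a\in(0,1)$. In the paper this hypothesis serves a different purpose: it guarantees $s>r(1+a)+1>3/2$, so that $\|\partial_x u\|_{L^\infty}\lesssim\|u\|_{H^s}$ is available in the Gronwall estimate for the difference of two solutions, which is what yields continuous dependence in 3)--4).

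A further, more structural difference: for the persistence statements 1)--2) the paper does not run a single Gronwall on $\|\lanx^r u(t)\|$ but an induction on the weight, writing $r=r_k$ with small increments and splitting the weighted nonlinearity as $\lanx^{r_k}u\,\partial_x u=(\lanx^{\theta/2}\partial_x u)(\lanx^{r_{k-1}}u)$; the induction hypothesis supplies finiteness of the lower-order weighted norm, and Lemma \ref{interx} controls $\|\lanx^{\theta/2}\partial_x u\|_{L^\infty}$. Your direct Gronwall on $\|D_\xi^r\hat u(t)\|_{L^2}$ could in principle replace this, but only after one justifies that the quantity being Gronwalled is a priori finite (e.g.\ by working with the truncated weights $\lanxN$ and passing to the limit, or by the paper's bootstrap); as stated, the argument assumes what it is trying to prove.
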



Now, we put in order some remarks about our results.

\begin{enumerate}
\item [i)] In the last Theorem we push down the index $s$ given in Theorem A, in the sense that we will describe below.
From the hypotheses of Theorem A, it is seen that

\[
\llbracket (r+1)^- \rrbracket(1+a)= 
  \begin{cases}
      1+a, & \mbox{for} \ \  r\in(0,1] \\
      2(1+a), & \mbox{for} \ \ r\in (1,2].\\
       
  \end{cases}
\]

Moreover, we see that 

$$r(1+a)+1<\llbracket (r+1)^- \rrbracket(1+a)$$

is true, for $n-1<r<n-\frac{1}{1+a}$, where $n=1,...,5$.

Hence, in such intervals of $r$, we obtain less regularity compared to what appears in Theorem A.

\item [ii)] The techniques used to prove Theorem \ref{nolow} are different from those employed in \cite{FLP1}; here we use frequently the integral equation associated with the IVP \eqref{gbo} together with weighted estimates for $U(t)\phi$ (see Lemma \ref{DF} below). 

\item [iii)] Due the presence of the non linearity $uu_x$ in integral equation associated with the IVP \eqref{gbo} our Theorems \ref{n}--\ref{nolow} holds for $s\geq (1+a)r+1$. We believe that the regularity $s$ can be pushed down to $s\geq (1+a)r$; however, the techniques employed do not allow us to achieve this index.

\end{enumerate}


Now, we will describe our results about the Benjamin-Ono equation. The following theorem shows us that, assuming on initial data a decay of order five, it's possible to improve the Theorem D. This shows that the answer to the question \eqref{uniq3} is negative, even assuming more decay on initial data. 

\begin{theorem}\label{low5}

Let $u_1,u_2$ solutions of the IVP \eqref{bo} with initial data $\phi$ and $\varphi$, respectively. Suppose that conditions $\phi\neq \varphi$ and \eqref{norma}--\eqref{v1} are satisfied. In addition, assume

\begin{equation}\label{x2phi}
\int x^2\phi(x)=\int x^2\va(x)dx,
\end{equation} 
\begin{equation}\label{xphi2}
\int x\phi^2(x)=\int x\va^2(x)dx,
\end{equation}
\begin{equation}\label{I3}
I_3(\phi)=I_3(\va)
\end{equation}

and
\begin{equation}
\phi, \va \in Z_{s,5},
\end{equation}
where $s\geq 5$.

 Then $u\not = v$ and for all $T>0$
 \begin{equation}\label{u-v}
  u_1-u_2 \in C([-T,T];Z_{s,5}).
 \end{equation}

\end{theorem}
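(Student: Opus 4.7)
Following the scheme sketched after Theorem~\ref{n}, set $w=u_1-u_2$, $\sigma=\phi-\va$, and
\begin{equation*}
z=u_1\p_x w+w\p_x u_2=\tfrac12\p_x(u_1^2-u_2^2),
\end{equation*}
so that $w$ solves the integral equation $w(t)=U(t)\sigma-\int_0^t U(t-\tau)z(\tau)\,d\tau$, where $U(t)$ is the BO unitary group with Fourier symbol $e^{it\xi|\xi|}$ (up to sign). Since $w\in C([-T,T];Z_{s,4})$ by Theorem~D, by Plancherel the task reduces to bounding $\p_\xi^5\widehat{w}(\cdot,t)$ in $L^2_\xi$ uniformly on $[-T,T]$, because the Sobolev part $\|w(\cdot,t)\|_{H^s}$ is already controlled by standard energy estimates on the linear equation \eqref{wgbo} (with $a=0$).

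The hypotheses \eqref{norma}--\eqref{I3} are designed so that both $\widehat{w}(\cdot,t)$ and $\widehat{z}(\cdot,t)$ vanish to second order at $\xi=0$, uniformly in $t$. Indeed, conservation of $I_1$ and \eqref{v} give $\widehat{w}(0,t)=0$; the identity $\tfrac{d}{dt}\!\int\! xu\,dx=\tfrac12\|u\|^2$ combined with \eqref{norma}, \eqref{v1}, and conservation of $I_2$ yields $\int xw(x,t)\,dx=0$, that is $\p_\xi\widehat{w}(0,t)=0$; the virial-type identity $\tfrac{d}{dt}\!\int\! xu^2\,dx=-2I_3(u)$ together with \eqref{xphi2} and \eqref{I3} produces $\int x(u_1^2-u_2^2)(x,t)\,dx=0$, which plugged into the evolution of $\int x^2 w\,dx$ (the dispersive contribution $-\int x^2\h\p_x^2 w\,dx$ vanishes thanks to $\widehat{w}(0,t)=0$) gives $\int x^2 w(x,t)\,dx=\int x^2\sigma\,dx=0$ by \eqref{x2phi}, hence $\p_\xi^2\widehat{w}(0,t)=0$. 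The analogous vanishings for $\widehat z$ follow at once: $\widehat z(0,t)=0$ because $z$ is a derivative; $\int xz\,dx=-\tfrac12\!\int(u_1^2-u_2^2)\,dx=0$ by $I_2$; $\int x^2 z\,dx=-\!\int x(u_1^2-u_2^2)\,dx=0$ by the virial identity.

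Given these vanishings, I would expand $\p_\xi^5[e^{it\xi|\xi|}\widehat{\sigma}(\xi,t)]$ by Leibniz. Since $\xi|\xi|$ is only $C^2$ at the origin, the high derivatives $\p_\xi^k e^{it\xi|\xi|}$ for $k\in\{3,4,5\}$ contain distributional pieces proportional to $\delta_0,\delta_0',\delta_0''$; these are absorbed by the expansion $\widehat{\sigma}(\xi,t)=O(\xi^3)$ near $\xi=0$ coming from the order-two vanishing above, and the smooth-frequency complement is controlled using $\sigma\in Z_{s,5}$. The Duhamel term is treated identically after writing $z=\tfrac12\p_x((u_1+u_2)w)$ and using the a priori $Z_{s,4}$-bound on $w$ from Theorem~D together with the Sobolev regularity $s\geq 5$ to propagate the needed weighted $L^2_\xi$ estimates on $\widehat z(\cdot,\tau)$. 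Closing the argument by Gronwall on $[-T,T]$ then yields $w\in C([-T,T];Z_{s,5})$ for every $T>0$.

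The main obstacle will be the clean, quantitative handling of the distributional contributions of $\p_\xi^k e^{it\xi|\xi|}$ at the origin: one must identify at each order $k\in\{3,4,5\}$ the exact coefficients of $\delta_0^{(\ell)}$, pair them with the vanishing of $\p_\xi^{5-k}\widehat\sigma(0,t)$ (and its Duhamel analog for $\widehat z$), and verify that the remaining regular part lies in $L^2_\xi$ with polynomial-in-$t$ growth acceptable over $[-T,T]$. The design of the hypotheses \eqref{x2phi}, \eqref{xphi2}, \eqref{I3} is precisely to produce the three Fourier-side cancellations needed to pass from weight $4$ (Theorem~D) to weight $5$.
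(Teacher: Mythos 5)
Your strategy is genuinely different from the paper's. You run the Fourier--Duhamel scheme of Theorems \ref{n}--\ref{no} (differentiate $\psi\hat\sigma$ and $\psi\hat z$ five times in $\xi$ and use the cancellations at $\xi=0$), whereas the paper proves Theorem \ref{low5} by a physical-space weighted energy estimate: it multiplies the linear equation \eqref{bw} by $\lanxN^2x^8w$, decomposes $x^4\h\p_x^2w$ through Hilbert-transform commutator identities, controls the commutators by the Calder\'on estimate, and closes with Gronwall on the truncated quantity $\|\lanxN x^4w\|$. Your identification of the role of the extra hypotheses is correct and matches the paper's: \eqref{norma}--\eqref{v1} give $\int w\,dx=\int xw\,dx=0$, and \eqref{x2phi}--\eqref{I3} combined with Remark \ref{remark2} give $\int x^2w(\cdot,t)\,dx=0$ for all $t$ (in the paper this is exactly what makes $x\h(x^2w)=\h(x^3w)$ in \eqref{mathC}; in your setup it is $\p_\xi^2\hat w(0,t)=0$).

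There is, however, a genuine gap that is specific to your route and explains why the paper abandons it for BO: the Duhamel term loses a full derivative on the nonlinearity. To bound $\|\p_\xi^5(\psi\hat z)\|$ you need, among other things, $\|J^{5}z(\tau)\|=\tfrac12\|J^{5}\p_x\big((u_1+u_2)w\big)\|$, and by Kato--Ponce/algebra estimates this requires $u_i,w\in H^{6}$, not $H^{5}$. This is precisely the phenomenon recorded in Remark (iii) of the introduction ($s\geq (1+a)r+1$ for the integral-equation method, i.e.\ $s\geq 6$ here for $a=0$, $r=5$), so your argument cannot reach the stated threshold $s\geq5$. The energy method avoids the loss because the derivative in $u_1\p_xw+w\p_xu_2$ is removed by integrating $\int\p_x(x^8\lanxN^2 u_1)\,w^2$ by parts. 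A second, smaller gap: for $a=0$ the phase $\xi|\xi|$ is only $C^1$, so the identities \eqref{F2}--\eqref{F6} acquire genuine distributional terms $\delta_0,\delta_0',\delta_0''$ multiplied not just by $\hat\sigma$ but by products of $\sgn\xi$, powers of $|\xi|$ and derivatives of $\hat\sigma$ and $\hat z$; you flag this as ``the main obstacle'' but do not carry out the bookkeeping, and the uniform-in-$\tau$ version for $\hat z$ (which requires $\lanx^5 z(\tau)\in L^2$ and $\p_\xi^2\hat z(\cdot,\tau)\in C^0$ near $0$) is asserted rather than proved. As written, the proposal is a plausible plan for a weaker statement ($s\geq6$) rather than a proof of Theorem \ref{low5}.
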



The following result improves the last Theorem. It also shows that, by assuming an additional decay on the initial data of order $3/2^{-}$, we can improve Theorem D.

\begin{theorem}\label{low5theta}

Let $\theta\in (0,1/2)$ and $u_1,u_2$ solutions of the IVP \eqref{bo} with initial data $\phi$ and $\varphi$, respectively. Suppose that the hypothesis $\phi\neq \varphi$, \eqref{norma}--\eqref{v1} and \eqref{x2phi}--\eqref{I3} are valid. In addition, assume that

\begin{equation}
\phi, \va \in Z_{s,5+\theta},
\end{equation}
where $s\geq 5+\theta$.

 Then $u\not = v$ and for all $T>0$
 \begin{equation}\label{u-v5theta}
  u_1-u_2 \in C([-T,T];Z_{s,5+\theta}).
 \end{equation}
\end{theorem}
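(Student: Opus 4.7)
The plan is to mimic the strategy used for Theorem \ref{low5}, extending it to the fractional weight $\theta\in(0,1/2)$ via the same type of Stein-derivative techniques that passed from Theorem \ref{n} to Theorem \ref{no}. Setting $w=u_1-u_2$ and $\sigma=\phi-\va$, the difference solves
\begin{equation*}
w_{t}+\h\p_x^{2}w+u_1\p_x w+w\p_x u_2=0,\qquad w(0)=\sigma,
\end{equation*}
so Duhamel gives
\begin{equation*}
w(t)=U(t)\sigma-\int_{0}^{t}U(t-\tau)\,z(\tau)\,d\tau,\qquad z=\tfrac12\p_x(u_1^{2}-u_2^{2}),
\end{equation*}
where $U(t)$ is the linear BO semigroup with symbol $e^{-it\xi|\xi|}$. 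The $H^{s}$ part of $w$ is controlled by the BO well-posedness theory, so the work reduces, via Plancherel, to bounding $\|D^{5+\theta}_\xi \widehat{U(t)\sigma}\|_{L^{2}_{\xi}}$ and the corresponding Duhamel integral in $L^{2}_{\xi}$.

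For the linear term I would apply $D^{5+\theta}_\xi$ by writing it as $\p_\xi^{5}\circ D^{\theta}_\xi$ (with $D^{\theta}_\xi$ realized through the Stein pointwise characterization) and then expanding $\p_\xi^{5}\!\bigl(e^{-it\xi|\xi|}\hat\sigma(\xi)\bigr)$ by Leibniz. Derivatives of the phase produce the factors $\p_\xi(\xi|\xi|)=2|\xi|$, $\p_\xi^{2}(\xi|\xi|)=2\si$, with distributional $\delta$-type terms beyond; these $\si$-singularities at the origin are neutralised by the three vanishing moment assumptions, which translate into
\begin{equation*}
\hat\sigma(0)=\p_\xi\hat\sigma(0)=\p_\xi^{2}\hat\sigma(0)=0.
\end{equation*}
After subtracting the corresponding Taylor polynomial near $\xi=0$, each dangerous contribution becomes locally integrable, and the Stein derivative of a $\si$-type factor remains bounded provided $\theta<1/2$, which is precisely the range imposed. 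This is the BO analogue of the computation in Lemma \ref{DF} used for the DGBO case.

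For the Duhamel term I would carry out the same Fourier-side computation on $\widehat{U(t-\tau)z(\tau)}$, but now the vanishing of $\hat z(\tau,\xi)$ at $\xi=0$ up to order two must be verified uniformly in $\tau\in[-T,T]$. The first-order vanishing $\hat z(\tau,0)=0$ is free from $z=\p_x(\cdots)$; for the next two derivatives one needs
\begin{equation*}
\int(u_1^{2}-u_2^{2})(\tau)\,dx=0,\qquad \int x\,(u_1^{2}-u_2^{2})(\tau)\,dx=0,\qquad \tau\in[-T,T].
\end{equation*}
The first equality is the conservation of $I_{2}$ given $\|\phi\|=\|\va\|$; the second is obtained by differentiating in $\tau$ and integrating by parts against $\h\p_x^{2}$, producing boundary contributions that are exactly the combinations $\int x\phi^{2}-\int x\va^{2}$ and $I_{3}(\phi)-I_{3}(\va)$, both set to zero by hypothesis. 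With these cancellations in hand, the same Stein-derivative argument applies pointwise in $\tau$, and an $L^{1}_{\tau}$ integration closes the estimate on $[-T,T]$.

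The main obstacle is the interaction between $D^{\theta}_\xi$ and the fact that $\p_\xi^{k}(\xi|\xi|)$ is a distribution of order $k-2$ at the origin for $k\ge3$. One must localise carefully with a smooth cutoff around $\xi=0$, replace $\hat\sigma$ and $\hat z(\tau,\cdot)$ by their second-order Taylor remainders in the singular region, and invoke Stein-type pointwise bounds on the complementary region; the threshold $\theta<1/2$ is sharp for these bounds on $\si$-type factors, which is the structural reason the theorem cannot currently go beyond $r+\theta=5+\tfrac12^{-}$.
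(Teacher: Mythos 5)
Your proposal takes a genuinely different route from the paper, and as written it has real gaps. The paper never returns to the Duhamel formula on the Fourier side for this theorem: it runs a weighted energy estimate, multiplying the equation for $w=u_1-u_2$ by $\langle x\rangle_N^{2+2\theta}x^8w$, rewriting $x^4\h\p_x^2w=\h\p_x^2(x^4w)-8\h\p_x(x^3w)+4\h(x^2w)$, controlling the commutators $[\langle x\rangle_N^{1+\theta};\h]$ by the Calder\'on estimate, using the boundedness of $\h$ on $L^2(|x|^{2\theta}dx)$ for $\theta\in(0,1/2)$ (\cite[Theorem 4]{FonsecaPonce}) applied to $x^2w$ and $x^3w$ --- whose zero-mean conditions come from \eqref{consxw} and \eqref{x2w} --- and closing with Gronwall. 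Your moment bookkeeping is correct: the hypotheses give $\hat\sigma(0)=\p_\xi\hat\sigma(0)=\p_\xi^2\hat\sigma(0)=0$, and via \eqref{Ic} and the conservation of $I_3$ one indeed gets $\int x(u_1^2-u_2^2)(\tau)\,dx=\int x(\phi^2-\varphi^2)\,dx+2\tau\big(I_3(\phi)-I_3(\varphi)\big)=0$ for all $\tau$; the threshold $\theta<1/2$ is also correctly identified as the range where $\mathcal D^\theta(\sgn(\xi)\chi)\in L^2$ (Proposition \ref{Dstein} with $\alpha=0$).

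Two points, however, are not routine details. First, the expansion \eqref{F6} that powers the DGBO proofs is valid only for $a>0$: the coefficients of the singular powers $|\xi|^{a-3}$, $|\xi|^{a-2}\sgn(\xi)$, $|\xi|^{a-1}$ all carry a factor of $a$, and at $a=0$ these terms are replaced by $\delta_0$, $\delta_0'$, $\delta_0''$ together with Leibniz products such as $\sgn(\xi)\cdot\delta_0$ that are not defined as distributions. Making the fifth-order $\xi$-differentiation of $e^{-it\xi|\xi|}\hat\sigma$ rigorous --- for instance by differencing rather than differentiating the $\sgn$ factors, as in \cite{FonsecaPonce,GermanPonce} --- is the heart of the matter, and your ``localise carefully with a smooth cutoff'' sentence leaves exactly this open. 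Second, to bound $\||x|^{5+\theta}z(\tau)\|$ in the Duhamel term you must split the weight between $w$ and $u_1,u_2$; since $u_1,u_2$ themselves persist only with decay $r<5/2$ (no zero-mean assumption on $\phi,\varphi$), the factor carrying most of the weight must be $w$, e.g.\ $\||x|^{5+\theta}\p_xw\,(u_1+u_2)\|\les\||x|^{1+\theta}(u_1+u_2)\|_{L^\infty}\,\||x|^{4}\p_xw\|$, and $\||x|^4\p_xw\|$ is finite only after interpolating against $\|\langle x\rangle^5w\|$. That requires the a priori conclusion of Theorem \ref{low5}, i.e.\ $w\in C([-T,T];Z_{s,5})$, which your argument never invokes; without this bootstrap the nonlinear estimate does not close.
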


The rest of this paper is organized as follows. In Section \ref{notation} is give some notation and the auxiliary results. The Section \ref{N} is devoted to the proof of Theorems \ref{n} and \ref{no}; here we divide the proof of Theorem \ref{no} is given in two parts, first we aboard the case $r=4$, and then the case $r=5$. In Section \ref{nolow2t} we proof the Theorems \ref{nolow} and \ref{2t}; here we used some similar estimates as in the proofs of Section \ref{N}, moreover the induction principle for the proof of Theorem \ref{nolow}. Finally, the Section \ref{NBO} is dedicated to the proof of Theorems \ref{low5} and \ref{low5theta}.

\section{Notation and Preliminaries}\label{notation}

In this section, we present some notation and auxiliary results used in our proofs. We use $c$ to denote the various positive constants that may appear in our arguments; we use a subscript to indicate the dependence on parameters. For positive numbers $a$ and $b$, we write $a \lesssim b$ if, there exists a constant $c$, such that $a\leq c b$. Let $\|\cdot\|_{L^2(\R)}$ the usual $L^2(\R)$ norm. In short, we denote the $L^2(\R)$ norm only as $\|\cdot\|$.  If $s\in \R$, we denote by $H^s:=H^s(\R)$ the  $L^2$-based Sobolev space, endowed with the norm $\|\cdot\|_{H^s}$. 
The Fourier transform of $f$ is defined by

$$\hat{f}(\xi)=\int_{\R} f(x) e^{-i \xi x}dx, \qquad \xi \in \R.$$

For any complex number $z$ and a real function $f$, the Bessel potential and Riesz potential are defined via their Fourier transforms, respectively as 

$$
\widehat{J^zf}(\xi)=(1+\xi^2)^{z/2}\hat{f}(\xi) \quad \mbox{and} \quad \widehat{D^zf}(\xi)=|\xi|^{z}\hat{f}(\xi).
$$

Let $r\in \R$. We define $L^2_{r}(\R)$ as the space all functions $f=f(x)$ satisfying
$$
\|f\|_{L^2_{r}}^2:= \int_{\R}(1+x^{2r})|f(x)|^2 dx<\infty.
$$
If $s, r\in \R$, the weighted Sobolev space is given by 

$$Z_{s,r}=H^s(\R)\cap L^2_r(\R),$$

 endowed with the norm $\|\cdot\|^2_{Z_{s,r}}=\|\cdot\|^2_{H^s}+\|\cdot\|^2_{L^2_{r}}.$

We also introduce the space of all zero mean value functions as

$$\dot{Z}_{s,r}=\{f\in Z_{s,r}:\hat{f}(0)=0\}.$$

Throughout this work, $\delta_x$ will denote the Dirac delta function, centered at $x\in \R$. 

In what follows, we describe some auxiliary results needed to obtain our main theorems.

Next, $L^{p}_{s}$ denotes the Sobolev space defined as $L^{p}_{s}:=(1-\Delta)^{-s/2}L^{p}(\R^d)$. Such spaces can be characterized by the Stein derivative of order $b$ as follows.

\begin{theorem}\label{stein}
	Let $b\in (0,1)$ and $2d/(d+2b)<p<\infty.$ Then $f\in L^{p}_{b}(\R^{d})$ if and only if
	\begin{itemize}
		\item [a)] $f\in L^{p}(\R^{d}),$ 
		\item [b)]
		$\mathcal{D}^{b}f(x):={\displaystyle \left (
			\int_{\R^{d}}\frac{|f(x)-f(y)|^{2}}{|x-y|^{d+2b}}dy\right)^{1/2}} \in
		L^{p}(\R^{d}),$ with
		\begin{equation}\label{equiv}
		\|f\|_{b,p}:=\|J^{b}f\|_{p}\simeq \|f\|_{p}+\|D^{b}f\|_{p}\simeq \|f\|_{p}+\|\mathcal{D}^{b}f\|_{p}.
		\end{equation}
	\end{itemize}
\end{theorem}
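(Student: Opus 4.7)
The plan is to split the equivalence \eqref{equiv} into the Bessel-versus-Riesz comparison $\|J^b f\|_p \simeq \|f\|_p + \|D^b f\|_p$ and the deeper Stein comparison $\|f\|_p + \|D^b f\|_p \simeq \|f\|_p + \|\mathcal D^b f\|_p$, and attack the two pieces with different tools.

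The first equivalence I would reduce to Fourier multiplier theory. The ratios
$$
m_1(\xi) = \frac{(1+|\xi|^2)^{b/2}}{1+|\xi|^b}, \qquad m_2(\xi) = \frac{1}{m_1(\xi)}
$$
are smooth on $\R^d \setminus \{0\}$ and satisfy the Mihlin-H\"ormander estimate $|\partial^\alpha m_j(\xi)| \lesssim |\xi|^{-|\alpha|}$ for every multi-index $\alpha$, so the associated multiplier operators are bounded on $L^p(\R^d)$ for every $1 < p < \infty$. That gives $\|J^b f\|_p \simeq \|f\|_p + \|D^b f\|_p$ at once, without any restriction of the form $p > 2d/(d+2b)$ being used.

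For the Stein comparison I would argue in two directions. For $\|\mathcal D^b f\|_p \lesssim \|D^b f\|_p$, I would write $f$ as the Riesz potential $f = c\, I_b(D^b f)$ (up to a low-frequency correction absorbed by $\|f\|_p$), so that
$$
f(x) - f(y) = \int \bigl(K_b(x-z) - K_b(y-z)\bigr)\, D^b f(z)\, dz
$$
with $K_b(x) \sim |x|^{b-d}$ the Riesz kernel. Plugging this into the definition of $\mathcal D^b f(x)$ realises $\mathcal D^b$ as a vector-valued singular integral acting on $D^b f$ with values in the Hilbert space $L^2(|y|^{-d-2b}\,dy)$; routine size and smoothness bounds on $K_b(x-z) - K_b(y-z)$ place us in position to invoke vector-valued Calder\'on-Zygmund theory. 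For the reverse inequality I would start from the pointwise representation
$$
D^b f(x) = c_{d,b} \int_{\R^d} \frac{f(x) - f(x-y)}{|y|^{d+b}}\, dy
$$
(read in the principal-value sense), split $\R^d$ into dyadic annuli $|y| \sim 2^k$, and apply Cauchy-Schwarz on each annulus with the weight decomposition $|y|^{-(d+2b)/2} \cdot |y|^{-d/2 + b/2}$. Summing geometrically, contributions from large $k$ are controlled by the Hardy-Littlewood maximal function of $f$ and contributions from small $k$ are dominated pointwise by $\mathcal D^b f(x)$, yielding $\|D^b f\|_p \lesssim \|\mathcal D^b f\|_p + \|f\|_p$.

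The main obstacle is the low endpoint $p > 2d/(d+2b)$. Below that threshold the vector-valued $L^p(\ell^2)$ bounds needed for the annular square-function step fail, and the Cauchy-Schwarz splitting no longer closes. Verifying that this threshold is precisely what makes the Hilbert-valued Calder\'on-Zygmund step and the maximal-function tail estimate simultaneously succeed, and checking that the low-frequency correction in $f = c\, I_b(D^b f)$ collapses cleanly into $\|f\|_p$ without reintroducing a fractional-derivative norm, will be the most delicate part of the write-up; at this step Stein's original approach via the $g_\lambda^*$-function, starting from the exact $p=2$ Plancherel identity $\|D^b f\|_2 \simeq \|\mathcal D^b f\|_2$ and interpolating, is the natural substitute if the annular argument runs into trouble near the endpoint.
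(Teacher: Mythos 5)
The paper does not prove this statement at all: its ``proof'' is the single line ``See Theorem 1 in \cite{Stein}'', so the result is imported as a black box and your proposal has to be measured against Stein's original argument. Your first layer (the Mikhlin--H\"ormander comparison of $J^{b}$ with $I+D^{b}$) is correct for $1<p<\infty$, and the overall two-layer architecture is the right one. But the second layer, which is the actual content of the theorem, has concrete holes. In the direction $\|D^{b}f\|_{p}\lesssim\|f\|_{p}+\|\mathcal{D}^{b}f\|_{p}$, Cauchy--Schwarz on the annulus $|y|\sim 2^{k}$ gives $\int_{|y|\sim 2^{k}}|f(x)-f(x-y)|\,|y|^{-d-b}dy\lesssim A_{k}(x)$ with $A_{k}(x)^{2}=\int_{|y|\sim 2^{k}}|f(x)-f(x-y)|^{2}|y|^{-d-2b}dy$, and all you control is $\sum_{k}A_{k}(x)^{2}=\mathcal{D}^{b}f(x)^{2}$. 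There is no geometric factor left over in $k$, so $\sum_{k<0}A_{k}(x)$ is \emph{not} pointwise dominated by $\mathcal{D}^{b}f(x)$ (take $A_{k}=\varepsilon$ for $-N\le k<0$ and you lose a factor $\sqrt{N}$). The claimed pointwise bound of $D^{b}f$ by $\mathcal{D}^{b}f$ plus a maximal function therefore does not follow, and indeed the known proofs of this direction are not pointwise: they go through the Poisson integral and Littlewood--Paley $g$-functions, or through duality.

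The other direction has a quieter but equally real problem: the $L^{2}(|h|^{-d-2b}dh)$-valued kernel $h\mapsto K_{b}(x-z)-K_{b}(x+h-z)$ does not satisfy the Calder\'on--Zygmund size bound you call routine. Its $H$-norm involves $\int_{|h|\sim|x-z|}|x+h-z|^{2(b-d)}|h|^{-d-2b}dh$, which diverges whenever $2b\le d$ (so always for $d\ge2$, and for $d=1$, $b\le 1/2$), because the Riesz kernel is not locally square integrable near its singularity. So the vector-valued CZ mechanism breaks before it starts, even though the operator itself is bounded (at $p=2$ this is just Plancherel). Finally, the stated range $p>2d/(d+2b)$ dips below $p=1$ exactly when $d=1$ and $b>1/2$, where neither Mikhlin nor scalar CZ theory applies; that endpoint is governed by the $g_{\lambda}^{*}$ inequality $\|g_{\lambda}^{*}f\|_{p}\lesssim\|f\|_{p}$ for $p>2/\lambda$ with $\lambda=1+2b/d$, which is precisely Stein's proof --- so your closing deferral to it concedes the whole theorem rather than patching one step. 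Since the paper only ever uses the result through \cite{Stein} (and in practice at $p=2$), the honest conclusion is that your sketch does not yet constitute an independent proof.
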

\begin{proof}
	See Theorem 1 in \cite{Stein}.
\end{proof}

The advantage in using $\mathcal{D}^{b}$ is that it is suitable when dealing with pointwise estimates, as we will see below. In addition, from  Fubini's theorem we have the product estimate (see \cite[Proposition 1]{NahasPonce})
\begin{equation}\label{Leib}
\|\mathcal{D}^{b}(fg)\|_{L^2(\R^d)} \leq \|f\mathcal{D}^{b}g\|_{L^2(\R^d)} + \|g\mathcal{D}^{b}f\|_{L^2(\R^d)}.
\end{equation}

We also recall the following.

\begin{lemma}\label{Leibnitz}
Let $b\in (0,1)$ and $h$ be a measurable function on $\R$ such that $h,h'\in L^{\infty}(\R)$. Then, for all $x\in \R$
\begin{equation}\label{Lei}
\mathcal{D}^b h(x)\lesssim \|h\|_{L^{\infty}(\R)}+\|h'\|_{L^\infty(\R)}.
\end{equation}
Moreover,
\begin{equation}\label{Leibh}
\|\mathcal{D}^{b}(h f)\|_{L^2(\R)} \leq \|\mathcal {D}^b h\|_{L^\infty(\R)} \|f\|_{L^2(\R)} + \|h\|_{L^\infty(\R)} \|\mathcal{D}^{b}f\|_{L^2(\R)}.
\end{equation}
\end{lemma}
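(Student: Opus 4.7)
The proof of \eqref{Lei} I would obtain by splitting the defining integral of $\mathcal{D}^b$ at the scale $|x-y|=1$. Starting from
$$\mathcal{D}^b h(x)^2 = \int_{|x-y|<1}\frac{|h(x)-h(y)|^{2}}{|x-y|^{1+2b}}dy + \int_{|x-y|\geq 1}\frac{|h(x)-h(y)|^{2}}{|x-y|^{1+2b}}dy,$$
on the near piece I use the mean value inequality $|h(x)-h(y)|\leq \|h'\|_{L^\infty}|x-y|$, reducing the integrand to $\|h'\|_{L^\infty}^{2}|x-y|^{1-2b}$, which is integrable on the unit interval exactly because $b<1$. On the far piece I use the crude bound $|h(x)-h(y)|\leq 2\|h\|_{L^\infty}$ and the tail integrability of $|x-y|^{-1-2b}$ at infinity. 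Summing and using $\sqrt{A+B}\leq \sqrt{A}+\sqrt{B}$ yields \eqref{Lei} with an implicit constant depending only on $b$.

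For the product estimate \eqref{Leibh}, I would expand via the algebraic identity
$$h(x)f(x)-h(y)f(y)=(h(x)-h(y))f(x)+h(y)(f(x)-f(y)),$$
and apply Minkowski's inequality inside the $L^2_y$-integral defining $\mathcal{D}^b(hf)(x)$. This gives the pointwise bound
$$\mathcal{D}^b(hf)(x)\leq |f(x)|\,\mathcal{D}^b h(x) + \left(\int_{\R}\frac{|h(y)|^2|f(x)-f(y)|^2}{|x-y|^{1+2b}}dy\right)^{1/2}\leq |f(x)|\,\mathcal{D}^b h(x) + \|h\|_{L^\infty}\,\mathcal{D}^b f(x).$$
Taking the $L^2(\R)$-norm in $x$ and pulling $\|\mathcal{D}^b h\|_{L^\infty}$ out of the first term produces \eqref{Leibh}.

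I do not anticipate any real obstacle here. The only judgment call is the choice of algebraic split: the decomposition above puts $f(x)$ next to the $h$-difference and $h(y)$ next to the $f$-difference, so the $L^\infty$ bound can be extracted uniformly in $y$ for the second piece while leaving a clean $|f(x)|$ under the outer $L^2_x$ integral; the opposite pairing would not allow either $\|\mathcal{D}^b h\|_{L^\infty}$ or $\|h\|_{L^\infty}$ to come out cleanly. The lemma is then essentially a one-dimensional refinement of the Fubini-based product estimate \eqref{Leib} in the regime where one factor has bounded Stein derivative.
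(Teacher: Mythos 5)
Your proof is correct. The paper itself gives no direct argument: it cites Lemma 2.7 of \cite{pastran} for \eqref{Lei} and obtains \eqref{Leibh} by applying the product estimate \eqref{Leib} together with H\"older's inequality; your splitting of the Stein integral at $|x-y|=1$ and your pointwise Minkowski argument are precisely the standard proofs underlying those two citations, so the approach is essentially the same.
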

\begin{proof}
Note that \eqref{Lei} follows from Lemma 2.7 in  \cite{pastran}, while \eqref{Leibh} is a consequence of \eqref{Leib}.
\end{proof}

In our proofs, we need to deal with several pointwise estimates in terms of the Stein derivative. To handly with this kind estimates, we start by introducing a cut-off function 
\begin{equation}\label{chi}
\chi\in C_0^\infty(\R) \ \mbox{such that}\ \mbox{supp}\ \chi\subset [-2,2] \ \mbox{ and} \ \chi\equiv1 \ \mbox{in}  \ (-1,1).
\end{equation}

The next result will be needed in the proof of Lemma \ref{DF1}.

\begin{proposition}\label{Dstein}
For any $\theta \in (0,1)$ and $\alpha >0,$ the function $\mathcal{D}^\theta (|\xi|^\alpha \chi (\xi))(\cdot)$ is continuous in $\eta \in \R-\{0\}$  with
$$\mathcal{D}^\theta (|\xi|^\alpha \chi(\xi))(\eta) \sim \left\{\begin{array}{lcc}
c|\eta|^{\alpha -\theta}+c_1,& \quad \alpha \not= \theta, |\eta|\ll 1, \\
c(-\ln |\eta|)^{1/2}, & \quad \alpha=\theta, |\eta|\ll 1,\\
\frac{c}{|\eta|^{1/2+\theta}}, & \quad  |\eta|\gg 1,
\end{array}\right.
$$
 in particular, one has that
\begin{equation}\label{Dstein4}
\mathcal{D}^\theta (|\xi|^\alpha \chi (\xi))\in L^{2}(\R) \ \mbox{if and only if} \ \theta< \alpha +1/2.
\end{equation}
In a similar fashion
\begin{equation}\label{Dstein1}
\mathcal{D}^\theta (|\xi|^\alpha \sgn(\xi) \chi (\xi))\in L^{2}(\R) \ \mbox{if and only if} \ \theta< \alpha +1/2.
\end{equation}
\end{proposition}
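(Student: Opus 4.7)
The plan is to unpack the Stein derivative at $\eta$ using its pointwise definition
\[
\mathcal{D}^\theta g(\eta)^2 = \int_{\R} \frac{|g(\eta)-g(\xi)|^2}{|\eta-\xi|^{1+2\theta}}\, d\xi
\]
applied to $g(\xi)=|\xi|^\alpha \chi(\xi)$ (and then to $g(\xi)=|\xi|^\alpha \sgn(\xi)\chi(\xi)$), and to extract the claimed asymptotics by a scaling argument near $\eta=0$ and a decay-at-infinity argument for $|\eta|\gg 1$. Continuity in $\eta\in\R\setminus\{0\}$ follows from dominated convergence once the integrand is bounded locally uniformly by an integrable majorant, which the scaling analysis will provide.

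For $|\eta|\ll 1$, say $0<\eta<1/4$, I will split the defining integral into the near piece $|\xi|<1/2$ (where $\chi(\xi)=\chi(\eta)=1$) and the far piece $|\xi|\geq 1/2$. In the near piece the integrand reduces to $\bigl||\eta|^\alpha-|\xi|^\alpha\bigr|^2/|\eta-\xi|^{1+2\theta}$; performing the change of variables $\xi=\eta\zeta$ converts this to
\[
|\eta|^{2\alpha-2\theta}\int_{|\zeta|<1/(2\eta)} \frac{\bigl|1-|\zeta|^\alpha\bigr|^2}{|1-\zeta|^{1+2\theta}}\, d\zeta .
\]
The rescaled integrand behaves like $|\zeta|^{2\alpha-1-2\theta}$ as $|\zeta|\to\infty$, so the integral converges when $\alpha\neq\theta$ and yields the factor $c|\eta|^{\alpha-\theta}$; when $\alpha=\theta$ the large-$\zeta$ integrand is $\sim|\zeta|^{-1}$ and produces the $\log(1/\eta)$ factor. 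The far piece is bounded by $\int_{|\xi|\geq 1/2}|\xi|^{2\alpha}|\xi|^{-1-2\theta}\,d\xi$ (uniformly in small $\eta$), giving the constant $c_1$. Combining the two pieces and taking square roots gives the stated asymptotics. For $|\eta|\gg 1$, $g(\eta)=0$, hence
\[
\mathcal{D}^\theta g(\eta)^2 = \int_{|\xi|\leq 2} \frac{|g(\xi)|^2}{|\eta-\xi|^{1+2\theta}}\, d\xi \sim \frac{\|g\|_{L^2}^2}{|\eta|^{1+2\theta}},
\]
which after extracting the square root yields the $|\eta|^{-1/2-\theta}$ behavior.

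For the sgn-version, the near-piece splits further according to whether $\xi$ and $\eta$ share sign. The same-sign contribution repeats the analysis above. In the opposite-sign region (say $\eta>0$, $\xi<0$), the numerator becomes $\bigl(|\eta|^\alpha+|\xi|^\alpha\bigr)^2$; the change of variables $\xi=-\eta\zeta$ with $\zeta>0$ gives
\[
|\eta|^{2\alpha-2\theta}\int_0^{2/\eta} \frac{(1+\zeta^\alpha)^2}{(1+\zeta)^{1+2\theta}}\, d\zeta.
\]
The integrand now has no cancellation at $\zeta=1$, but the $\zeta\to\infty$ tail behaves exactly like $\zeta^{2\alpha-1-2\theta}$ as before, so the same dichotomy between the $|\eta|^{\alpha-\theta}$ and the $c_1$ constant persists. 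The behavior for $|\eta|\gg 1$ is identical to the $\chi$ case because sgn affects only the sign of $g$, not the size of the integrand after $g(\eta)=0$.

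Finally, integrability: the only possible obstruction to $L^2(\R)$ membership is the near-origin singularity $|\eta|^{\alpha-\theta}$, which lies in $L^2(d\eta)$ near $0$ iff $2(\alpha-\theta)>-1$, i.e., $\theta<\alpha+\tfrac12$ (and the log case $\alpha=\theta$ is integrable, corresponding to $\theta<\alpha+\tfrac12$). The tail $|\eta|^{-1/2-\theta}$ is always in $L^2$ at infinity for $\theta>0$, and the constant piece is bounded on a set of finite measure. This gives both \eqref{Dstein4} and \eqref{Dstein1}. The main technical obstacle is showing that the various $\zeta$-integrals arising from the change of variables are genuinely finite and produce the claimed leading-order dependence on $\eta$ rather than a spurious additional power; this is controlled by carefully tracking the integrable majorant near $\zeta=1$ (where the apparent singularity of $|1-\zeta|^{-1-2\theta}$ is compensated by $\bigl|1-|\zeta|^\alpha\bigr|^2\sim(1-\zeta)^2$) and by truncating the $\zeta$-integral at $2/\eta$ as dictated by the support of $\chi$.
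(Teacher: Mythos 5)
Your argument is correct in substance, but note that the paper itself offers no proof of this proposition: it simply cites Proposition 2.9 of \cite{FLP1}, so what you have written is a self-contained reconstruction of the external argument rather than an alternative to anything in the present text. Your route --- unpacking $\mathcal{D}^\theta g(\eta)^2=\int|g(\eta)-g(\xi)|^2|\eta-\xi|^{-1-2\theta}d\xi$, splitting near/far relative to the support of $\chi$, and rescaling $\xi=\eta\zeta$ near the origin --- is exactly the standard computation behind the cited result, and it correctly delivers the two facts the paper actually uses, namely the $|\eta|^{\alpha-\theta}$ singularity at the origin (hence the $L^2$ criterion $\theta<\alpha+1/2$, where only the case $\alpha<\theta$ is at stake since otherwise the condition is automatic) and the $|\eta|^{-1/2-\theta}$ tail. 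One sentence deserves tightening: you write that the rescaled $\zeta$-integral ``converges when $\alpha\neq\theta$ and yields the factor $c|\eta|^{\alpha-\theta}$,'' but for $\alpha>\theta$ the untruncated integral diverges at infinity; it is precisely the truncation at $|\zeta|\lesssim 1/|\eta|$ (which you invoke later) that turns the divergent tail $\int^{1/(2\eta)}\zeta^{2\alpha-1-2\theta}d\zeta\sim\eta^{-(2\alpha-2\theta)}$ into the constant contribution after multiplication by $|\eta|^{2\alpha-2\theta}$ --- so in that regime the constant in $c|\eta|^{\alpha-\theta}+c_1$ comes from the near piece as well as the far piece, not from a convergent $\zeta$-integral. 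This is a bookkeeping slip, not a gap: the asymptotics, the continuity via dominated convergence, the lower bound needed for the ``only if'' direction, and the sign-split treatment of the $\sgn(\xi)$ variant (where positivity of all contributions means the opposite-sign region cannot improve integrability) all go through as you describe.
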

\begin{proof}
See Proposition 2.9 in \cite{FLP1}.
\end{proof}


For the proof of Theorems \ref{no} and \ref{2t}, the key ingredient is the following.

\begin{proposition}\label{DsteinL3}
If $\gamma \in (0,1/2)$ and $0<\epsilon<\gamma$ then
\begin{equation}\label{Dstein3}
\mathcal{D}^{\gamma-\epsilon} (|\xi|^{\gamma-1/2}\chi(\xi))\in L^{2}(\R).
\end{equation}
\end{proposition}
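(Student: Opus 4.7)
The proof follows the strategy used for Proposition \ref{Dstein} in \cite{FLP1}, adapted to the weakly singular exponent $\alpha = \gamma-1/2 \in (-1/2,0)$. Setting $f(\xi) = |\xi|^{\gamma-1/2}\chi(\xi)$, the plan is to estimate $\mathcal{D}^{\gamma-\epsilon}f(\eta)^2$ pointwise via the integral representation of the Stein derivative and then integrate over $\eta \in \R$, splitting into three regions: $|\eta| \geq 3$, $|\eta| \leq 1$, and $1 < |\eta| < 3$.

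In the region $|\eta| \geq 3$, the cutoff forces $f(\eta) = 0$, so
$$\mathcal{D}^{\gamma-\epsilon}f(\eta)^2 = \int_{-2}^{2}\frac{|\xi|^{2\gamma-1}\chi(\xi)^2}{|\eta-\xi|^{1+2(\gamma-\epsilon)}}\,d\xi \lesssim \frac{1}{|\eta|^{1+2(\gamma-\epsilon)}}\int_{-2}^{2}|\xi|^{2\gamma-1}\,d\xi \lesssim |\eta|^{-1-2(\gamma-\epsilon)},$$
where I used $|\eta - \xi| \gtrsim |\eta|$ and the integrability of $|\xi|^{2\gamma-1}$ near zero (granted by $\gamma > 0$). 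The resulting square is integrable over $|\eta|\geq 3$ because $1 + 2(\gamma-\epsilon) > 1$. On the intermediate annulus $1 < |\eta| < 3$, routine estimates (splitting the $\xi$-integration near and far from $\eta$, using that $f$ is $C^1$ away from the origin and bounded) deliver a uniform pointwise bound on $\mathcal{D}^{\gamma-\epsilon}f(\eta)$, which integrates trivially on a set of finite measure.

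The crucial region is $|\eta| \leq 1$. By symmetry I take $\eta > 0$, so $\chi(\eta) = 1$ and $f(\eta) = \eta^{\gamma-1/2}$. The substitution $\xi = \eta y$ produces the scaling identity
$$\mathcal{D}^{\gamma-\epsilon}f(\eta)^2 = \eta^{-1+2\epsilon}\int_\R \frac{\bigl|\,1 - |y|^{\gamma-1/2}\chi(\eta y)\,\bigr|^{2}}{|1-y|^{1+2(\gamma-\epsilon)}}\,dy.$$
The main obstacle will be to show that this $y$-integral is uniformly bounded in $\eta \in (0,1]$. I would dominate it by the limiting integral (obtained by formally setting $\chi \equiv 1$), namely $\int_\R |1 - |y|^{\gamma-1/2}|^{2}|1-y|^{-1-2(\gamma-\epsilon)}\,dy$, plus a small remainder from the tail $|\eta y| > 2$ (where $\chi(\eta y)=0$ and the denominator alone provides polynomial decay). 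Convergence of the limiting integral is checked at three points: at $y = 0$ the integrand behaves like $|y|^{2\gamma - 1}$, integrable because $\gamma > 0$; at $y = 1$, Taylor expansion gives the cancellation $|1 - |y|^{\gamma-1/2}|^{2} \sim (y-1)^2$, making the integrand behave like $|y-1|^{1-2(\gamma-\epsilon)}$, integrable because $\gamma - \epsilon < 1$; at infinity it decays like $|y|^{-1-2(\gamma-\epsilon)}$, integrable because $\gamma > \epsilon$. Consequently $\mathcal{D}^{\gamma-\epsilon}f(\eta)^2 \lesssim \eta^{-1+2\epsilon}$, and $\int_0^1 \eta^{-1+2\epsilon}\,d\eta$ is finite because $\epsilon > 0$. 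Thus the hypothesis $0 < \epsilon < \gamma < 1/2$ enters at two complementary points: $\epsilon > 0$ controls the global scaling factor $\eta^{-1+2\epsilon}$, while $\gamma > 0$ tames the singularity of $|y|^{\gamma-1/2}$ at the origin inside the scaling-invariant integrand.
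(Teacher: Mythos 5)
The paper does not actually prove this proposition; it simply cites Proposition 2.9 of \cite{ap1}, which in turn follows the pointwise/scaling analysis of Proposition 2.9 in \cite{FLP1}. Your argument is a correct self-contained proof along exactly those lines: the decomposition into $|\eta|$ large, intermediate, and small, the scaling substitution $\xi=\eta y$ producing the factor $\eta^{-1+2\epsilon}$, and the three convergence checks (at $y=0$ using $\gamma>0$, at $y=1$ using $\gamma-\epsilon<1$ via the Taylor cancellation, at infinity using $\gamma-\epsilon>0$) are all sound, and the final integration of $\eta^{-1+2\epsilon}$ over $(0,1)$ correctly isolates where $\epsilon>0$ is needed. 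The only place your write-up is loose is the claim that the scaled $y$-integral is dominated by the $\chi\equiv 1$ limiting integral plus a tail supported where $\chi(\eta y)=0$: in the transition zone $1<|\eta y|<2$ one has $0\le \chi(\eta y)\le 1$, and there $|1-|y|^{\gamma-1/2}\chi(\eta y)|$ actually \emph{exceeds} $|1-|y|^{\gamma-1/2}|$; moreover when $\eta$ is close to $1$ this zone reaches points $y$ close to $1$, where you must either invoke the smoothness of $\chi$ (so that $1-\chi(\eta y)=O(|y-1|)$ uniformly, preserving the needed cancellation) or, more simply, run the scaling argument only for $|\eta|\le 1/2$ and absorb $1/2\le|\eta|<3$ into the routine intermediate region. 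This is a presentational fix, not a gap in the method.
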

\begin{proof}
	See Proposition 2.9 in \cite{ap1}.
\end{proof}

The next result is necessary to obtain the Lemma \ref{DF}.

\begin{lemma}\label{Pontual1}
Let $b\in (0,1)$. For any $t>0$,
\begin{equation*}
\mathcal{D}^{b}(e^{-itx|x|^{1+a}})\les t^{b/(2+a)}+t^{b}|x|^{(1+a)b}.
\end{equation*}
\end{lemma}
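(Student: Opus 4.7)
Set $\phi(x)=x|x|^{1+a}$, so that $\phi'(x)=(2+a)|x|^{1+a}$, and write out the Stein derivative via the change of variables $y=x+h$:
\[
\mathcal{D}^{b}(e^{-it\phi})(x)^{2}=\int_{\R}\frac{|e^{-it\phi(x)}-e^{-it\phi(x+h)}|^{2}}{|h|^{1+2b}}\,dh.
\]
I would split this integral at $|h|=\delta$, with $\delta>0$ to be chosen depending on $(t,x)$. On the tail $|h|>\delta$ I use the trivial bound $|e^{iA}-e^{iB}|\le 2$, which contributes $\lesssim \delta^{-2b}$. On the near part $|h|\le\delta$ I combine $|e^{iA}-e^{iB}|\le|A-B|$ with the mean value theorem and the elementary inequality $|\phi'(x+\tau h)|\lesssim |x|^{1+a}+|h|^{1+a}$ to get $|\phi(x+h)-\phi(x)|\lesssim (|x|^{1+a}+|h|^{1+a})|h|$. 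Integrating yields
\[
\mathcal{D}^{b}(e^{-it\phi})(x)^{2}\lesssim t^{2}|x|^{2(1+a)}\delta^{2-2b}+t^{2}\delta^{4+2a-2b}+\delta^{-2b}.
\]

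Next I optimize by splitting into two regimes. In the regime $|x|\ge t^{-1/(2+a)}$ I choose $\delta=(t|x|^{1+a})^{-1}$; then $\delta^{-2b}=t^{2b}|x|^{2b(1+a)}$, the first term equals $(t|x|^{1+a})^{2b}$, and the middle term equals $(t|x|^{1+a})^{2b}(t|x|^{2+a})^{-2(1+a)}$, which is $\le (t|x|^{1+a})^{2b}$ precisely because $t|x|^{2+a}\ge 1$. This gives $\mathcal{D}^{b}(e^{-it\phi})(x)\lesssim t^{b}|x|^{(1+a)b}$. In the regime $|x|<t^{-1/(2+a)}$ I choose $\delta=t^{-1/(2+a)}$; the algebra gives $t^{2}\delta^{4+2a-2b}=\delta^{-2b}=t^{2b/(2+a)}$, and using $|x|^{2(1+a)}<t^{-2(1+a)/(2+a)}$ the remaining term is also bounded by $t^{2b/(2+a)}$. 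Thus $\mathcal{D}^{b}(e^{-it\phi})(x)\lesssim t^{b/(2+a)}$ in this regime. Combining the two regimes yields the claimed estimate.

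The routine part is the basic Lipschitz bound of $e^{i\cdot}$ and the one-variable integration. The only step requiring a little care is verifying that the mean value estimate holds uniformly up to $|h|\le\delta$ without an auxiliary restriction $|h|\le|x|/2$; the bound $|\phi'(x+\tau h)|\lesssim|x|^{1+a}+|h|^{1+a}$ avoids this, at the cost of producing the extra term $t^{2}\delta^{4+2a-2b}$, which is what forces the choice $\delta=t^{-1/(2+a)}$ in the small-$|x|$ regime. The main (small) obstacle is thus the simultaneous control of all three terms when optimizing $\delta$, which is handled by the two-case split above tied to the threshold $|x|\sim t^{-1/(2+a)}$.
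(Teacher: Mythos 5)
Your argument is correct: the splitting of the Stein-derivative integral at $|h|=\delta$, the mean value bound $|\phi(x+h)-\phi(x)|\lesssim(|x|^{1+a}+|h|^{1+a})|h|$, and the two-regime optimization at the threshold $|x|\sim t^{-1/(2+a)}$ all check out, and each exponent computation is right. The paper itself does not prove this lemma but defers to Proposition 2.7 of \cite{FLP1}, where the argument is of exactly this type (decompose the integral defining $\mathcal{D}^{b}$ and balance the near and far contributions), so your proposal is essentially the standard proof, written out self-contained.
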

\begin{proof}
See Proposition 2.7 in \cite{FLP1}.
\end{proof}

\begin{lemma}\label{interx}
Let $\alpha,b>0.$ Assume that $J^{\alpha}f\in L^{2}(\R)$ and
$\langle x \rangle^b f=(1+x^2)^{b/2}f(x)\in L^{2}(\R).$ Then, for any
$\beta \in (0,1)$,
\begin{equation}\label{inter1x}
\|J^{\alpha \beta}(\langle x \rangle^{(1-\beta)b}f)\|_{L^2}\leq c\|\langle  x
\rangle^{b}f\|_{L^2}^{1-\beta}\|J^{\alpha}f\|_{L^2}^{\beta}.
\end{equation}
\end{lemma}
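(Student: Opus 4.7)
The plan is to establish this inequality by complex interpolation, applying Stein's interpolation theorem for analytic families of operators in the strip $S = \{z\in\mathbb{C} : 0\le \Re z\le 1\}$. For a fixed Schwartz function $f$, consider the family of operators
\begin{equation*}
T_z f := J^{\alpha z}\bigl(\langle x\rangle^{(1-z)b} f\bigr), \qquad z\in S,
\end{equation*}
which, pairing against a Schwartz test function $g$, defines an analytic function $F(z)=\langle T_z f, g\rangle_{L^2}$ on the open strip, continuous and of admissible growth on $\overline{S}$. The target estimate \eqref{inter1x} is exactly the statement that $T_\beta$ is bounded on $L^2$ with the stated product bound on its norm, so it suffices to control $T_z$ on the two edges $\Re z =0$ and $\Re z = 1$.

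On the line $\Re z=0$, writing $z=iy$, the factor $\langle x\rangle^{(1-iy)b}=\langle x\rangle^{b}\langle x\rangle^{-iyb}$ has $|\langle x\rangle^{-iyb}|=1$, and the Fourier multiplier symbol of $J^{i\alpha y}$ is $(1+\xi^2)^{i\alpha y/2}$, also of modulus $1$. By Plancherel, this gives the clean endpoint bound
\begin{equation*}
\|T_{iy} f\|_{L^2}=\|\langle x\rangle^{b}\langle x\rangle^{-iyb}f\|_{L^2}=\|\langle x\rangle^{b}f\|_{L^2}.
\end{equation*}
On the line $\Re z=1$, writing $z=1+iy$, again $|(1+\xi^{2})^{\alpha(1+iy)/2}|=(1+\xi^{2})^{\alpha/2}$, so
\begin{equation*}
\|T_{1+iy}f\|_{L^2}=\|J^{\alpha(1+iy)}(\langle x\rangle^{-iyb}f)\|_{L^2}=\|J^{\alpha}(\langle x\rangle^{-iyb}f)\|_{L^2}.
\end{equation*}

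Once these two endpoint estimates are available, Stein's interpolation theorem applied at $z=\beta$ delivers the desired bound
\begin{equation*}
\|J^{\alpha\beta}(\langle x\rangle^{(1-\beta)b}f)\|_{L^2}\le c\,\|\langle x\rangle^{b}f\|_{L^2}^{1-\beta}\|J^{\alpha}f\|_{L^2}^{\beta},
\end{equation*}
provided one shows admissible growth of $F(z)$ in $|\Im z|$ as $|\Im z|\to\infty$.

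The main obstacle is precisely the bound on the right edge: one needs to control $\|J^{\alpha}(\langle x\rangle^{-iyb}f)\|_{L^2}$ by a constant of at most polynomial growth in $|y|$ times $\|J^{\alpha}f\|_{L^2}$. The operator of multiplication by $\langle x\rangle^{-iyb}=e^{-iyb\log\langle x\rangle}$ is an $L^2$ isometry, but one must verify that it maps $H^{\alpha}$ to itself with norm controlled by a polynomial in $y$. This follows by writing $\langle x\rangle^{-iyb}$ as a smooth symbol and estimating its derivatives inductively (each derivative producing at most a factor $(1+|y|)$), so that a standard commutator/para-product argument (or equivalently the Kato--Ponce/Leibniz rule for $J^{\alpha}$) gives $\|J^{\alpha}(\langle x\rangle^{-iyb} f)\|_{L^2}\lesssim (1+|y|)^{N}\|J^{\alpha}f\|_{L^2}$ for some $N=N(\alpha,b)$. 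This polynomial growth is permissible within Stein's interpolation theorem, so the interpolation closes and yields \eqref{inter1x}.
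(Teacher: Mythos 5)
Your proof is correct and follows essentially the same route as the paper's source: the paper does not prove this lemma itself but cites Lemma 4 of Nahas--Ponce, whose argument is exactly this Stein/three-lines interpolation in the strip, with the trivial bound on $\Re z=0$ and the unimodular multiplier $\langle x\rangle^{-iyb}$ handled on $H^{\alpha}$ on the edge $\Re z=1$. The step you defer to standard facts --- that multiplication by $\langle x\rangle^{-iyb}=e^{-iyb\log\langle x\rangle}$ maps $H^{\alpha}$ to itself with norm growing at most polynomially in $|y|$, which is what makes the interpolation admissible --- is indeed the only genuine technical content, and your sketch of it (derivative bounds on the symbol plus a Leibniz/commutator estimate) is sound.
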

\begin{proof}
See Lemma 4 in \cite{NahasPonce}. 
\end{proof}

The next result is useful to prove the Lemma \ref{DF1} below.

\begin{proposition}\label{Jota}
Let  $\varrho \in L^{\infty}(\R)$, with $\partial_x^k \varrho \in L^{2}(\R)$ for $k=1,2$. Then, for any $\theta\in (0,1)$, there exists a constant $c>0$, depending only on $\varrho$ and $\theta$, such that

\begin{equation}\label{Jotaf1}
\|J^\theta(\varrho f)\|_{L^2}\leq c \|J^\theta f\|_{L^2}.
\end{equation}
\end{proposition}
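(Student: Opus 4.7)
The plan is to reduce the inequality to Stein's characterization of $L^2_\theta$ in Theorem~\ref{stein} and then apply the pointwise/fractional Leibniz estimates already collected in Lemma~\ref{Leibnitz}. Concretely, by \eqref{equiv} it suffices to control
\[
\|\varrho f\|_{L^2}+\|\mathcal{D}^{\theta}(\varrho f)\|_{L^2}
\]
by $\|f\|_{L^2}+\|\mathcal{D}^{\theta}f\|_{L^2}\simeq \|J^\theta f\|_{L^2}$.

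First I would handle the trivial piece: $\|\varrho f\|_{L^2}\leq \|\varrho\|_{L^\infty}\|f\|_{L^2}$. For the fractional piece, I would invoke \eqref{Leibh} from Lemma~\ref{Leibnitz} to get
\[
\|\mathcal{D}^{\theta}(\varrho f)\|_{L^2}\leq \|\mathcal{D}^\theta \varrho\|_{L^\infty}\|f\|_{L^2}+\|\varrho\|_{L^\infty}\|\mathcal{D}^\theta f\|_{L^2}.
\]
The second summand is exactly of the desired form, so the whole proof reduces to bounding $\|\mathcal{D}^\theta \varrho\|_{L^\infty}$ by a constant depending only on $\varrho$.

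For that, I would apply \eqref{Lei} in Lemma~\ref{Leibnitz}, which gives the pointwise bound $\mathcal{D}^\theta \varrho(x)\lesssim \|\varrho\|_{L^\infty}+\|\varrho'\|_{L^\infty}$. The hypotheses supply $\varrho\in L^\infty$ directly, while $\varrho'\in L^\infty$ is obtained from the one–dimensional Sobolev embedding $H^1(\R)\hookrightarrow L^\infty(\R)$ applied to $\varrho'$: since $\partial_x\varrho,\partial_x^2\varrho\in L^2(\R)$, we have $\varrho'\in H^1(\R)\hookrightarrow L^\infty(\R)$. Combining everything,
\[
\|\mathcal{D}^\theta \varrho\|_{L^\infty}\lesssim \|\varrho\|_{L^\infty}+\|\partial_x\varrho\|_{L^2}+\|\partial_x^2\varrho\|_{L^2}=:c(\varrho),
\]
and therefore
\[
\|J^\theta(\varrho f)\|_{L^2}\lesssim (\|\varrho\|_{L^\infty}+c(\varrho))\bigl(\|f\|_{L^2}+\|\mathcal{D}^\theta f\|_{L^2}\bigr)\lesssim c(\varrho,\theta)\|J^\theta f\|_{L^2},
\]
which is the desired estimate.

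I do not expect any serious obstacle: the whole argument is a bookkeeping exercise that glues together Stein's equivalence \eqref{equiv}, the fractional Leibniz rule \eqref{Leibh}, the pointwise control \eqref{Lei}, and the one–dimensional Sobolev embedding. The only subtle point worth stating carefully is that the hypothesis only supplies $\partial_x\varrho\in L^2$ rather than $L^\infty$, so invoking $H^1(\R)\hookrightarrow L^\infty(\R)$ (which requires the second derivative hypothesis $\partial_x^2\varrho\in L^2$) is what justifies the use of \eqref{Lei}.
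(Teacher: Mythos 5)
Your argument is correct. The chain Stein equivalence \eqref{equiv} $\Rightarrow$ product rule \eqref{Leibh} $\Rightarrow$ pointwise bound \eqref{Lei} is sound, and the one point that needed care — that \eqref{Lei} requires $\varrho'\in L^\infty$ while the hypothesis only gives $\partial_x\varrho,\partial_x^2\varrho\in L^2$ — you handle correctly via $\varrho'\in H^1(\R)\hookrightarrow L^\infty(\R)$, so $\|\varrho'\|_{L^\infty}\lesssim\|\partial_x\varrho\|_{L^2}+\|\partial_x^2\varrho\|_{L^2}$. The route is, however, not the paper's: the paper offers no proof at all and simply defers to Propositions 2.4 and 2.5 of \cite{FLP1}, where the estimate is obtained in that reference by a somewhat different bookkeeping (treating the endpoint cases and the Stein-derivative characterization there). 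What your version buys is self-containedness: every ingredient you invoke (Theorem \ref{stein}, Lemma \ref{Leibnitz}) is already stated in Section \ref{notation} of this paper, so the proposition could be proved in situ without the external citation. One cosmetic remark: when applying \eqref{equiv} to $\varrho f$ you are implicitly using the ``only if'' direction of Theorem \ref{stein} (membership in $L^2$ together with $\mathcal{D}^\theta(\varrho f)\in L^2$ implies $\varrho f\in L^2_\theta$ with comparable norm); it would be worth saying so explicitly, but it is exactly what that theorem asserts, so there is no gap.
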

\begin{proof}
See Propositions 2.4 and 2.5 in \cite{FLP1}.
\end{proof}

The integral equation associated to the IVP \eqref{gbo} is given by
\begin{equation}\label{inte}
u(t)=U(t)\phi-\int_0^t U(t-\tau)\vartheta(\tau)d\tau,
\end{equation} 
where $\vartheta(\tau):=\frac{1}{2}\p_x u^2$ and $U(t)\phi:=(e^{-i\xi|\xi|^{a+1}}\hat{\phi}(\xi))^{\vee}$.

The arguments of our proofs are based on derivatives with respect to $\xi$-variable of the function
\begin{equation}\label{psidef}
 \psi(\xi,t):=e^{-it\xi|\xi|^{1+a}}.
\end{equation} 

Hence, after several computations we conclude 

\begin{equation}
\begin{split}\label{F2} 
\p_\xi(\psi \hat{f})=&\;\psi \Big \{-it(2+a)|\xi|^{1+a}\hat f+\p_\xi \hat f\Big\},
\end{split}
\end{equation}

\begin{equation}
\begin{split}\label{F3} 
\p_\xi^{2}(\psi \hat{f})=&\;\psi \Big \{\big[-it(2+a)(1+a)|\xi|^a\s-t^2(2+a)^2|\xi|^{2(1+a)}\big]\hat{f}+\\
&-2it(2+a)|\xi|^{1+a}\p_\xi \hat{f}+\p_\xi^2 \hat f\Big\},
\end{split}
\end{equation}

\begin{equation}
\begin{split}\label{F4} 
\p_\xi^{3}(\psi \hat{f})=&\;\psi \Big \{(2+a)t\big[-ia(1+a)|\xi|^{a-1}-3t(2+a)(1+a)|\xi|^{2a+1}\s+i(2+a)^2 t^2|\xi|^{3(1+a)}\big]\hat{f}+\\
&+3\big[-3it(2+a)(1+a)|\xi|^{a}\s-3t^2(2+a)^2|\xi|^{2(1+a)}\big]\p_\xi \hat{f}+\\
&+3\big[-3it(2+a)|\xi|^{1+a}\big]\p_\xi^2 \hat{f}+\p_\xi^3 \hat f\Big\},
\end{split}
\end{equation}

\begin{equation}
\begin{split}\label{F5} 
\p_\xi^{4}(\psi \hat{f})=&\;\psi \Big \{\big[-2(2+a)(a^2-1)a i t|\xi|^{a-2}\s-(2+a)^2(1+a)(3+7a)t^2|\xi|^{2a}+\\
&+6i(1+a)(2+a)^3 t^3\si|\xi|^{2+3a}+t^4 (2+a)^4 |\xi|^{4(1+a)}\big]\hat{f}+4\big[it^3 (2+a)^3 |\xi|^{3(1+a)}\\
&-(2+a)(1+a)ait|\xi|^{a-1}-3t^2(2+a)^2(1+a)\sgn(\xi)|\xi|^{1+2a}\big]\p_\xi \hat{f}\\
&+6\big[-it(2+a)(1+a)\sgn(\xi)|\xi|^a  -t^2 (2+a)^2 |\xi|^{2(1+a)}\big]\p_\xi^2 \hat{f} \\
&-4it(2+a)|\xi|^{1+a}\p_\xi^3 \hat{f} +\p_\xi^4 \hat{f}\Big\}\\
=:&\;A_1+ \cdots + A_{11},
\end{split}
\end{equation}
and
\begin{equation}
\begin{split}\label{F6} 
\p_\xi^{5}(\psi\hat{f})=&\;\psi \Big \{\big[-(a^2-4)(a^2-1)ait|\xi|^{a-3}-5a(2+a)^2(1+a)(3a+1)t^2\s|\xi|^{2a-1}+\\
&+(1+a)\big(6(2+3a)+(2+a)^3\big)it^3|\xi|^{1+3a}+4t^4 (a+1)(2+a)^4 \s|\xi|^{4a+3}+\\
&+6(2+a)^4(1+a)t^4\s|\xi|^{4a+2}-(2+a)^5it^5|\xi|^{5(a+1)}\big]\hat{f}+\\
&+5\big[-2(2+a)(a^2-1)ait\s |\xi|^{a-2}-(2+a)^2(1+a)(3+7a)t^2|\xi|^{2a}+\\
&+6i(1+a)(2+a)^3 t^3\s|\xi|^{2+3a}+t^4 (2+a)^4|\xi|^{4(a+1)}\big]\p_\xi \hat{f}\\
&+10\big[-(2+a)(1+a)ait|\xi|^{a-1}  -3t^2(2+a)^2(1+a)\s |\xi|^{2a+1}+\\
&+it^3 (2+a)^3|\xi|^{3(a+1)}\big]\p_\xi^2 \hat{f}+10\big[-it(2+a)(1+a)\s|\xi|^a-t^2 (2+a)^2|\xi|^{2(1+a)}\big]\p_\xi^3 \hat{f}\\
&-5it(2+a)|\xi|^{1+a}\p_\xi^4 \hat{f}+\p_\xi^5 \hat{f}\Big\}\\
=:&\;B_1+ \cdots + B_{18}.
\end{split}
\end{equation}

Note that $A_j$ and $B_k$ depends on $\xi,t$ and $\hat{f}$, that is, $A_j=A_j(\xi,t,\hat{f})$ and $B_k=B_k(\xi,t,\hat{f})$, where $j=1,...,11$ and $k=1,...,18$.

The next three results below are useful to prove the Theorems \ref{no}--\ref{nolow}.

\begin{lemma}\label{DF}Let $\psi$ be as in \eqref{psidef}. For all $\theta \in (0,1)$ and $t>0$,
\begin{equation*}
\|\Dt(\psi \hat{f})\|\lesssim \rho(t)\big(\|J^{(1+a)\theta}f\|+\||x|^\theta f\|\big),
\end{equation*}
where $\rho(t)=1+t^\theta+t^{\frac{\theta}{2+a}}$.
\end{lemma}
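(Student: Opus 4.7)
The plan is to combine the product rule for the Stein derivative (inequality \eqref{Leib}) with the pointwise bound on $\mathcal{D}^\theta \psi$ supplied by Lemma \ref{Pontual1}, and then rewrite the remaining piece via Plancherel in physical variables.

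First I would apply \eqref{Leib} with $f=\psi(\cdot,t)$ and $g=\hat f$ to get
\begin{equation*}
\|\Dt(\psi\hat f)\|_{L^2_\xi}\le \|\hat f\,\Dt\psi\|_{L^2_\xi}+\|\psi\,\Dt\hat f\|_{L^2_\xi}=: I+II.
\end{equation*}
For $I$, Lemma \ref{Pontual1} (applied in the variable $\xi$ to $\psi(\xi,t)=e^{-it\xi|\xi|^{1+a}}$) yields the pointwise estimate
\begin{equation*}
\Dt\psi(\xi,t)\lesssim t^{\theta/(2+a)}+t^{\theta}|\xi|^{(1+a)\theta},
\end{equation*}
and therefore
\begin{equation*}
I\lesssim t^{\theta/(2+a)}\|\hat f\|_{L^2}+t^{\theta}\big\||\xi|^{(1+a)\theta}\hat f\big\|_{L^2}\lesssim \bigl(t^{\theta/(2+a)}+t^{\theta}\bigr)\|J^{(1+a)\theta}f\|_{L^2},
\end{equation*}
where in the last step I use Plancherel together with the trivial bound $\|f\|+\|D^{(1+a)\theta}f\|\lesssim \|J^{(1+a)\theta}f\|$.

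For $II$ I would use $|\psi|\equiv 1$ so that $II=\|\Dt\hat f\|_{L^2_\xi}$. By Theorem \ref{stein} with $p=2$ and $d=1$,
\begin{equation*}
\|\Dt\hat f\|_{L^2_\xi}\lesssim \|J^\theta_\xi \hat f\|_{L^2_\xi}=\|\langle x\rangle^\theta f\|_{L^2_x}\lesssim \|f\|_{L^2}+\||x|^\theta f\|_{L^2},
\end{equation*}
the middle equality being Plancherel. Absorbing $\|f\|_{L^2}\lesssim \|J^{(1+a)\theta}f\|$ and adding $I+II$ gives
\begin{equation*}
\|\Dt(\psi\hat f)\|_{L^2_\xi}\lesssim \bigl(1+t^{\theta/(2+a)}+t^{\theta}\bigr)\bigl(\|J^{(1+a)\theta}f\|+\||x|^\theta f\|\bigr)=\rho(t)\bigl(\|J^{(1+a)\theta}f\|+\||x|^\theta f\|\bigr),
\end{equation*}
as required. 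There is no genuine obstacle here; the only mildly delicate point is keeping track of the two different gains from Lemma \ref{Pontual1} (the low-frequency gain $t^{\theta/(2+a)}$ versus the large-$\xi$ gain $t^\theta|\xi|^{(1+a)\theta}$), which is exactly what forces the appearance of the $J^{(1+a)\theta}$-norm on the right-hand side.
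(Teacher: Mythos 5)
Your proof is correct and follows exactly the route the paper intends: the paper omits the details but points to Lemma \ref{Pontual1} and the analogous Lemma 2.13 of \cite{ap1}, whose argument is precisely this combination of the Stein--Leibniz inequality \eqref{Leib}, the pointwise bound $\Dt\psi\lesssim t^{\theta/(2+a)}+t^{\theta}|\xi|^{(1+a)\theta}$, unimodularity of $\psi$, and the norm equivalence of Theorem \ref{stein} together with Plancherel to convert $\|\Dt\hat f\|$ into $\|\langle x\rangle^{\theta}f\|$. No gaps.
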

\begin{proof}

The proof follows from Lemma \ref{Pontual1} and is similar to that of Lemma 2.13 in \cite{ap1}; so we omit the details.

\end{proof}

\begin{lemma}\label{DF1} Let $\theta, a \in (0,1)$ and $\beta\geq 0$. Then
\begin{equation}\label{DF11}
\||x|^\theta D^{a+\beta}f\|\lesssim \|J^{(a+\beta)(1-\frac{\theta}{r})^{-1}}f\|+\||x|^{r} f\|,
\end{equation}
where $r\geq 1$ and $\theta<a+1/2$. 

This result still holds if, in the left-hand side of the above inequality, we replace $f$ by $\h f$.
\end{lemma}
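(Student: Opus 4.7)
The plan is to pass to the Fourier side and handle the multiplier $|\xi|^{a+\beta}$ through a low/high frequency decomposition, with the Stein derivative $\mathcal{D}^\theta_\xi$ as the central tool. Since $|x|^\theta\le\langle x\rangle^\theta$, the standard identity $\|\langle x\rangle^\theta g\|_{L^2_x}\simeq \|J^\theta_\xi\hat g\|_{L^2_\xi}$ together with Theorem \ref{stein} reduces matters to bounding
\[
\||\xi|^{a+\beta}\hat f\|_{L^2}+\|\mathcal{D}^\theta_\xi(|\xi|^{a+\beta}\hat f)\|_{L^2}.
\]
The first summand is $\|D^{a+\beta}f\|$, trivially dominated by $\|J^{(a+\beta)r/(r-\theta)}f\|$ since $r/(r-\theta)\geq 1$.

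For the second summand, I would decompose $|\xi|^{a+\beta}=\chi(\xi)|\xi|^{a+\beta}+(1-\chi(\xi))|\xi|^{a+\beta}$ via the cut-off $\chi$ from \eqref{chi} and apply the Leibniz inequality \eqref{Leib}. On the low-frequency piece the key input is Proposition \ref{Dstein}, which, thanks to $\theta<a+1/2\leq(a+\beta)+1/2$, places $\mathcal{D}^\theta_\xi(\chi|\xi|^{a+\beta})$ in $L^2$. The remaining factor $\hat f$ is bounded in $L^\infty$ because $\langle x\rangle^r f\in L^2$ with $r\geq 1$ forces $f\in L^1$ via Cauchy--Schwarz, so $\|\hat f\|_{L^\infty}\lesssim\||x|^r f\|_{L^2}$. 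The complementary contribution $\chi(\xi)|\xi|^{a+\beta}\mathcal{D}^\theta\hat f$ is absorbed using the uniform boundedness of $\chi|\xi|^{a+\beta}$ together with the equivalence $\|\mathcal{D}^\theta\hat f\|_{L^2_\xi}\lesssim\|\langle x\rangle^\theta f\|_{L^2_x}\lesssim\||x|^r f\|_{L^2}$ coming again from Theorem \ref{stein}.

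On the high-frequency piece $(1-\chi(\xi))|\xi|^{a+\beta}$ the symbol is smooth and of order $a+\beta$, comparable to $\langle\xi\rangle^{a+\beta}$, so $(1-\chi(D))D^{a+\beta}f$ is controlled in $L^2$ by $J^{a+\beta}f$. The corresponding weighted estimate then follows from Lemma \ref{interx} applied with $\alpha=(a+\beta)r/(r-\theta)$, $b=r$ and interpolation parameter $(r-\theta)/r$: the lemma yields
\[
\|J^{a+\beta}(\langle x\rangle^\theta f)\|_{L^2}\lesssim\|\langle x\rangle^r f\|_{L^2}^{\theta/r}\,\|J^{(a+\beta)r/(r-\theta)}f\|_{L^2}^{(r-\theta)/r},
\]
and a Young inequality produces the two summands on the right-hand side of \eqref{DF11}. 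The Hilbert-transform version is handled identically upon replacing $|\xi|^{a+\beta}$ by $-i\sgn(\xi)|\xi|^{a+\beta}$ and invoking \eqref{Dstein1} in place of \eqref{Dstein4}, both valid under the same threshold $\theta<a+1/2$.

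The main obstacle is the low-frequency block: the singularity of $|\xi|^{a+\beta}$ at the origin is responsible for the hypothesis $\theta<a+1/2$, which coincides with the sharp $L^2$ range given by Proposition \ref{Dstein}. A secondary technical nuisance is the non-commutation of $\langle x\rangle^\theta$ with $J^{a+\beta}$ on the high-frequency side, but the commutator is of lower order and is absorbed into $\|J^{(a+\beta)r/(r-\theta)}f\|_{L^2}$ by a standard argument.
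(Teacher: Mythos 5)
Your proposal is correct and follows essentially the same route as the paper: Plancherel, the cut-off decomposition of $|\xi|^{a+\beta}$, the Leibniz rule \eqref{Leib} with Proposition \ref{Dstein} on the low-frequency block (which is where $\theta<a+1/2$ enters), and Lemma \ref{interx} plus Young on the high-frequency block to produce the exponent $(a+\beta)(1-\tfrac{\theta}{r})^{-1}$. The only cosmetic difference is that where you invoke a ``standard commutator argument'' to move the weight past the smooth high-frequency symbol, the paper makes this precise by writing $(1-\chi)|\xi|^{a+\beta}=\varrho(\xi)\langle\xi\rangle^{a+\beta}$ and applying Proposition \ref{Jota} before interpolating on the Fourier side.
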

\begin{proof}
	We point out that here and throughout the paper we will use Plancherel's identity without mentioning.
Let $\chi$ be as in \eqref{chi}. Then,
\begin{equation}
\begin{split}\label{d1}
\||x|^\theta D^{a+\beta}f\|&=\|D_\xi^\theta(|\xi|^{a+\beta}\hat f)\|\\
&\les \|\D(|\xi|^{a+\beta}\chi\hat f)\|+\|\D(|\xi|^{a+\beta}(1-\chi)\hat f)\|\\
                           &:=\bar B_1+\bar B_2.
\end{split}
\end{equation}
Since $\theta<a+1/2\leq a+\beta+1/2$, from  \eqref{Leib} and \eqref{Dstein4} it follows that 
\begin{equation}
\begin{split}\label{d2}
\bar B_1 \les \||\xi|^{a+\beta}\chi \|_{L^\infty}\|\Dt\hat f\|+\|\Dt(|\xi|^{a+\beta}\chi)\|\|\hat f\|_{L^\infty}\les \||x|^\theta f\|+\|\lanx f\|,
\end{split}
\end{equation}
where  we also used the Sobolev embedding.

For $\bar B_2$ we first observe that
$$
\bar B_2\lesssim \Big\|J_\xi^\theta\Big(\dfrac{|\xi|^{a+\beta}(1-\chi)}{\lan^{a+\beta}}\lan^{a+\beta}\hat f\Big)\Big\|.
$$
A straightforward calculation reveals the function
$$
\varrho(\xi):=\dfrac{|\xi|^{a+\beta}(1-\chi)}{\lan^{a+\beta}}
$$
satisfies the assumptions of Proposition \ref{Jota}. Thus, an application of Lemma \ref{interx} gives
\begin{equation}
\begin{split}\label{d3}
\bar B_2 &\leq \|J_\xi^\theta(\lan^{a+\beta}\hat f)\|\les \|J_\xi^r \hat f\|+\|\lan^{(a+\beta)(1-\frac{\theta}{r})^{-1}}\|\les \|\lanx^r f\|+\|J^{(a+\beta)(1-\frac{\theta}{r})^{-1}}f\|.
\end{split}
\end{equation}
Gathering together \eqref{d1}--\eqref{d3}, we obtain the result.

The proof with $\h f$ instead of $f$ on the left-hand side of \eqref{DF11}, follows in a similar way.
This finishes the proof.
\end{proof}

\begin{lemma}\label{DF2} Let $a \in (0,1)$ and $\beta\geq 0$. Then
\begin{equation*}
\|J^{(a+\beta)\mu}(x^{j}f)\|\lesssim \|J^{(a+\beta)(j+\mu)}f\|+\||x|^{j+\mu} f\|,
\end{equation*}
where $\mu>0$ and $j\in \{1,...,4\}$.
\end{lemma}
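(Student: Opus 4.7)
The plan is to derive this from Lemma \ref{interx} (the Nahas--Ponce interpolation inequality), Proposition \ref{Jota}, and Young's inequality, with a small bookkeeping step to pass between $x^j$ and $\langle x\rangle^j$.

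First I would apply Lemma \ref{interx} with the choices $\alpha=(a+\beta)(j+\mu)$, $b=j+\mu$, and $\beta_0=\mu/(j+\mu)\in(0,1)$, which are admissible since $a>0$, $\beta\geq0$, $j\geq1$, $\mu>0$. Note that $(1-\beta_0)b=j$ and $\alpha\beta_0=(a+\beta)\mu$, so the inequality reads
\begin{equation*}
\|J^{(a+\beta)\mu}(\langle x\rangle^{j} f)\|\leq c\,\|\langle x\rangle^{j+\mu}f\|^{j/(j+\mu)}\,\|J^{(a+\beta)(j+\mu)}f\|^{\mu/(j+\mu)}.
\end{equation*}
An application of Young's inequality $A^{\theta_1}B^{\theta_2}\leq A+B$ (for $\theta_1+\theta_2=1$, $\theta_i\geq0$) turns the right-hand side into a sum:
\begin{equation*}
\|J^{(a+\beta)\mu}(\langle x\rangle^{j} f)\|\lesssim \|\langle x\rangle^{j+\mu}f\|+\|J^{(a+\beta)(j+\mu)}f\|.
\end{equation*}

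Next I would trade $\langle x\rangle^{j}$ on the left for $x^{j}$. Writing $\varrho(x):=x^{j}/\langle x\rangle^{j}$, a direct computation shows $\varrho\in L^{\infty}(\R)$ and $\partial_x\varrho$, $\partial_x^2\varrho$ have polynomial decay at infinity, so $\varrho,\partial_x^k\varrho\in L^{\infty}\cap L^{2}(\R)$ for $k=1,2$; thus Proposition \ref{Jota} applies and yields
\begin{equation*}
\|J^{(a+\beta)\mu}(x^{j}f)\|=\|J^{(a+\beta)\mu}(\varrho\,\langle x\rangle^{j}f)\|\lesssim \|J^{(a+\beta)\mu}(\langle x\rangle^{j}f)\|.
\end{equation*}
Finally, $\|\langle x\rangle^{j+\mu}f\|\lesssim \|f\|+\||x|^{j+\mu}f\|$ and the Sobolev embedding $\|f\|\leq \|J^{(a+\beta)(j+\mu)}f\|$ merge the extra $\|f\|$ into the second term, giving the stated bound.

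The proof is essentially routine once the parameters in Lemma \ref{interx} are matched correctly; the only mildly delicate point is verifying that the cut-off-free multiplier $\varrho(x)=x^j/\langle x\rangle^{j}$ satisfies the hypotheses of Proposition \ref{Jota}, but this is immediate from the explicit formula $\partial_x\varrho=jx^{j-1}(1+x^2)^{-j/2-1}$ and its derivatives, which decay like $|x|^{-3}$ or faster at infinity.
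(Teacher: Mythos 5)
Your overall strategy is sound and genuinely different from the paper's: the paper works entirely on the Fourier side, writing $\|J^{(a+\beta)\mu}(x^jf)\|=\|\langle\xi\rangle^{(a+\beta)\mu}\partial_\xi^j\hat f\|$, distributing the $\xi$-derivatives onto $\langle\xi\rangle^{(a+\beta)\mu}\hat f$ by Leibniz, and then applying Lemma \ref{interx} in the $\xi$-variable to the term $\|J_\xi^j(\langle\xi\rangle^{(a+\beta)\mu}\hat f)\|$; you instead apply Lemma \ref{interx} directly in the physical variable to $\langle x\rangle^j f$ and then remove the discrepancy between $\langle x\rangle^j$ and $x^j$ by a multiplier bound. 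Your parameter bookkeeping in Lemma \ref{interx} ($\alpha=(a+\beta)(j+\mu)$, $b=j+\mu$, interpolation exponent $\mu/(j+\mu)$) is correct, and your argument has the advantage of treating all $j\in\{1,\dots,4\}$ uniformly in one line, whereas the paper carries out the computation only for $j=1$ and asserts the other cases are analogous.

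The gap is in your final step. Proposition \ref{Jota} is stated only for orders $\theta\in(0,1)$, but you invoke it at order $(a+\beta)\mu$, which the lemma being proved does not restrict to $(0,1)$ and which in the paper's actual applications does exceed $1$: for instance in \eqref{sp81} the lemma is used with $a+\beta=1+a$ and $\mu=\theta+\tfrac{a}{1+a}$, so $(a+\beta)\mu=(1+a)\theta+a$ can be close to $3$. For such orders the estimate $\|J^{s}(\varrho g)\|\lesssim\|J^{s}g\|$ with $\varrho=x^j/\langle x\rangle^{j}$ is still true --- $\varrho$ is smooth with all derivatives bounded and decaying, so one can prove the bound for integer $s$ by Leibniz and then interpolate, or appeal to a Kato--Ponce type product estimate --- but it does not follow from Proposition \ref{Jota} as cited. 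You should either supply that extension explicitly or switch to the exact Fourier-side identity $\widehat{x^jf}=i^j\partial_\xi^j\hat f$, which makes the $\langle x\rangle^{j}$ versus $x^{j}$ issue disappear, as in the paper's proof.
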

\begin{proof}
Assuming $j=1$ we deduce 
\begin{equation*}
\begin{split}
\|J^{(a+\beta)\mu}(xf)\|&=\|\lan^{(a+\beta)\mu}\p_\xi \hat f\|\\
                        &\les \|\lan^{(a+\beta)\mu-1}\hat f\|+\|J_\xi(\lan^{(a+\beta)\mu}\hat f)\|\\
                        &\les \|J^{(a+\beta)\mu-1}f\|+\|J_\xi^{1+\mu}\hat f\|+\|\lan^{(1+\mu)(a+\beta)}\hat f\|\\
                        &\les \|J^{(a+\beta)(1+\mu)}f\|+\||x|^{1+\mu}f\|, 
\end{split}
\end{equation*}
where we used Lemma \ref{interx}. 

For $j\in \{2,3,4\}$ the idea of the proof is the same above.

This ends the proof.
\end{proof}

In what follows, we will give the auxiliary results necessary to obtain our results for the Benjamin-Ono equation. We start by recalling the definition of the truncated weights $\langle x\rangle_N$, $N\in \Z^{+}$, which are given by
\begin{eqnarray*}
\langle x\rangle_N:=\left\{\begin{array} {lccc}
\langle x \rangle \ \mathrm{if} \  |x|\leq N,\\
2N \ \mathrm{if} \ |x|\geq 3N,
\end{array} \right.
\end{eqnarray*}
where $\langle x \rangle = (1+x^2)^{1/2}$. Also, we assume that $\langle x\rangle_N$
is smooth and non-decreasing in $|x|$ with $\langle x\rangle_N'(x)\leq 1,$ for any
$x\geq 0$, and there exists a constant $c$ independent of $N$ such that
$|\langle x\rangle_N''(x)|\leq c \partial_x^{2}\langle x \rangle.$

To obtain the proof of Theorem \ref{low5}, we need of the following auxiliary result. The Proposition 1 in \cite{flores} tells us that 

\begin{equation}\label{Ic}
\frac{d}{dt}\int x u^2(x,t)dx=2I_3(u)
\end{equation}
and
\begin{equation}\label{IIc}
\frac{d}{dt}\int x^2 u(x,t)dx=\int x u^2(x,t)dx,
\end{equation}
where $u=u(x,t)$ is solution of the IVP \eqref{bo} and $I_3$ is given in \eqref{I3}.

To obtain the proof of Theorem \ref{low5theta} we need of the following.
\begin{remark}\label{remark2}
Let $u_1$ and $u_2$ solutions of the IVP \eqref{bo}, with initial data $\phi$ and $\va$, respectively.
If $w=u_1-u_2$, then

\begin{equation}\label{x2w}
\int x^2 w(x,t)dx=t^2(I_3(\phi)-I_3(\va))+t\int x(\phi^2(x)-\va^2(x))dx+\int x^2 (\phi(x)-\va(x))dx,
\end{equation} 

for all $t$ in which the solutions $u_1$ and $u_2$ there exist.
\end{remark}
\begin{proof}
The identity \eqref{IIc} implies

\begin{equation*}
\frac{d}{dt}\int x^2 w(x,t)dx=\int x \big (u_1^2 (x,t) -u_2^2 (x,t)\big)dx.
\end{equation*}

Taking the derivative in the last equality, using \eqref{Ic} and the conservation law \eqref{I3} we conclude that

\begin{equation}\label{d2I3}
\frac{d^2}{dt^2}\int x^2 w(x,t)dx=2t(I_3(\phi)-I_3(\varphi)).
\end{equation} 
Hence, identities \eqref{Ic} and \eqref{d2I3} yields us the desired result. 

This ends the proof.
\end{proof}

\section{Proof of Theorems \ref{n}--\ref{no}} \label{N}

The results that we give in this section shows us that the kind of uniqueness properties for the KdV equation (\cite{EKPV}) and Schr\"odinger equation (\cite{EKPV1}) doesn't valid for the DGBO equation. As we already mentioned in the introduction, the objective of the Theorems \ref{n} and \ref{no} is to get a negative answer to the question \eqref{uniq3}.

\begin{proof}[Proof of Theorem \ref{n}]
First we observe that	since $5/2+a^{-}<4$ we have  $\phi,\varphi\in Z_{s,5/2+a^{-}}$. Hence, 
from Theorem A (part (iii)) there exists $T>0$ such that $u_1,u_2 \in C([-T,T];Z_{s,5/2+a^{-}})$. Consequently, the constant
$$N:=\sup_{[-T,T]}\{\|u_1(t)\|_{H^{4(1+a)+1}}+\|u_2(t)\|_{H^{4(1+a)+1}}+\||x|^{\frac52}u_1(t)\|+\||x|^{\frac52}u_2(t)\|\},$$
is finite. 
 
By multiplying \eqref{int} by $x^4$ and using the Fourier transform we obtain
\begin{equation}\label{intxi1}
\begin{split}
\p_\xi^4\widehat{w(t)}=& \p_\xi^4(\psi(\xi,t)\ha)-\int_0^t  \p_\xi^4(\psi(\xi,t-\tau)\hat{z}(\tau))d\tau.
\end{split}
\end{equation} 
The idea now is to show that the right-hand side of \eqref{intxi1} is finite. Using \eqref{F5} with $\sigma$ instead of $f$, the first term on the right-hand side of \eqref{intxi1} can be estimated as 
\begin{equation}
\begin{split}\label{x4}
\|\p_\xi^4(\psi(\xi,t)\ha)\|\les & \ \||\xi|^{a-2} \ha\|+\||\xi|^{2a}\ha\|+\||\xi|^{3a+2}\ha\|++\||\xi|^{4(a+1)} \ha\|+\||\xi|^{3(a+1)} \p_\xi \ha\|\\
&+\||\xi|^{a-1}\p_\xi \ha\|++\||\xi|^{2a+1}\p_\xi \ha\|+\||\xi|^{a}\p_\xi^2\ha\|+\||\xi|^{2(1+a)} \p_\xi^2 \ha\|+\\
&+\||\xi|^{1+a} \p_\xi^3 \ha\|+\|\p_\xi^4 \ha\|\\
=:& \ C_1+\cdots+C_{11}.
\end{split}
\end{equation}

Let us estimate some of the terms $C_j$ above. Note that \eqref{v} and \eqref{v1} are equivalent to $\ha(0)=0$ and $ \p_\xi \ha(0)=0$, respectively. So, from Taylor's expansion with integral remainder we may write
\begin{equation}\label{taylor0}
\hat{\sigma}(\xi)=\int_0^\xi (\xi-\zeta)\partial_\xi ^2 \hat{\sigma}(\zeta)d\zeta.
\end{equation}
Using \eqref{taylor0} and the Sobolev embedding, the term $C_1$ can be estimated as
\begin{equation}\label{C1}
\begin{split}
C_1 \leq& \ \|\chi|\xi|^{a-2} \int_0^\xi (\xi-\zeta)\partial_\xi ^2 \hat{\sigma}(\zeta)d\zeta\|+\|(1-\chi)|\xi|^{a-2} \ha\|\\
    \les& \ \|\p_\xi^2 \ha\|_{L^\infty}\|\chi|\xi|^{a-2}\xi^2\|+\|(1-\chi)|\xi|^{a-2}\|_{L^\infty}\| \ha\|\\
    \les& \ \|\lanx^{5/2+a^{-}}\sigma\|+\|\sigma\|.
\end{split}
\end{equation}
Note that in view of the support of $\chi$, the quantities $\|\chi|\xi|^{a-2}\xi^2\|$ and $\|(1-\chi)|\xi|^{a-2}\|_{L^\infty}$ are finite.  Also, the fundamental theorem of calculus and hypothesis \eqref{v1} implies that
\begin{equation}\label{tfc}
\p_\xi \ha (\xi)=\int_0^\xi \p_\xi^2 \ha (\zeta)d\zeta.
\end{equation} 
Hence, in view of \eqref{tfc},
\begin{equation}
\begin{split}
C_6 \leq& \ \|\chi|\xi|^{a-1} \p_\xi \ha\|+\|(1-\chi) |\xi|^{a-1} \p_\xi \ha\|\\
    \les& \ \|\p_\xi^2 \ha\|_{L^\infty}\|\chi|\xi|^{a-1}|\xi|\|+\|(1-\chi)|\xi|^{a-1}\|_{L^\infty}\| \p_\xi\ha\|\\
    \les & \ \|\lanx^{5/2+a^{-}}\sigma\|+\|\p_\xi\ha\|\\
    \les & \ \|\lanx^{5/2+a^{-}}\sigma\|.
\end{split}
\end{equation}
Using the inequality $|\langle\xi\rangle^{r}\p_\xi\ha|\lesssim |\p_\xi(\langle\xi\rangle^{r}\ha)|+|\langle\xi\rangle^{r-1}\ha|$, an application of Lemma \ref{interx} gives
\begin{equation}\label{C5}
\begin{split}
C_5 \leq \|J_\xi(\lan^{3(a+1)}\ha)\|+\|\lan^{3a+2}\ha\|\les \|J_\xi^4 \ha\|+\|\lan^{4(1+a)}\ha\|\les \|\lanx^4 \sigma\|+\|\sigma\|_{H^{4(1+a)}}.
  \end{split}
\end{equation}
The remaining terms $C_j$ may be estimated as in \eqref{C5}, so we omit the details. Therefore, we deduce
\begin{equation}\label{4sigma}
\|\p_\xi^4(\psi(\xi,t)\ha)\|\les \ \|\lanx^4 \sigma\|+\|\sigma\|_{H^{4(1+a)}}.
\end{equation}

To deal with the second term on the right-hand side of \eqref{intxi1} we observe that
\begin{equation}\label{lz}
\hat z(0,\tau)=0, \quad \tau \in [-T,T].
\end{equation}
In addition, by \eqref{norma} and the conservation law \eqref{norm},
\begin{equation}\label{l1z}
\begin{split}
\p_\xi \hat z(0,\tau)=\frac{i}{2}\big(\widehat{u_1^2}-\widehat{u_2^2}\big)(0,\tau)=\frac{i}{2}\int (u_1^2(x,\tau)-u_2^2(x,\tau))dx=\frac{i}{2}(\|\phi\|^2-\|\varphi\|^2)=0.
\end{split}
\end{equation}

In view of \eqref{lz} and \eqref{l1z} we may obtain a similar estimate as in \eqref{4sigma} but with   $\psi(\xi,t-\tau)\hat{z}(\tau)$ instead of $\psi(\xi,t)\ha$. Hence,
\begin{equation}
\begin{split}\label{x4uz}
\|\p_\xi^4(\psi(\xi,t-\tau)\hat{z}(\tau))\|&\les \ \|J^{4(1+a)}z(\tau)\|+\|x^4 z(\tau)\|\\
&\les \ \|J^{4(1+a)}(\p_x w(u_1+u_2))\|+\|x^{4}\p_x w(u_1+u_2)\|+\\
&\quad+ \|J^{4(1+a)}(w\p_x(u_1+u_2))\|+\|x^{4}w\p_x(u_1+u_2)\|\\
&=: G_1+\cdots+ G_4.
\end{split}
\end{equation}
For $G_1$, we use that $H^{4(1+a)}$ is Banach algebra to deduce
\begin{equation}
\begin{split}\label{G1}
G_1&\les \|J^{4(1+a)}\p_x w\|\|J^{4(1+a)}(u_1+u_2)\|\\
&\les \|J^{4(1+a)+1} w\|(\|J^{4(1+a)}u_1\|+\|J^{4(1+a)}u_2\|)\\
&\les N^2.
\end{split}
\end{equation}
Note that at this point we have indeed used that $s\geq 4(1+a)+1$. In a similar fashion,
\begin{equation}
\begin{split}\label{G3}
G_3&\les N^2.
\end{split}
\end{equation}
In addition, Sobolev's embedding and Lemma \ref{interx} (with $b=5/2$ and $\alpha=5$) imply
\begin{equation}
\begin{split}\label{G2}
G_2&\les \ \|x^2 (u_1+u_2)\|_{L^\infty_x}\|x^2\p_x w\|\\
   &\les \ \|J(\lanx^2 (u_1+u_2))\|\left(\|J(\lanx^2 w)\|+\|\langle x\rangle w\|\right)\\
   &\les \ \left(\|J^5 (u_1+u_2)\|+\|\lanx^{5/2}(u_1+u_2)\|\right)\left(\|J^5 w\|+\|\lanx^{5/2}w\|\right)\\
   &\les \ N^2
\end{split}
\end{equation}
and, similarly
\begin{equation}
\begin{split}\label{G4}
G_4\les \ N^2.
\end{split}
\end{equation}
Thus, by \eqref{x4uz}--\eqref{G4} we obtain
\begin{equation}
\begin{split}\label{X4U}
\|\p_\xi^4(\psi(\xi,t-\tau)\hat{z}(\tau))\|\les N^2,  \quad \tau \in [-T,T].
\end{split}
\end{equation}
Therefore, using \eqref{intxi1}, \eqref{4sigma} and \eqref{X4U} we conclude that for all $t\in [-T,T]$
\begin{equation*}
\begin{split}
\|x^4 w(t)\|&\les \ \|\lanx^4 \sigma\|+\|\sigma\|_{H^{4(1+a)}}+\int_0^t N^2d\tau\\
            &\les  \ \|\lanx^4 \phi\|+\|\lanx^4 \varphi\|+\|\phi\|_{H^{4(1+a)}}+\|\varphi\|_{H^{4(1+a)}}+tN^2.   
\end{split}
\end{equation*}
All terms on the right-hand side above are finite in view of our assumptions. 

The continuity of $t\in [-T,T]\mapsto w(t)\in Z_{s,4}$ run as in \cite{AP}.

This finishes the proof of the theorem.
\end{proof}

For the proof of the next Theorem, some of the key ingredients used are the Lemma \ref{DF1} and the Proposition \ref{DsteinL3}. Such Theorem shows that even assuming a decay of order $11/2^{-}$, we still obtain a negative answer to question \eqref{uniq3}.

\begin{proof}[Proof of Theorem \ref{no}]

First, we will deal with the proof of case $r=4$. Let $u_1$ and $u_2$ be the solutions of the IVP \eqref{gbo} with initial data $\phi$ and $\varphi$, respectively. Theorem A again allows to obtain $T>0$ such that $u_1,u_2 \in C([-T,T];Z_{s,5/2+a^{-}})$. First, we note that in view of $\theta<1/2+a$ it follows that
\begin{equation}\label{nu}
\nu:=2+\theta<5/2+a.
\end{equation}
Also, using Theorem \ref{n} we obtain $w=u_1-u_2\in C([-T,T];Z_{s,4})$, which implies that the constant
 $$M:=\sup_{[-T,T]}\{\|u_1(t)\|_{H^{s}}+\|u_2(t)\|_{H^{s}}+\||x|^{\nu}u_1(t)\|+\||x|^{\nu}u_2(t)\|+\|x^4 w(t)\|\},$$
 is finite.


Multiplying \eqref{int} by $|x|^{4+\theta}$ and using Plancherel's identity we get
\begin{equation}\label{intxi}
\begin{split}
D_\xi^\theta \p_\xi^4(\widehat{w(t)})=&D_\xi^\theta \p_\xi^4(\psi(\xi,t)\ha)-\int_0^t D_\xi^\theta \p_\xi^4(\psi(\xi,t-\tau)\hat{z}(\tau))d\tau.
\end{split}
\end{equation} 
In order to estimate the first term on the right-hand side of \eqref{intxi}, we use \eqref{F5} (with $\sigma$ instead of $f$)  to write
\begin{equation}
\begin{split}\label{dtheta}
\|D_\xi^\theta \p_\xi^4(\psi(\xi,t)\ha)\|\les\, & \|\D(\psi \s |\xi|^{a-2} \ha)\|+\|\D(\psi |\xi|^{2a}\ha)\|+\|\D(\psi |\xi|^{3a+2} \s\ha)\|+\\
&+\|\D(\psi |\xi|^{4(a+1)} \ha)\|+\|\D(\psi |\xi|^{3(a+1)} \p_\xi \ha)\|+\|\D(\psi |\xi|^{a-1}\p_\xi \ha)\|+\\
&+\|\D(\psi |\xi|^{2a+1} \s \p_\xi \ha)\|+\|\D(\psi |\xi|^{a}\s \p_\xi^2\ha)\|+\|\D(\psi |\xi|^{2(1+a)} \p_\xi^2 \ha)\|+\\
&+\|\D(\psi |\xi|^{1+a} \p_\xi^3 \ha)\|+\|\D(\psi \p_\xi^4 \ha)\|\\
=:& \ E_1+\cdots+E_{11}.
\end{split}
\end{equation}
First, we will deal with terms $E_1$ and $E_6$, that present more difficulties. From Lemma \ref{DF} we deduce
\begin{equation}
\begin{split}\label{splite1}
E_1 \les \|J^{(1+a)\theta}D^{a-2}\h\sigma\|+\||x|^{\theta}D^{a-2}\h \sigma\|:=E_{1,1}+E_{1,2}.
\end{split}
\end{equation}
Let us write
\begin{equation}
\begin{split}\label{splite2}
E_{1,2} \les \|D^{\theta}_\xi(\s |\xi|^{a-2}\chi \ha)\|+\|\D(\s |\xi|^{a-2}(1-\chi)\ha)\|:=E_{1,2}^1+E_{1,2}^2.
\end{split}
\end{equation}
 Using the assumption $\hat{\sigma}(0)=\p_\xi \hat{\sigma}(0)=0$ and Taylor's expansion with integral remainder we obtain
\begin{equation}\label{taylor}
\hat{\sigma}(\xi)=\int_0^\xi (\xi-\zeta)\partial_\xi ^2 \hat{\sigma}(\zeta)d\zeta.
\end{equation}
Thus,
\begin{equation}
\begin{split}\label{E121est}
E_{1,2}^1&=\Big\|D^{\theta}_\xi\big(|\xi|^{a-2}\s \chi \int_0^\xi (\xi-\zeta)\partial_\xi ^2 \hat{\sigma}(\zeta)d\zeta \big)\Big\|\\
&\les \Big\|D^{\theta}_\xi \big(|\xi|^{a-2}\s \chi \int_0^\xi (\xi-\zeta)(\partial_\xi ^2 \hat{\sigma}(\zeta)-\p_\xi^2 \hat{\sigma}(0))d\zeta \big)\Big\|+\\
&\quad+\Big\|D^{\theta}_\xi(|\xi|^{a-2}\s \chi \int_0^\xi (\xi-\zeta)\p_\xi^2 \hat{\sigma}(0)d\zeta )\Big\|\\
&=\Big\|D^{\theta}_\xi \big(\underbrace{|\xi|^{a-2}\s \chi \int_0^\xi (\xi-\zeta)\int_0^\zeta \p_\xi^3 \hat{\sigma}(\eta)d\eta d\zeta}_{L} \big)\Big\|+\frac12\Big\|D^{\theta}_\xi(|\xi|^{a-2}\s \chi \xi^2 \p_\xi^2 \hat{\sigma}(0))\Big\|\\
&\les  \|\D L\|+\|\p_\xi^2 \ha\|_{L^\infty_\xi}\|\D (|\xi|^a \s \chi)\|.
\end{split}
\end{equation}
To estimate the first term, we will use the inequality $ \|\D L\|\lesssim \|L\|^\theta\|\p_\xi L\|^{1-\theta}$. Now, using Sobolev embedding we get
\begin{equation*}
\begin{split}
\|L\|&\les \Big \| |\xi|^{a-2}\chi \int_0^\xi (\xi-\zeta)\int_0^\zeta|\p_\xi^3 \hat{\sigma}(\eta)|d\eta d\zeta\Big\|\\
&\les \|\p_\xi^3 \ha\|_{L^\infty_\xi}\||\xi|^{a-2}\chi |\xi|^3\|\\
&\les \|\lanx^{4+\theta} \sigma\|\| |\xi|^{a+1}\chi\|\\
&\les \|\lanx^{4+\theta} \sigma\|
\end{split}
\end{equation*}
and
\begin{equation*}
\begin{split}
\|\p_\xi L\|\les& \Big \| |\xi|^{a-3}\chi \int_0^\xi (\xi-\zeta)\int_0^\zeta\p_\xi^3 \hat{\sigma}(\eta)d\eta d\zeta\Big\|+ \Big \| |\xi|^{a-2}\p_\xi\chi \int_0^\xi (\xi-\zeta)\int_0^\zeta\p_\xi^3 \hat{\sigma}(\eta)d\eta d\zeta\Big\|\\
&+\Big \| |\xi|^{a-2}\chi \int_0^\xi \int_0^\zeta\p_\xi^3 \hat{\sigma}(\eta)d\eta d\zeta\Big\|\\\\
&\les \|\p_\xi^3 \ha\|_{L^\infty_\xi}\||\xi|^{a-3}\chi |\xi|^3\|+\|\p_\xi^3 \ha\|_{L^\infty_\xi}\||\xi|^{a-2}\p_\xi \chi |\xi|^3\|+\|\p_\xi^3 \ha\|_{L^\infty_\xi}\||\xi|^{a-2}\chi \xi^2\|\\
&\les \|\lanx^{4+\theta} \sigma\|,
\end{split}
\end{equation*}
where  we used that $\p_\xi \s=\delta_0$. The last two inequalities imply that $\|\D L\|\lesssim \|\lanx^{4+\theta} \sigma\|$.

For the second term on the right-hand side of \eqref{splite2}, we use interpolation to obtain
\begin{equation*} 
\begin{split}
E_{1,2}^2 &\les \||\xi|^{a-2}(1-\chi)\ha\|+\|\p_\xi \big(\sgn(\xi)|\xi|^{a-2}(1-\chi)\ha\big)\|\\
&\les \Big\|\frac{1-\chi}{\xi^2} \Big\|_{L^\infty}\||\xi|^a \ha\|+\Big\|\frac{1-\chi}{\xi^3}\Big\|_{L^\infty} \||\xi|^a\ha\|+\\
&\quad+\||\xi|^{a-2}\p_\xi \chi\|_{L^\infty}\| \ha\|+\Big\|\frac{1-\chi}{\xi^2}\Big\|_{L^\infty} \||\xi|^a\p_\xi\ha\|\\
&\les \|\sigma\|+\|D^a \sigma\|+\|D^a(x\sigma)\|\\
&\les \|J^2 \sigma\|+\|\lanx^2 \sigma\|.
\end{split}
\end{equation*}

In view of $\theta,a\in (0,1)$, we also have 
\begin{equation}
\begin{split}
E_{1,1}\les \|J^2 D^{a-2}\sigma\|\leq \|D^a \sigma\|+\|D^{a-2}\sigma\|\les \|J^a \sigma\|+\|D^{a-2}\sigma\|,
\end{split}
\end{equation}
where the term $\|D^{a-2}\sigma\|$ can be estimated as similar way to $E_{1,2}$.

Hence, by the above estimates

\begin{equation}\label{E1}
E_1\les \|J^2\sigma\|+\|\lanx^{4+\theta} \sigma\|.
\end{equation}

To estimate the term $E_6$ we can use Lemma \ref{DF} to obtain 
\begin{equation}
\begin{split}\label{E6plit}
E_{6}\les \|J^{(1+a)\theta} D^{a-1}(x\sigma)\|+\||x|^\theta D^{a-1}(x\sigma)\|:=E_{6,1}+E_{6,2}.
\end{split}
\end{equation}
In view of $\p_\xi \ha(0)=0$, the Taylor's theorem with integral remainder implies that

\begin{equation}\label{taylor2}
\p_\xi \ha(\xi)=\xi \p_\xi^2 \ha(0)+\int_0^\xi (\xi-\zeta)\p_\xi^3 \ha(\zeta)d\zeta.
\end{equation}
Thus
\begin{equation}
\begin{split}\label{estE622}
E_{6,2}&= \|\D(|\xi|^{a-1}\chi \p_\xi \ha)\|+\|\D(|\xi|^{a-1}(1-\chi) \p_\xi \ha)\|:=E_{6,2}^1+E_{6,2}^2,
\end{split}
\end{equation}
where by using \eqref{taylor2}
\begin{equation}
\begin{split}
E_{6,2}^1&\les \|\D(|\xi|^{a}\s\chi \p_\xi^2 \ha(0))\|+\Big\|\D\big(\underbrace{|\xi|^{a-1}\chi \int_0^\xi (\xi-\zeta)\p_\xi^3 \ha(\zeta)d\zeta}_{R}\big)\Big\|\\
&\les \li \|\Dta(|\xi|^a \s \chi)\|+\|\D R\|.
\end{split}
\end{equation}
The term $R \in H^1(\R)$, in fact

\begin{equation}
\begin{split}
\|R\|&\les \||\xi|^{a-1}\chi \xi^2\|\lii\\
&\les \||\xi|^{a+1}\chi\|\|\lanx^{4+\theta}\sigma\|,
\end{split}
\end{equation}
and
\begin{equation}
\begin{split}
\|\p_\xi R\|&\les\Big \| |\xi|^{a-2}\chi \int_0^\xi (\xi-\zeta)\p_\xi^3 \hat{\sigma}(\zeta)d\zeta\Big\|+ \Big \| |\xi|^{a-1}\p_\xi\chi \int_0^\xi (\xi-\zeta)\p_\xi^3 \hat{\sigma}(\zeta)d\zeta\Big\|+\\
&+\||\xi|^{a-1}\chi \int_0^\xi\p_\xi^3 \hat{\sigma}(\zeta)d\zeta\Big\|\\
&\les \lii\||\xi|^{a-2}\xi^2 \chi\|+\lii\||\xi|^{a-1}\p_\xi \chi \xi^2\|+\lii \||\xi|^{a-1}|\xi|\chi\|\\
&\les \|\lanx^{4+\theta}\sigma\|.
\end{split}
\end{equation}
Putting $h(\xi):=|\xi|^{a-1}(1-\chi)$, it follows that $h,\p_\xi h \in L^\infty_\xi$. Hence
\begin{equation}
\begin{split}
E_{6,2}^2 &\les \|\mathcal D_\xi^{\theta} h\|_{L^\infty_\xi}\|\p_\xi \ha\|+\|h\|_{L^\infty_\xi}\|\Dta \p_\xi \ha\|\\
&\les \|x\sigma\|+\||x|^{\theta+1}\sigma\|\\
&\les \|\lanx^{4+\theta}\sigma\|.
\end{split}
\end{equation}

Thus, from the last inequalities

\begin{equation}
\begin{split}\label{E62}
E_{6,2} \les\|\lanx^{4+\theta}\sigma\|.
\end{split}
\end{equation}

The first term on the right-hand side of \eqref{E6plit} can be estimated as

\begin{equation}
\begin{split}\label{E61}
E_{6,1} &\les \|J^2 D^{a-1}(x\sigma)\|+\|D^{a-1}(x\sigma)\|\\
        &\les \|J^{a+1}(x\sigma)\|+\|D^{a-1}(x\sigma)\|\\
        &\les \|J^{2(a+1)}\sigma\|+\|x^2 \sigma\|+\|D^{a-1}(x\sigma)\|,
\end{split}
\end{equation}
where above we used Lemma \eqref{DF2}(with $\mu=\beta=1$). 

The term $\|D^{a-1}(x\sigma)\|$ in \eqref{E61} can be deal by similar way to $E_{6,2}$.

Then, by \eqref{E6plit}, \eqref{E62} and \eqref{E61} we see that

\begin{equation}\label{E6}
E_{6} \les \|J^{2(a+1)}\sigma\|+\|\lanx^{4+\theta}\sigma\|.
\end{equation}

With respect to term $E_8$, from Lemma \ref{DF}

\begin{equation}
\begin{split}\label{sp8}
E_8 \les \|J^{(1+a)\theta}\h D^{a}(x^2\sigma)\|+\||x|^{\theta}\h D^{a}\h (x^2\sigma)\|:=E_{8,1}+E_{8,2}.
\end{split}
\end{equation}

By the Lemma \eqref{DF2}(with $\mu=\theta+\frac{a}{1+a}$) 

\begin{equation}
\begin{split}\label{sp81}
E_{8,1} \leq& \; \|J^{(1+a)\theta+a}(x^2\sigma)\|\\
= & \; \|J^{(1+a)(\theta+\frac{a}{1+a})}(x^2\sigma)\|\\
\les& \; \|J^{(2+\theta+\frac{a}{1+a})(1+a)}\sigma\|+\|\lanx^{2+\theta+\frac{a}{1+a}}\sigma\|\\
\les& \; \|J^{(3+\theta)(1+a)}\sigma\|+\|\lanx^{3+\theta}\sigma\|.
\end{split}
\end{equation}

Also using Lemma \eqref{DF1}(with $\beta=0$ and $r=2+\theta$)

\begin{equation}
\begin{split}\label{sp82}
E_{8,2} \leq& \; \|J^{a(1-\frac{\theta}{2+\theta})^{-1}}(x^2\sigma)\|+\||x|^{2+\theta}x^2 \sigma\|\\
= & \; \|J^{a(1+\frac{\theta}{2})}(x^2\sigma)\|+\|\lanx^{4+\theta}\sigma\|\\
\les& \; \|J^{a(3+\frac{\theta}{2})}\sigma\|+\|\lanx^{4+\theta}\sigma\|+\||x|^{3+\frac{\theta}{2}}\sigma\|\\
\les& \; \|J^{(3+\theta)(1+a)}\sigma\|+\|\lanx^{4+\theta}\sigma\|.
\end{split}
\end{equation}

About the terms $E_k$, $k\in \{1,...,11\}$, $k\neq 1,6,8$, using Lemmas \ref{DF}, \ref{DF1} and \ref{DF2} we conclude that

\begin{equation}
\begin{split}\label{sp82k}
E_{k} \les& \; \|J^{(1+a)(4+\theta)}\sigma\|+\||x|^{4+\theta}\sigma\|.
\end{split}
\end{equation}

Therefore

\begin{equation}
\begin{split}\label{sigma}
\||x|^{4+\theta}U(t)\sigma\|\les \; \|J^{(1+a)(4+\theta)}\sigma\|+\||x|^{4+\theta}\sigma\|.
\end{split}
\end{equation}







Thus, from \eqref{lz}, \eqref{l1z} it's possible to use \eqref{sigma}, with $z$ instead of $\sigma$ we obtain

\begin{equation}
\begin{split}\label{lf}
\||x|^{4+\theta}U(t-\tau)z(\tau)\|\les& \; \|J^{(1+a)(4+\theta)}z(\tau)\|+\||x|^{4+\theta}z(\tau)\|\\
\les& \; \|J^{(1+a)(4+\theta)}(\p_x w(u_1+u_2))\|+\||x|^{4+\theta}\p_x w(u_1+u_2)\|+\\
&+ \|J^{(1+a)(4+\theta)}(w\p_x(u_1+u_2))\|+\||x|^{4+\theta}w\p_x(u_1+u_2)\|\\
&:= \; F_1+\cdots+ F_4.
\end{split}
\end{equation}

Since $H^{(1+a)(4+\theta)}$ is Banach algebra it follows that
\begin{equation}
\begin{split}\label{j1}
F_1&\les \|J^{(1+a)(4+\theta)}\p_x w\|\|J^{(1+a)(4+\theta)}(u_1+u_2)\|\\
&\les \|J^{5+\theta+a(4+\theta)} w\|(\|J^{(1+a)(4+\theta)}u_1\|+\|J^{(1+a)(4+\theta)}u_2\|)\\
&\les M^2.
\end{split}
\end{equation}

As in \eqref{j1} we can find

\begin{equation}
\begin{split}\label{j4}
F_3\les M^2.
\end{split}
\end{equation}

By Sobolev's embedding and Lemma \ref{interx} (with $\alpha=b=2+\theta$)

\begin{equation}
\begin{split}\label{j2}
F_2&\les \ \||x|^{1+\theta}(u_1+u_2)\|_{L^\infty_x}\||x|^3\p_x w\|\\
   &\les \ \|J(\lanx^{1+\theta}(u_1+u_2))\|\|J(\lanx^{3}w)\|\\
&\les \ (\|J^{2+\theta}(u_1+u_2)\|+\|\lanx^{2+\theta}(u_1+u_2)\|)(\|J^{4}w\|+\|\lanx^{4}w\|)\\
&\les \ M^2.
\end{split}
\end{equation}
Also, from Sobolev's embedding and Lemma \ref{interx}
\begin{equation}
\begin{split}\label{j3}
F_4\leq& \ \||x|^\theta \p_x(u_1+u_2)\|_{L^\infty_x}\|x^{4} w\|\\
\les& \ \|J^2(\lanx^\theta (u_1+u_2))\|\|x^4 w\|\\
\les& \  (\|J^{4}(u_1+u_2)\|+\|\lanx^{2\theta}(u_1+u_2)\|)\|x^4 w\|\\
\les& \ M^2.
\end{split}
\end{equation}

From \eqref{lf}--\eqref{j3}

\begin{equation}
\begin{split}\label{j5}
\||x|^{4+\theta}U(t-\tau)z(\tau)\|\les M^2.
\end{split}
\end{equation}

Therefore, gathering together \eqref{intxi}, \eqref{sigma} and \eqref{j5} it follows that for all $t\in [0,T]$

\begin{equation}
\begin{split}
\||x|^{4+\theta}w(t)\|&\les \|J^{(1+a)(4+\theta)}\sigma\|+\||x|^{4+\theta}\sigma\| +\int_0^tM^2 d\tau\\
&\les  \|J^{(1+a)(4+\theta)}\phi\|+\|J^{(1+a)(4+\theta)}\varphi\|+\||x|^{4+\theta}\phi\|+\||x|^{4+\theta}\varphi\|+tM^2,
\end{split}
\end{equation}


which ends the proof of case $r=4$.

In the next, we consider the case $r=5$. First we assume that $\theta>0$. Let $u_1$ and $u_2$ be the solutions of the IVP \eqref{gbo}, with initial data $\phi$ and $\varphi$, respectively. From Theorem A it follows that there exists $T>0$ such that $u_1,u_2\in C([-T,T];Z_{s,5/2+a^{-}})$ and by using Theorem \ref{no} we see that $w=u_1-u_2\in C([-T,T];Z_{s,4+\theta})$.

Hence

\begin{equation}\label{M1}
M_1:=\sup_{[-T,T]}\{\|u_1(t)\|_{Z_{s,5/2}}+\|u_2(t)\|_{Z_{s,5/2}}+\|w(t)\|_{Z_{s,4+\theta}}\}<\infty.
\end{equation}

From \eqref{int} we obtain, by using Plancherel identity 

\begin{equation}\label{intxi5}
\begin{split}
D_\xi^\theta \p_\xi^5(\widehat{w(t)})=&D_\xi^\theta \p_\xi^5(\psi(\xi,t)\ha)-\int_0^t D_\xi^\theta \p_\xi^5(\psi(\xi,t-\tau)\hat{z}(\tau))d\tau.
\end{split}
\end{equation} 
The identity \eqref{F6}, with $\sigma$ instead of $f$, allow us to write

\begin{equation}
\begin{split}\label{dtheta5}
\||x|^{5+\theta}U(t)\sigma\|\les & \ \|\D(\psi |\xi|^{a-3} \ha)\|+\|\D(\psi  |\xi|^{2a-1}\s\ha)\|+\|\D(\psi |\xi|^{3a+1} \ha)\|+\\
&+\|\D(\psi |\xi|^{4a+3}\s \ha)\|+\|\D(\psi |\xi|^{4a+2} \s \ha)\|+\|\D(\psi |\xi|^{5(a+1)} \ha)\|+\\
&+\|\D(\psi |\xi|^{a-2} \s \p_\xi \ha)\|+\|\D(\psi |\xi|^{2a} \p_\xi\ha)\|+\|\D(\psi |\xi|^{3a+2} \s \p_\xi \ha)\|+\\
&+\|\D(\psi |\xi|^{4(a+1)} \p_\xi \ha)\|+\|\D(\psi |\xi|^{a-1} \p_\xi^2 \ha)\|+\|\D(\psi |\xi|^{2a+1} \s \p_\xi^2 \ha)\|+\\
&+\|\D(\psi |\xi|^{3(a+1)}\p_\xi^2 \ha)\|+\|\D(\psi |\xi|^{a} \s \p_\xi^3 \ha)\|+\|\D(\psi |\xi|^{2(a+1)}\p_\xi^3 \ha)\|+\\
&+\|\D(\psi |\xi|^{a+1}\p_\xi^4 \ha)\|+\|\D(\psi \p_\xi^5 \ha)\|\\
:=& \ \tilde B_1+\cdots+\tilde B_{18}.
\end{split}
\end{equation}

In the last inequality, we will estimate only the terms that present more difficulties.

Using Lemma \eqref{DF} we see that

\begin{equation}\label{B1}
\tilde B_1\les \ \|J^{(1+a)\theta}D^{a-3}\sigma\|+\||x|^\theta D^{a-3}\sigma\|:=\tilde B_{1,1}+\tilde B_{1,2}.
\end{equation}

Since the definition of $\chi$ in \eqref{chi}

\begin{equation}
\begin{split}\label{splite25}
\tilde B_{1,2} \les &\|D^{\theta}_\xi( |\xi|^{a-3}\chi \ha)\|+\|\D( |\xi|^{a-3}(1-\chi)\ha)\|:= \tilde B_{1,2}^1+ \tilde B_{1,2}^2.
\end{split}
\end{equation}

In view of identity \eqref{taylor} we can write

\begin{equation}
\begin{split}\label{B121}
\tilde B_{1,2}^1=&\Big\|D^{\theta}_\xi\big(|\xi|^{a-3} \chi \int_0^\xi (\xi-\zeta)\partial_\xi ^2 \hat{\sigma}(\zeta)d\zeta \big)\Big\|\\
\les& \ \Big\|D^{\theta}_\xi \big(|\xi|^{a-3} \chi \int_0^\xi (\xi-\zeta)(\partial_\xi ^2 \hat{\sigma}(\zeta)-\p_\xi^2 \hat{\sigma}(0))d\zeta \big)\Big\|+\Big\|D^{\theta}_\xi(|\xi|^{a-3} \chi \int_0^\xi (\xi-\zeta)\p_\xi^2 \hat{\sigma}(0)d\zeta )\Big\|\\
=&\Big\|D^{\theta}_\xi \big(\underbrace{|\xi|^{a-3} \chi \int_0^\xi (\xi-\zeta)\int_0^\zeta \p_\xi^3 \hat{\sigma}(\eta)d\eta d\zeta}_{S} \big)\Big\|+\frac12\Big\|D^{\theta}_\xi(|\xi|^{a-3} \chi \xi^2 \p_\xi^2 \hat{\sigma}(0))\Big\|\\
\les & \|\D S\|+\|\p_\xi^2 \ha\|_{L^\infty_\xi}\|\D (|\xi|^{a-1} \chi)\|.
\end{split}
\end{equation}

To deal with the right-hand side of the last inequality we observe that by Proposition \ref{DsteinL3}, with $\gamma=a-1/2$ and $\epsilon=a-1/2-\theta$ we obtain $\D (|\xi|^{a-1} \chi)\in L^2$.

We also have, by using Sobolev embedding

\begin{equation}
\begin{split}
\|S\|&\les \Big \| |\xi|^{a-3}\chi \int_0^\xi (\xi-\zeta)\int_0^\zeta|\p_\xi^3 \hat{\sigma}(\eta)|d\eta d\zeta\Big\|\\
&\les \|\p_\xi^3 \ha\|_{L^\infty_\xi}\||\xi|^{a-3}\chi |\xi|^3\|\\
&\les \|\lanx^{5+\theta} \sigma\|\| |\xi|^{a}\chi\|\\
&\les \|\lanx^{5+\theta} \sigma\|,
\end{split}
\end{equation}
and 
\begin{equation}
\begin{split}
\|\p_\xi S\|\les& \ \Big\| |\xi|^{a-4}\chi \int_0^\xi (\xi-\zeta)\int_0^\zeta\p_\xi^3 \hat{\sigma}(\eta)d\eta d\zeta\Big\|+ \Big \| |\xi|^{a-3}\p_\xi\chi \int_0^\xi (\xi-\zeta)\int_0^\zeta\p_\xi^3 \hat{\sigma}(\eta)d\eta d\zeta\Big\|\\
&+\Big \| |\xi|^{a-3}\chi \int_0^\xi \int_0^\zeta\p_\xi^3 \hat{\sigma}(\eta)d\eta d\zeta\Big\|\\\\
&\les \|\p_\xi^3 \ha\|_{L^\infty_\xi}(\||\xi|^{a-4}\chi |\xi|^3\|+\||\xi|^{a-3}\p_\xi \chi |\xi|^3\|+\||\xi|^{a-3}\chi \xi^2\|)\\
&\les \|\lanx^{5+\theta} \sigma\|.
\end{split}
\end{equation}

Hence, from the interpolation inequality $\|\D S\|\les \|S\|^{1-\theta}\|\p_\xi S\|^{\theta}$ and \eqref{B121} it follows that
\begin{equation}
\tilde B_{1,1}^2\les \|\lanx^{5+\theta} \sigma\|.
\end{equation}

In a similar way to the term $E_{1,2}^2$ in \eqref{splite2} we can get
\begin{equation}\label{B122}
\tilde B_{1,2}^2 \les \|J^2 \sigma\|+\|\lanx^2 \sigma\|.
\end{equation}
 
Also, by writing 

\begin{equation*}
\tilde B_{1,1}\les \|J^3 D^{a-3}\sigma\|\les \|J^a \sigma\|+\|D^{a-3}\sigma\|,
\end{equation*} 
we see that to estimate $B_ {1,1}$ it can proceed by analogous way to the term $B_ {1,2}$.
 Thus, by the above inequalities
 \begin{equation}\label{B1est}
 \tilde B_1\les \|\lanx^{5+\theta}\sigma\|.
 \end{equation}
 
The Lemma \ref{DF} implies that

\begin{equation}
\tilde B_7 \les \|J^{(1+a)\theta}D^{a-2}(x\sigma)\|+\||x|^\theta D^{a-2}\h(x\sigma)\|:=\tilde B_{7,1}+\tilde B_{7,2}.
\end{equation}

Thus, using identity \eqref{taylor2} we can write

\begin{equation}
\begin{split}
\tilde B_{7,2}\les& \ \|\D(|\xi|^{a-2}\s \chi \xi \p_\xi^2 \ha(0))\|+\Big\|\D \Big(\underbrace{|\xi|^{a-2}\s \chi \int_0^\xi (\xi-\zeta)\p_\xi^3 \ha(\zeta)d\zeta}_{T}\Big)\Big\|\\
\les& \ \|\p_\xi^2 \ha\|_{L^\infty_\xi}\|\D(|\xi|^{a-1}\chi)\|+\|T\|+\|\p_\xi T\|\\
\les& \ \|\p_\xi^2 \ha\|_{L^\infty_\xi}\|\D(|\xi|^{a-1}\chi)\|+\|\p_\xi^3 \ha\|_{L^\infty_\xi}\big(\||\xi|^{a-2}\chi \xi^2\|+\||\xi|^{a-3}\chi \xi^2\|+\||\xi|^{a-2}\xi^2 \p_\xi \chi\|+\\
&+\||\xi|^{a-2}\xi \chi\|\big)\\
\les& \|\lanx^{5+\theta}\sigma\|+\|\lanx^{5+\theta}\sigma\|\big(\||\xi|^{a-1}\chi\|+\||\xi|^{a}\chi\|+\||\xi|^a \p_\xi \chi\|\big)\\
\les& \|\lanx^{5+\theta}\sigma\|,
\end{split}
\end{equation}
where the first term on the right-hand side of the inequality above already was treated in \eqref{B121}. About the second term, it was used the interpolation inequality $\|\D T\|\les \|T\|^{1-\theta}\|\p_\xi T\|^{\theta}$ and Young's inequality.

The estimate for the term $\tilde B_{7,1}$ follows in a similar way to the term $\tilde B_{7,2}$. Hence, we can get

\begin{equation}
\tilde B_7 \les \ \|\lanx^{5+\theta}\sigma\|.
\end{equation}

With respect to term $\tilde B_8$, from analogous to above terms we can write

\begin{equation}
\tilde B_8 \les \|J^{(1+a)\theta}D^{a-1}(x^2 \sigma)\|+\||x|^\theta D^{a-1}(x^2 \sigma)\|:=\tilde B_{8,1}+\tilde B_{8,2}.
\end{equation}

Then,

\begin{equation}
\begin{split}\label{}
\tilde B_{8,2} \les \ &\|D^{\theta}_\xi( |\xi|^{a-1}\chi \p_\xi^2\ha)\|+\|\D( |\xi|^{a-1}(1-\chi)\p_\xi^2\ha)\|:=\tilde B_{8,2}^1+\tilde B_{8,2}^2.
\end{split}
\end{equation}

Since 

\begin{equation}
\p_\xi^2 \ha(\xi)-\p_\xi^2 \ha(0)=\int_0^\xi \p_\xi^3 \ha(\zeta)d\zeta,
\end{equation}

it follows that

\begin{equation}
\begin{split}
\tilde B_{8,2}^1 & \les \ \Big\|\D \Big(\underbrace{|\xi|^{a-1}\chi \int_0^\xi \p_\xi^3 \ha(\zeta)d\zeta}_{V}\Big)\Big\|+\|\D(|\xi|^{a-1}\chi \p_\xi^2\ha(0))\|\\
& \les \ \|V\|+\|\p_\xi V\|+\|\p_\xi^2 \ha\|_{L^\infty_\xi}\|\D(|\xi|^{a-1}\chi)\|\\
& \les \|\p_\xi^3 \ha\|_{L^\infty_\xi}(\||\xi|^{a-1}\chi\|+\||\xi|^{a-2}\chi|\xi|\|+\||\xi|^{a-1}\p_\xi\chi |\xi|\|)+\|\p_\xi^2 \ha\|_{L^\infty_\xi}\|\D(|\xi|^{a-1}\chi)\|\\
& \les \|\lanx^{5+\theta}\sigma\|.
\end{split}
\end{equation}

In analogous way to term $E_{6,2}^2$ in \eqref{estE622} we see that

\begin{equation}
\tilde B_{8,2}^2 \les \ \|\lanx^{5+\theta}\sigma\|.
\end{equation}

Also, proceeding in a similar way to the term $\tilde B_{8,2}$ we conclude that

\begin{equation}
\tilde B_{8,1} \les \ \|\lanx^{5+\theta}\sigma\|.
\end{equation}

Hence, by the inequalities above

\begin{equation}
\tilde B_8 \les \|\lanx^{5+\theta}\sigma\|.
\end{equation}

The rest of the other terms in \eqref{dtheta5} can be treated in a similar way to the above. Thus

\begin{equation}
\|\tilde B_k\|\les \|J^{(1+a)\theta}\sigma\|+\|\lanx^{5+\theta}\sigma\|, \quad \mbox{for all} \quad k\neq 1,7,8.
\end{equation}

Then, the above estimates imply that

\begin{equation}
\|\lanx^{5+\theta}U(t)\sigma\|\les \|J^{(1+a)\theta}\sigma\|+\|\lanx^{5+\theta}\sigma\|.
\end{equation} 

With respect to the integral part in \eqref{intxi5}, by \eqref{lz}, \eqref{l1z} and the last inequality we obtain

\begin{equation}
\begin{split}\label{lf5}
\||x|^{5+\theta}U(t-\tau)z(\tau)\|\les& \ \|J^{(1+a)(5+\theta)}z(\tau)\|+\||x|^{5+\theta}z(\tau)\|\\
\les& \ \|J^{(1+a)(5+\theta)}(\p_x w(u_1+u_2))\|+\||x|^{5+\theta}\p_x w(u_1+u_2)\|+\\
&+ \|J^{(1+a)(5+\theta)}(w\p_x(u_1+u_2))\|+\||x|^{5+\theta}w\p_x(u_1+u_2)\|\\
&:=H_1+\cdots+ H_4.
\end{split}
\end{equation}

Proceeding as in \eqref{j1} and \eqref{j4} we see that

\begin{equation}
H_1 \les M_1^2 \quad \mbox{and} \quad H_3 \les M_1^2.
\end{equation}

The Sobolev's embedding and Lemma \ref{interx} (with $\alpha=b=4+\theta$) imply that

\begin{equation}
\begin{split}\label{j25}
H_2&\les  \|x^{2}(u_1+u_2)\|_{L^\infty_x}\||x|^{3+\theta}\p_x w\|\\
   &\les  \|J(\lanx^{2}(u_1+u_2))\|\|J(\lanx^{3+\theta}w)\|\\
&\les  (\|J^{5}(u_1+u_2)\|+\|\lanx^{5/2}(u_1+u_2)\|)(\|J^{4+\theta}w\|+\|\lanx^{4+\theta}w\|)\\
&\les  M^2_1.
\end{split}
\end{equation}

Also, by similar way to above

\begin{equation}
\begin{split}
H_4 \les& \ \||x|^{4+\theta}w\|\|x\p_x(u_1+u_2)\|\\
\les& \ \||x|^{4+\theta}w\|\|J(\lanx(u_1+u_2))\|\\
\les& \ \||x|^{4+\theta}w\| (\|J^2(u_1+u_2)\|+\|\lanx^2(u_1+u_2)\|)\\
\les& \ M_1^2.
\end{split}
\end{equation}

From now, proceeding as at the end of the proof of Theorem \ref{no} we obtain the desired result.

The case $\theta=0$ follows by similar arguments as the above.

This finishes the proof of Theorem \ref{no}.

\end{proof}

\section{Proof of Theorems \ref{2t} and \ref{nolow}}  \label{nolow2t}

In arguments of the next proof, we make use of the conservation quantity \eqref{norm}, beyond the several auxiliary results established in the Section \ref{notation}. 

\begin{proof}[Proof of Theorem \ref{2t}]

Since $\phi\in \dot{Z}_{s,(\frac92+a)-}$ it follows that $u\in C([-T,T]; Z_{s,(\frac72+a)^-})$.

First we will deal with the case $r=4$.

Let $\theta\in (0,1)$, then from our hypothesis $\theta<1/2+a$, the constant

\begin{equation}
N_1:=\sup_{[-T,T]} \|u(t)\|_{Z_{s,3+\theta}}
\end{equation}  

is finite.

Here, we looking at \eqref{F5} with $\phi$ instead of $f$. Next, will deal with the corresponding terms $A_1(t,\xi,\hu)$ and $A_6(t,\xi,\hu)$. 

To deal with $A_1$ we can write

\begin{equation}\label{dA11}
|\xi|^{a-2}\s \hu \psi=|\xi|^{a-2}\s \hu (1-\chi)\psi+|\xi|^{a-2}\s \hu\chi\psi:=A_{1,1}+A_{1,2},
\end{equation}
where $\psi$ is given in \eqref{psidef}.

The second term on the right-hand side of the last equality can be decomposed as

\begin{equation}\label{dA121}
A_{1,2}=|\xi|^{a-2}\s \hu \chi(\psi-1)+|\xi|^{a-2}\s \hu\chi:=A_{1,2}^1+A_{1,2}^2.
\end{equation}

Since $\hu(0)=0$, it follows that

\begin{equation}\label{taylor2t}
\hu(\xi)=\xi\p_\xi \hu(0)+\int_0^\xi (\xi-\zeta)\partial_\xi ^2 \hu(\zeta)d\zeta.
\end{equation}

Hence, from $\partial_\xi ^2 \hu(\zeta)-\p_\xi^2\hu(0)=\int_0^\zeta \p_\xi^3 \hu(\eta)d\eta$, we obtain

\begin{equation}
\begin{split}\label{dA122}
A_{1,2}^2=& \ |\xi|^{a-1}\p_\xi \hu(0)\chi+|\xi|^{a-2}\s \chi\int_0^\xi (\xi-\zeta)\partial_\xi ^2 \hu(\zeta)d\zeta\\
=& \ |\xi|^{a-1}\p_\xi \hu(0)\chi+|\xi|^{a-2}\s\chi\int_0^\xi (\xi-\zeta)\int_0^\zeta \partial_\xi ^3 \hu(\eta)d\eta d\zeta+\\
&+|\xi|^{a-2}\s \chi \partial_\xi ^2 \hu(0)\int_0^\xi (\xi-\zeta)d\zeta\\
=& \ |\xi|^{a-1}\p_\xi \hu(0)\chi+L+\frac12\p_\xi^2\hu(0)|\xi|^a \s\chi,
\end{split}
\end{equation}

where the term $L$ is given by \eqref{E121est}.

To deal with $A_6$ we see that

\begin{equation}\label{dA611}
|\xi|^{a-1} \p_\xi \hu \psi=|\xi|^{a-1}\p_\xi \hu (1-\chi)\psi+|\xi|^{a-1} \p_\xi\hu\chi\psi:=A_{6,1}+A_{6,2}.
\end{equation}
Thus,
\begin{equation}\label{dA621}
A_{6,2}=|\xi|^{a-1}\p_\xi\hu \chi(\psi-1)+|\xi|^{a-1}\p_\xi\hu\chi:=A_{6,2}^1+A_{6,2}^2.
\end{equation}

From $\p_\xi \hu(\xi)-\p_\xi \hu(0)=\int_0^\xi \p_\xi^2 \hu(\zeta)d\zeta$, it follows that

\begin{equation}
\begin{split}\label{dA622}
A_{6,2}^2=& \ |\xi|^{a-1}(\p_\xi \hu(\xi)-\p_\xi \hu(0))\chi+|\xi|^{a-1}\p_\xi\hu(0)\chi\\
        =& \ |\xi|^{a-1}\chi\int_0^\xi\p_\xi^2 \hu(\zeta)d\zeta +|\xi|^{a-1}\p_\xi\hu(0)\chi\\
        =& \ \underbrace{|\xi|^{a-1}\chi \int_0^\xi \int_0^\zeta \p_\xi^3 \hu(\eta)d\eta}_{\tilde A}+\p_\xi^2 \hu(0)|\xi|^a \s \chi +|\xi|^{a-1}\p_\xi\hu(0)\chi.
\end{split}
\end{equation}

Hence, from the above equalities and \eqref{F5} we can write

\begin{equation}
\begin{split}\label{A1A6}
A_1+A_6=& \ tc_a |\xi|^{a-1}\p_\xi\hu(0)\chi+t\Big[c_1 L+ \Big(\frac{c_1}{2}+c_6\Big)\p_\xi^2 \hu(0)|\xi|^a \s \chi\\
&+ c_6 \tilde A+c_1 A_{1,1}+c_1 A_{1,2}^1+c_6 A_{6,1}+c_6 A_{6,2}^1\Big]\\
:=& \ tc_a|\xi|^{a-1}\p_\xi\hu(0)\chi+P,
\end{split}
\end{equation}

where $c_1=-2(2+a)(a^2-1)ai$, $c_6=-(2+a)(1+a)ai$ and $c_a=c_1+c_6$.

Proceeding in a similar way to the proof of Theorem \ref{no} we conclude that $P=P(t,\xi,\hu)$ satisfies $\D P \in L^2$.

The identity \eqref{norm} implies that

\begin{equation}\label{pxikappa}
\p_\xi \widehat \vartheta (\tau,0)=\frac{i}{2}\widehat{u^2}(\tau,0)=\frac{i}{2}\|\phi\|^2,
\end{equation}
where $\vartheta$ is given in \eqref{inte}.

Hence, using \eqref{F5}, \eqref{A1A6} together with the integral equation \eqref{inte}

\begin{equation}\label{d4u}
\begin{split}
\p_\xi^4\widehat{u}(t)=& \ c_a |\xi|^{a-1}\chi \Bigg(\underbrace{-t\p_\xi\hu(0)+\frac{i}{2}\int_0^t (t-\tau)\|\phi\|^2d\tau}_{F(t)}\Bigg)+ P(t,\xi,\hu)-\int_0^t  P(t-\tau,\xi,\widehat{\vartheta}(\tau))d\tau\\
&+ \sum_{\overset{1<j\leq 11}{j\not=6}} A_j(t,\xi, \hu)-\sum_{\overset{1<j \leq 11}{j\not=6}}\int_0^t A_j(t-\tau,\xi, \widehat{\vartheta}(\tau)).
\end{split}
\end{equation}

A simple computation gives us 

\begin{equation*}
\begin{split}
F(t)= \ it\Big(\int_{-\infty}^{\infty} x \phi(x)dx+\frac{\|\phi\|^2}{4}t\Big).
\end{split}
\end{equation*}
Thus, if 

$$t=t^{*}=\frac{-4}{\|\phi\|^2}\int_{-\infty}^{\infty}x\phi(x)dx,$$

then 
\begin{equation}\label{Func}
F(t^{\ast})=0.
\end{equation}

As in the proof of Theorem \ref{no} we conclude that

\begin{equation}
\|\D P(t,\xi,\hu)\| \les \|J^{(4+\theta)(1+a)}\phi\|+\|\lanx^{4+\theta}\phi\|
\end{equation}
and
\begin{equation}
\|\D A_j(t,\xi,\hu)\| \les \|J^{(4+\theta)(1+a)}\phi\|+\|\lanx^{4+\theta}\phi\|, \quad \mbox{for} \quad 1<j\leq 11, \ j\not=6.
\end{equation}

Also, from Sobolev's embedding and Lemma \ref{interx} (with $\alpha=b=3+\theta$)

\begin{equation}
\begin{split}\label{DPu}
\|\D P(t-\tau,\xi,\widehat{\vartheta}(\tau))\|& \les \ \|J^{(4+\theta)(1+a)}(uu_x)\|+\|\lanx^{4+\theta}uu_x\|\\
&\les \|J^{(4+\theta)(1+a)+1}u^2(\tau)\|+\|\lanx^2 u\|_{L^\infty}\|\lanx^{2+\theta}u_x\|\\
&\les \|J^{(4+\theta)(1+a)+1}u(\tau)\|^2+\|J(\lanx^{2+\theta}u)\|\big(\|J(\lanx^{2+\theta}u)\|+\|\lanx^{1+\theta}u\|\big)\\
&\les \|J^{(4+\theta)(1+a)+1}u(\tau)\|^2 +\|J^{3+\theta}u\|^2+\|\lanx^{3+\theta}u\|^2\\
&\les N_1^2,
\end{split}
\end{equation}

and, as above

\begin{equation}\label{DAju}
\|\D A_j(t-\tau,\xi, \widehat{\vartheta}(\tau))\|\les N_1^2, \quad \mbox{for} \quad 1<j\leq 11, \ j\not=6.
\end{equation}

By \eqref{DPu} and \eqref{DAju}

\begin{equation}\label{Dintu}
\Big\|\int_0^t  \D P(t-\tau,\xi,\widehat{\vartheta}(\tau))d\tau\Big\|\les |t|N_1^2 \quad \mbox{and} \quad \Big\|\int_0^t  \D A_j(t-\tau,\xi, \widehat{\vartheta}(\tau))\Big\|\les |t|N_1^2,
\end{equation}

where $1<j\leq 11$, $j\not=6$.

Therefore, taking $\D$ in both sides of \eqref{d4u} and using \eqref{Func}--\eqref{Dintu} we obtain

\begin{equation}
u(t^{\ast})\in Z_{s,4+\theta}.
\end{equation}

This ends the case $r=4$.

Next, we will let's attention to the case $r=5$. 

In view of $5+\theta<9/2+a$, we see that $\theta<a-1/2$. Note that here, we must have $1/2<a<1$. We also have that the constant 

\begin{equation}
N_2:=\sup_{[-T,T]} \|u(t)\|_{Z_{s,4+\theta}}
\end{equation}  

is finite.

Now, we will employ \eqref{F6} with $\phi$ instead of $f$. We will deal with the terms $B_1(t,\xi,\hu)$ and $B_7(t,\xi,\hu)$.

To estimate $B_1$ we can write

\begin{equation}\label{dB11}
|\xi|^{a-3} \hu \psi=|\xi|^{a-3} \hu (1-\chi)\psi+|\xi|^{a-3} \hu\chi\psi:=B_{1,1}+B_{1,2}.
\end{equation}

Thus,

\begin{equation}\label{dB121}
B_{1,2}=|\xi|^{a-3}\hu \chi(\psi-1)+|\xi|^{a-3}\hu\chi:=B_{1,2}^1+B_{1,2}^2.
\end{equation}

Using \eqref{taylor2t} we obtain

\begin{equation}
\begin{split}\label{dA122}
B_{1,2}^2=& \ |\xi|^{a-2}\s\p_\xi \hu(0)\chi+|\xi|^{a-3} \chi\int_0^\xi (\xi-\zeta)\partial_\xi ^2 \hu(\zeta)d\zeta\\
=& \  |\xi|^{a-2}\s\p_\xi \hu(0)\chi+|\xi|^{a-3}\chi\int_0^\xi (\xi-\zeta)\int_0^\zeta \partial_\xi ^3 \hu(\eta)d\eta d\zeta+|\xi|^{a-3} \chi \partial_\xi ^2 \hu(0)\int_0^\xi (\xi-\zeta)d\zeta\\
=& \ |\xi|^{a-2}\s\p_\xi \hu(0)\chi+S+\frac12\p_\xi^2\hu(0)|\xi|^{a-1} \chi,
\end{split}
\end{equation}

where $S$ is given by \eqref{B121}.

For $B_7$ we can write

\begin{equation}\label{dB711}
|\xi|^{a-2} \s \p_\xi \hu \psi=|\xi|^{a-2}\s \p_\xi \hu (1-\chi)\psi+|\xi|^{a-2}\s \p_\xi\hu\chi\psi:= B_{7,1}+B_{7,2}.
\end{equation}

Then,
\begin{equation}\label{dB721}
B_{7,2}=|\xi|^{a-2}\s\p_\xi\hu \chi(\psi-1)+|\xi|^{a-2}\s \p_\xi\hu\chi:=B_{7,2}^1+B_{7,2}^2.
\end{equation}

Since $\p_\xi \hu(\xi)-\p_\xi \hu(0)=\int_0^\xi \p_\xi^2 \hu(\zeta)d\zeta$, we see that

\begin{equation}
\begin{split}\label{dB722}
B_{7,2}^2=& \ |\xi|^{a-2}\s(\p_\xi \hu(\xi)-\p_\xi \hu(0))\chi+|\xi|^{a-2}\s\p_\xi\hu(0)\chi\\
        =& \ |\xi|^{a-2}\s\chi\int_0^\xi\p_\xi^2 \hu(\zeta)d\zeta +|\xi|^{a-2}\s\p_\xi\hu(0)\chi\\
        =& \ \underbrace{|\xi|^{a-2}\s\chi \int_0^\xi \int_0^\zeta \p_\xi^3 \hu(\eta)d\eta d\zeta}_{\tilde B}+\p_\xi^2 \hu(0)|\xi|^{a-1}\chi +|\xi|^{a-2}\s\p_\xi\hu(0)\chi.
\end{split}
\end{equation}

Then, by the above equalities and \eqref{F6} we have

\begin{equation}
\begin{split}\label{B1B7}
B_1+B_7=& \ td_a |\xi|^{a-2}\s\p_\xi\hu(0)\chi+t\Big[d_1 S+ \Big(\frac{d_1}{2}+d_7\Big)\p_\xi \hu(0)|\xi|^{a-1}\chi\\
&+ d_7 \tilde B+d_1 B_{1,1}+d_1 B_{1,2}^1+d_7 B_{7,1}+d_7 B_{7,2}^1\Big]\\
:=& \ td_a|\xi|^{a-2}\s\p_\xi\hu(0)\chi+Q,
\end{split}
\end{equation}

where $d_1=-(a^2-4)(a^2-1)ai$, $d_7=-10(2+a)(a^2-1)ai$ and $d_a=d_1+d_7$.

Thus, as in the proof of Theorem \ref{no} follows that $\D Q \in L^2$.

By \eqref{F6}, \eqref{B1B7} and \eqref{pxikappa}

\begin{equation}\label{d5u}
\begin{split}
\p_\xi^5\widehat{u}(t)=& \ d_a |\xi|^{a-2}\s\chi \Bigg(\underbrace{-t\p_\xi\hu(0)+\frac{i}{2}\int_0^t (t-\tau)\|\phi\|^2d\tau}_{F(t)}\Bigg)+ Q(t,\xi,\hu)\\
&-\int_0^t  Q(t-\tau,\xi,\widehat{\vartheta}(\tau))d\tau+ \sum_{\overset{1<j\leq 18}{j\not=7}} B_j(t,\xi, \hu)-\sum_{\overset{1<j \leq 18}{j\not=7}}\int_0^t B_j(t-\tau,\xi, \widehat{\vartheta}(\tau)).
\end{split}
\end{equation}

By the analogous way, to the proof of Theorem \ref{no} we see that

\begin{equation}
\|\D Q(t,\xi,\hu)\| \les \|J^{(5+\theta)(1+a)}\phi\|+\|\lanx^{5+\theta}\phi\|
\end{equation}
and
\begin{equation}
\|\D B_j(t,\xi,\hu)\| \les \|J^{(5+\theta)(1+a)}\phi\|+\|\lanx^{5+\theta}\phi\|, \quad \mbox{for} \quad 1<j\leq 18, \ j\not=7.
\end{equation}

Using Sobolev's embedding and Lemma \ref{interx} (with $\alpha=b=4+\theta$)

\begin{equation}
\begin{split}\label{DQu}
\|\D Q(t-\tau,\xi,\widehat{\vartheta}(\tau))\|& \les \ \|J^{(5+\theta)(1+a)}(uu_x)\|+\|\lanx^{5+\theta}uu_x\|\\
&\les \|J^{(5+\theta)(1+a)+1}u^2(\tau)\|+\|\lanx^2 u\|_{L^\infty}\|\lanx^{3+\theta}u_x\|\\
&\les \|J^{(5+\theta)(1+a)+1}u(\tau)\|^2+\|J(\lanx^{3+\theta}u)\|\big(\|J(\lanx^{3+\theta}u)\|+\|\lanx^{2+\theta}u\|\big)\\
&\les \|J^{(5+\theta)(1+a)+1}u(\tau)\|^2 +\|J^{4+\theta}u\|^2+\|\lanx^{4+\theta}u\|^2\\
&\les N_2^2,
\end{split}
\end{equation}

and

\begin{equation}\label{DBju}
\|\D B_j(t-\tau,\xi, \widehat{\vartheta}(\tau))\|\les N_2^2, \quad \mbox{for} \quad 1<j\leq 18, \ j\not=7.
\end{equation}

The rest of the proof follows as in the case $r=4$.

This concludes the proof.

\end{proof}


Now, we deal with the proof of Theorem \ref{nolow}. With objective to improve the Theorem A, we also obtained the continuous dependence upon initial data in $Z_{s,r}$. 

\begin{proof}[Proof of Theorem \ref{nolow}]



From \eqref{F2}--\eqref{F5}, with $\phi$ instead of $f$, by using similar ideas used to obtain Theorem \ref{no} it follows that for all
$0<r<5/2+a$

\begin{equation}\label{52}
\|\lanx^r U(t)\phi\|\les \|J^{r(a+1)}\phi\|+\|\lanx^r \phi\|, \quad \mbox{if} \quad \phi \in Z_{s,r},
\end{equation}

and for any $5/2+a\leq r<7/2+a $

\begin{equation}\label{72}
\|\lanx^r U(t)\phi\|\les \|J^{r(a+1)}\phi\|+\|\lanx^r \phi\|, \quad \mbox{if} \quad \phi\in \dot{Z}_{s,r}.
\end{equation}

Let $u$ be the solution of the IVP \eqref{gbo} with initial data $\phi$.

First, we will deal with the case $0<r<5/2+a$. Here, we can put $r:=r_k$, where 

\[
r_k= 
  \begin{cases}
      \frac{k\theta+1}{2}, & \mbox{if} \ \  k=1,...,4 \\
      5/2+\frac{a\theta}{2}, & \mbox{if} \ \ k=5\\
      5/2+a\theta, & \mbox{if} \ \ k=6, 
  \end{cases}
\]

for some $\theta\in (0,1)$.

To obtain our result we will use induction in $k$. For $k=1$ the Theorem A (i)) implies that $\lanx^{r_1}u \in C([-T,T];L^2)$. Also assume that $\lanx^{r_{k-1}}u \in C([-T,T];L^2)$, for some $k=2,...,6$. Hence, the constant

\begin{equation}
M_k=\sup_{[-T,T]}\big\{\|J^{r_k (1+a)+1}u\|^2+\|\lanx^{r_{k-1}}u\|+\|J^2 u\|\big\}<\infty.
\end{equation}

We recall that the integral equation associated to the IVP \eqref{gbo} is given by

\begin{equation}\label{In}
u(t)=U(t)\phi-\frac12\int_0^t U(t-\tau)\p_x u^2(\tau) d\tau, \quad  t\in [-T,T].
\end{equation}


Inequality \eqref{52} yields us

\begin{equation}
\begin{split}\label{Uphi}
\|\lanx^{r_k}U(t)\phi\|&\les \|J^{r_{k}(1+a)}\phi\|+\|\lanx^{r_k} \phi\|.\\
\end{split}
\end{equation}

On the other hand,  from \eqref{Uphi}, with $\p_x u^2(\tau)$ instead of $\phi$, assuming that $2\leq k\leq 4$ and using Holder's inequality follows that

\begin{equation}
\begin{split}\label{U}
\|\lanx^{r_k}U(t-\tau)\p_x u^2(\tau)\|&\les \|J^{r_k (1+a)}\p_x u^2(\tau)\|+\|\lanx^{r_k} \p_x u^2(\tau)\|\\
                       &\les \|J^{r_k (1+a)+1}u(\tau)\|^2+\|\lanx^{\theta/2}\p_x u (\tau)\|_{L^\infty_x}\|\lanx^{\frac{(k-1)\theta+1}{2}} u (\tau)\|\\
                       &\les M_k^2+(\|J(\lanx^{\theta/2}u(\tau))\|+\|u(\tau)\|)\|\lanx^{\frac{(k-1)\theta+1}{2}} u (\tau)\|\\
                       &\les M_k^2+(\|J^2 u(\tau)\|+\|\lanx^{\theta}u(\tau)\|)\|\lanx^{r_{k-1}} u (\tau)\|\\
                       &\les M_k^2,
\end{split}
\end{equation}
where above we also used Sobolev embedding and Lemma \ref{interx}. 

Then, by \eqref{In}--\eqref{U}

\begin{equation}\label{rk}
\|\lanx^{r_k}u(t)\|\les \|J^{r_{k}(1+a)}\phi\|+\|\lanx^{r_k} \phi\|+|t|M_k^2, \quad t\in [-T,T].
\end{equation}

From \eqref{rk} and proceeding as in \cite{AP} we can show that $\lanx^{r_k}u \in C([-T,T];L^2)$.

By analogous way it's possible to see that if $\lanx^{r_{k-1}}u \in C([-T,T];L^2)$, then $\lanx^{r_k}u \in C([-T,T];L^2)$, for $5\leq k \leq 6$. 
Therefore, we conclude that $\lanx^{r}u\in C([-T,T];L^2)$, where $0<r<5/2+a$.

The case $5/2+a\leq r <7/2+a$ is similar to the above.

This ends the proof of 1) and 2).

Next, we will deal with the part 3). With the goal to obtain continuous dependence, letting $u,v\in C([-T,T];Z_{s,r})$ solutions of the IVP \eqref{gbo} with initial data $\phi$ and $\varphi$, respectively. 
Then, from the persistence property we deduce $\lanx^{r}(u-v)\in C([-T,T];L^2)$. Thus the constant

\begin{equation}
\tilde M :=\sup_{[-T,T]}\{\|u(t)\|_{Z_{s,r}}+\|v(t)\|_{Z_{s,r}}\}<\infty.
\end{equation}

Our hypothesis implies that $s>3/2$, hence using \eqref{52}, Holder's inequality and Sobolev embedding we get

\begin{equation}
\begin{split}
\|\lanx^r (u-v)\|\leq& \ \|\lanx^r U(t)(\phi-\varphi)\|+\int_0^t \|\lanx^r U(t-\tau)\p_x (u^2-v^2)\|d\tau\\
                 \les& \ \|J^{r(1+a)}(\phi-\varphi)\|+\|\lanx^r (\phi-\varphi)\|+\int_0^t\big(\|J^{s}(u^2-v^2)\|+\|\lanx^r \p_x(u^2-v^2)\|\big)d\tau\\
                 \les& \ \|\phi-\varphi\|_{Z_{s,r}}+T\sup_{[-T,T]}\|u+v\|_{H^s}\sup_{[-T,T]}\|u-v\|_{H^s}+\\
                 &+\int_0^t \big(\|\p_x (u+v)\|_{L^\infty}\|\lanx^r (u-v)\|+\|\lanx^r (u+v)\|\|\p_x(u-v)\|_{L^\infty_x}\big)d\tau\\
                 \les& \ \|\phi-\varphi\|_{Z_{s,r}}+T\tilde M \sup_{[-T,T]}\|u-v\|_{H^s}+\tilde M \int_0^t \|\lanx^r (u-v)\|d\tau.
\end{split}
\end{equation}
Thus, the desired result follows from Gronwall's Lemma and the continuous dependence of the solution in $C([-T,T];H^s)$ with initial data in $H^s$. This shows that the IVP \eqref{gbo} is LWP in $Z_{s,r}$. If $a>1/3$, the global well-posedness upon initial data in $H^s$ can be found in \cite{Guogbo}. Hence, as in the above we conclude the global well-posedness in $Z_{s,r}$.

The proof of 4) follows by analogous way.

This ends the proof.

\end{proof}


\section{Proof of Theorems \ref{low5} and \ref{low5theta}}\label{NBO}

In the following we will stablish our decay results for the Benjamin-Ono equation. Here we use the truncated weights $\lanxN$ defined in the Section \ref{notation}.   

\begin{proof}[Proof of Theorem \ref{low5}]

Let $u_1$ and $u_2$ solutions of the IVP \eqref{gbo} with initial data $\phi$ and $\varphi$, respectively. Here, as in the previous proofs, we set $\sigma=\phi-\varphi$ and  $w=u_1-u_2$. Thus, $w$ satisfies the following linear equation

\begin{equation}\label{bw}
w_{t}+\mathcal{H}\partial_{x}^{2}w+u_1\p_x w+w\p_x u_2 =0, \;\;x,t\in \R.
\end{equation}

The global well-posedness gives us $u_1,u_2\in C([-T,T];H^s)$, for all $T>0$. In addition, from \cite[Theorem 1]{GermanPonce} we see that $w\in L^\infty([-T,T];Z_{s,4})$. Hence, the constant

\begin{equation}
P= \sup_{[-T,T]}\{\|u_1(t)\|_{H^s}+\|u_2(t)\|_{H^s}+\|w(t)\|_{Z_{s,4}}\}<\infty.
\end{equation}

Identities \eqref{I1} and \eqref{I2} imply that the solution $w$ of \eqref{bw} satisfies the following conservation laws

\begin{equation}\label{law1}
\int w(x,t)dx=\int \sigma(x)dx
\end{equation}
and
\begin{equation}\label{law2}
\frac{d}{dt}\int x w(x,t)dx=\frac12(\|\phi\|^2-\|\varphi\|^2).
\end{equation}

Then, from \eqref{law1}, \eqref{law2} and the hypothesis \eqref{norma}--\eqref{v1} it follows that 

\begin{equation}\label{consw}
\int w(x,t)dx=0
\end{equation}

and

\begin{equation}\label{consxw}
\int x w(x,t)dx=0,
\end{equation}

for all $t$ in which the solution there exists.

Multiplying \eqref{bw} by $\lanxN^{2}x^8 w$ and integrating on $\R$ we obtain

\begin{equation}\label{intwn}
\begin{split}
\frac12 \|\lanxN x^4 w\|^2+\int \lanxN x^4\h \p_x^2 w\lanxN x^4 w+\int \lanxN^{2}&x^8(u_1\p_x w+w\p_x u_2)=0.
\end{split}
\end{equation}

Following, our goal is to estimate the second term on the identity above.

The identities

\begin{equation*}
x\h \p_x^2 w=\h \p_x^2(xw)-2\h \p_x w,
\end{equation*} 

\begin{equation*}
x^2\h \p_x^2 w=\h \p_x^2(x^2w)-4\h \p_x (xw)+2\h w,
\end{equation*} 

\begin{equation*}
x^3 \h \p_x^2 w=\h\p_x^2(x^3 w)-6\h \p_x (x^2 w)-2\h (xw)
\end{equation*}

together with \eqref{consxw} gives us

\begin{equation*}
x^4 \h \p_x^2 w=\h\p_x^2(x^4 w)-8\h \p_x (x^3 w)+4\h (x^2w).
\end{equation*}

Thus, 

\begin{equation}
\begin{split}
\lanxN x^4 \h \p_x^2 w&=\lanxN \h\p_x^2(x^4 w)-8\lanxN \h \p_x (x^3 w)+4\lanxN \h (x^2w)\\
&:= \ \mathcal{A}+\mathcal B+\mathcal C.
\end{split}
\end{equation}

To estimate the first term in the last identity we can write

\begin{equation}\label{ma}
\begin{split}
\mathcal A&=[\lanxN;\h]\p_x^2(x^4 w)+\h(\lanxN\p_x^2(x^4 w))\\
&=\  \mathcal A_1 +\h\p_x^2 (\lanxN x^4 w)-2\h (\p_x \lanxN \p_x(x^4 w))-\h(\p_x^2\lanxN x^4 w)\\
&= \ \mathcal A_1+\cdots+\mathcal A_4.
\end{split}
\end{equation}

Thus, by the Calder\'on commutator estimate (see Theorem 6 in \cite{FonsecaPonce} and references therein) we get

\begin{equation}\label{ma1}
\|\mathcal A_1\|\les \|\p_x^2\lanxN \|_{L^\infty}\|x^4 w\|\les P.
\end{equation}

Also, 

\begin{equation}\label{ma4}
\|\mathcal A_4\|\les \|\p_x^2\lanxN x^4 w\|\les \|\p_x^2\lanxN \|_{L^\infty}\|x^4 w\|\les P.
\end{equation}

By returning the term $\mathcal A_2$ in \eqref{intwn} we get

\begin{equation}\label{ma2}
\int \h \p_x^2 (\lanxN x^4w)\lanxN x^4 w=0.
\end{equation}
To estimate $\mathcal A_3$, the inequality $|x\p_x \lanxN|\les \lanxN $ yields us 

\begin{equation}\label{ma3}
\begin{split}
\|\mathcal A_3\| &\les \ \| \p_x \lanxN x^3 w\|+\| \p_x \lanxN x^4 \p_x w\|\\
        &\les \ \|\lanxN x^2 w\|+\underbrace{\|\lanxN x^3 \p_x w\|}_{F}\\
        &\les P+\|J(\lanx^3 \lanxN w)\|\\
                &\les P+\|J^4(\lanxN w)\|+\|\lanx^4 \lanxN w\|\\
                &\les P+\|J^5 w\|+\|\lanxN^5 w\|+\|\lanx^4 \lanxN w\|\\
                &\les P+\|\lanx^4 \lanxN w\|,
\end{split}
\end{equation}
where above we used Lemma \ref{interx} (with $\alpha=b=4$ and $\alpha=b=5$).

To estimate term $\mathcal B$ we can write

\begin{equation}
\begin{split}
\lanxN \h \p_x(x^3w)&=\ [\lanxN; \h]\p_x (x^3 w)+\h (\lanxN \p_x(x^3 w))\\
                   &:= \mathcal B_1+\mathcal B_2.
\end{split}
\end{equation}

Thus, by the Calder\'on commutator estimate 

\begin{equation}\label{mathB1}
\|\mathcal B_1\|\les \|\p_x \lanxN\|_{L^\infty}\|x^3 w\|\les P.
\end{equation}

Also, from \eqref{ma3} 

\begin{equation}
\begin{split}\label{mathB2}
\|\mathcal B_2\|\les \|\lanxN x^2 w\|+\|\lanxN x^3 \p_x w\|
                \les P+\|\lanx^4 \lanxN w\|.
\end{split}
\end{equation}



hypotheses \eqref{x2phi}--\eqref{I3} and identity \eqref{x2w} imply that

\begin{equation}\label{mathC}
\begin{split}
\mathcal C \les \ \|\h (x^2w)\|+\| x \h (x^2w)\|
  \les \ \| x^2 w\|+\| \h (x^3 w)\|
  \les \ \| x^2 w\|+\| x^3 w\| 
  \les \ P.
\end{split}
\end{equation}

Hence, by \eqref{ma}--\eqref{mathC}

\begin{equation}
\mathcal A \les \|x^4 \lanxN^{1+\theta}w\|+P.
\end{equation}

About the nonlinear term in \eqref{intwn} we see that

\begin{equation}
\begin{split}
\int \lanxN^{2}x^8w(u_1\p_x w+w\p_x u_2)=&-\frac12\int \big(\p_x(x^8 \lanxN^{2})u_1+x^8 \lanxN^{2}\p_x u_1\big)w^2+\\
&+\int x^8 \lanxN^{2}w^2 \p_x u_2\\
\les&  \ (\|u_1\|_{L^\infty}+\|\p_x u_2\|_{L^\infty})\|x^4 \lanxN w\|^2+P^2\\
\les& \ P (P+\|x^4 \lanxN w\|^2),
\end{split}
\end{equation}
where above we used integration by parts, Sobolev's embedding and the inequality 
 $$|\p_x(x^8 \lanxN^{2})|\les (1+x^8)\lanxN^{2}.$$
 
By the above inequalities we conclude that

\begin{equation*}
\frac{d}{dt}\|\lanxN x^4 w\|^2 \les (1+P)\|\lanxN x^4 w\|^2+P^2.
\end{equation*}

Thus, by Gronwall's Lemma 

\begin{equation*}
\sup_{[-T,T]}\|\lanxN x^4 w\|\leq c(T).
\end{equation*} 

On the other hand, we obtain $w\in C([-T,T];Z_{s,r})$ by similar ideas presented in \cite{AP}.

This finishes the proof.

\end{proof}

The following result allows us to extend the Theorem D, by assuming an additional decay on initial data of the order $3/2^{-}$.   

\begin{proof}[Proof of Theorem \ref{low5theta}]

Suppose that $\theta\in (0,1/2)$. Let $u_1$ and $u_2$ solutions of the IVP \eqref{gbo} with initial data $\phi$ and $\varphi$, respectively. As before, let $\sigma=\phi-\varphi$ and  $w=u_1-u_2$. Thus, Theorems A and \ref{low5} imply that, for all $T>0$, the constant
\begin{equation}
\tilde P= \sup_{[-T,T]}\{\|u_1(t)\|_{H^s}+\|u_2(t)\|_{H^s}+\|w(t)\|_{Z_{s,5}}\}<\infty.
\end{equation}

In a similar way to the proof of the Theorem \ref{low5} we obtain
\begin{equation}\label{intwnthe}
\begin{split}
\frac12 \|\lanxN^{1+\theta}x^4 w\|^2+\int \lanxN^{1+\theta}x^4\h \p_x^2 w\lanxN^{1+\theta}x^4 w+\int \lanxN^{2+2\theta}&x^8(u_1\p_x w+w\p_x u_2)=0,
\end{split}
\end{equation}

where

\begin{equation}
\begin{split}
\lanxN^{1+\theta}x^4 \h \p_x^2 w&=\lanxN^{1+\theta}\h\p_x^2(x^4 w)-8\lanxN^{1+\theta}\h \p_x (x^3 w)+4\lanxN^{1+\theta}\h (x^2w)\\
&:= \ \tilde {\mathcal{A}}+\tilde {\mathcal B}+\tilde{\mathcal C}.
\end{split}
\end{equation}

Next, we will deal with the terms above.

Then

\begin{equation}\label{mathe}
\begin{split}
\tilde {\mathcal{A}}&=[\lanxN^{1+\theta};\h]\p_x^2(x^4 w)+\h(\lanxN^{1+\theta}\p_x^2(x^4 w))\\
&=\  \tilde{\mathcal A_1} +\h\p_x^2 (\lanxN^{1+\theta}x^4 w)-2\h (\p_x \lanxN^{1+\theta}\p_x(x^4 w))-\h(\p_x^2\lanxN^{1+\theta}x^4 w)\\
&= \ \tilde{\mathcal A_1}+\cdots+\tilde{\mathcal A_4}.
\end{split}
\end{equation}

Thus, by the Calder\'on commutator estimate we conclude

\begin{equation}\label{ma1the}
\|\tilde {\mathcal A_1}\|\les \|\p_x^2\lanxN^{1+\theta}\|_{L^\infty}\|x^4 w\|\les \tilde P.
\end{equation}

Also, 

\begin{equation}\label{ma4the}
\|\tilde{\mathcal A_4}\|\les \|\p_x^2\lanxN^{1+\theta}x^4 w\|\les \|\p_x^2\lanxN^{1+\theta}\|_{L^\infty}\|x^4 w\|\les \tilde P.
\end{equation}

By returning the term $\tilde{\mathcal A_2}$ in \eqref{intwn} we obtain

\begin{equation}\label{ma2the}
\int \h \p_x^2 (\lanxN^{1+\theta}x^4w)\lanxN^{1+\theta}x^4 w=0.
\end{equation}
To estimate $\tilde{\mathcal A_3}$, the inequality $|x\p_x \lanxN|\les \lanxN $ yields us 

\begin{equation}\label{ma3the}
\begin{split}
\|\tilde{\mathcal A_3}\| &\les \ \|\lanxN^\theta \p_x \lanxN x^3 w\|+\|\lanxN^\theta \p_x \lanxN x^4 \p_x w\|\\
        &\les \ \|\lanxN^{1+\theta}x^2 w\|+\underbrace{\|\lanxN^{1+\theta}x^3 \p_x w\|}_{\tilde F}\\
        &\les \ \tilde P+\tilde F.
\end{split}
\end{equation}

Using \cite[Theorem 4]{FonsecaPonce} and  identity $\widehat{\p_x (x^3 w)}(0,t)=0$, the term $\tilde{\mathcal B}$ can be estimated as 

\begin{equation}\label{Bthe}
\begin{split}
\tilde{\mathcal B}&\les \ \|\lanxN^\theta \h \p_x(x^3 w)\|+\|\lanxN^\theta x \h \p_x(x^3 w)\|\\
 &\les \ \|\lanxN^{\theta}\p_x(x^3 w)\|+\|\lanxN^\theta \h(x \p_x(x^3 w))\|\\
 &\les \ \|\lanxN^\theta x^2 w\|+\|\lanxN^\theta x^3 \p_x w\|+\|\lanxN^\theta x^3 w\|+\|\lanxN^\theta x^4 \p_x w\|\\
 &\les \ \tilde P+\underbrace{\|\lanxN^\theta \lanx^{4} \p_x w\|}_{\tilde F}.
\end{split}
\end{equation}

By Lemma \ref{interx} (with $\alpha=5(1+\theta)$ and $b=5$)
\begin{equation}\label{Fthe}
\begin{split}
\tilde F&\les \ \|J(\lanxN^\theta \lanx^4 w)\|+\|x^4 w\|+\|\lanxN^\theta \lanx^3 w\|\\
 &\les \ \|J^{1+\theta}(\lanx^4 w)\|+\|\lanxN^{1+\theta}\lanx^4 w\|\\
 &\les \ \|J^{5(1+\theta)} w\|+\|\lanx^5 w\|+\|\lanxN^{1+\theta}\lanx^4 w\|\\
 &\les \ \tilde P+\|\lanxN^{1+\theta} x^4 w\|.
\end{split}
\end{equation}

Also, the Theorem 4 in \cite{FonsecaPonce}, equality \eqref{x2w} and \eqref{x2phi}--\eqref{I3} yields us

\begin{equation}\label{Cthe}
\begin{split}
\tilde {\mathcal C}& \les \ \|\lanxN^\theta\h (x^2w)\|+\|\lanxN^\theta x \h (x^2w)\|\\
  &\les \ \|\lanxN^\theta x^2 w\|+\|\lanxN^\theta \h (x^3 w)\|\\
  &\les \ \|\lanxN^\theta x^2 w\|+\|\lanxN^\theta x^3 w\| \\
  &\les \ \|\lanxN^{1+\theta}\lanx^4 w\|\\
  &\les  \ \tilde P+\|\lanxN^{1+\theta} x^4 w\|.
\end{split}
\end{equation}

Hence, by \eqref{mathe}--\eqref{Cthe}

\begin{equation}
\tilde{\mathcal A} \les \|x^4 \lanxN^{1+\theta}w\|+\tilde P.
\end{equation}

About the nonlinear term in \eqref{intwnthe} we see that

\begin{equation}
\begin{split}
\int \lanxN^{2+2\theta}x^8w(u_1\p_x w+w\p_x u_2)=&-\frac12\int \big(\p_x(x^8 \lanxN^{2+2\theta})u_1+x^8 \lanxN^{2+2\theta}\p_x u_1\big)w^2+\\
&+\int x^8 \lanxN^{2+2\theta}w^2 \p_x u_2\\
\les&  \ (\|u_1\|_{L^\infty}+\|\p_x u_2\|_{L^\infty})\|x^4 \lanxN^{1+\theta}w\|^2+\tilde {P}^2\\
\les& \ \tilde P (\tilde P+\|x^4 \lanxN^{1+\theta}w\|^2),
\end{split}
\end{equation}
where above we used integration by parts, Sobolev's embedding and the inequality 
 $$|\p_x(x^8 \lanxN^{2+2\theta})|\les (1+x^8)\lanxN^{2+2\theta}.$$
 
By the above inequalities we conclude that

\begin{equation*}
\frac{d}{dt}\|\lanxN^{1+\theta}x^4 w\|^2 \les (1+\tilde P)\|\lanxN^{1+\theta}x^4 w\|^2+\tilde {P}^2.
\end{equation*}

Thus, Gronwall's Lemma yields us

\begin{equation*}
\sup_{[-T,T]}\|\lanxN^{1+\theta}x^4 w\|\leq \tilde c(T).
\end{equation*} 

\end{proof}



\end{document}